\documentclass[10pt, leqno]{article}

\usepackage{amsmath,amssymb,amsthm, epsfig}
\usepackage[colorlinks=true,urlcolor=blue,
citecolor=red,linkcolor=blue,linktocpage,pdfpagelabels,
bookmarksnumbered,bookmarksopen]{hyperref}
\usepackage{color}
\usepackage{ulem}
\usepackage{dsfont}
\usepackage{mathrsfs}
\usepackage{mathtools}
\usepackage{stackrel}
\usepackage{esint,enumerate}

\usepackage[left=2.5cm,right=2.5cm,top=2.5cm,bottom=2.5cm]{geometry}

\usepackage{comment}

\newcommand{\intav}[1]{\mathchoice {\mathop{\vrule width 6pt height 3 pt depth  -2.5pt
\kern -8pt \intop}\nolimits_{\kern -6pt#1}} {\mathop{\vrule width
5pt height 3  pt depth -2.6pt \kern -6pt \intop}\nolimits_{#1}}
{\mathop{\vrule width 5pt height 3 pt depth -2.6pt \kern -6pt
\intop}\nolimits_{#1}} {\mathop{\vrule width 5pt height 3 pt depth
-2.6pt \kern -6pt \intop}\nolimits_{#1}}}

\newtheorem{proposition}{Proposition}[section]
\newtheorem{lemma}{Lemma}[section]
\newtheorem{theorem}{Theorem}[section]
\newtheorem{remark}{Remark}[section]
\newtheorem{corollary}{Corollary}[section]
\newtheorem{definition}{Definition}[section]
\newtheorem{statement}{Statement}[section]

\numberwithin{equation}{section}

\title{\bf Boundary regularity of almost-minimizers for vectorial Alt-Caffarelli functionals with non-standard growth}
\author{Pedro Fellype Pontes\footnote{\noindent Zhejiang Normal University. School of Mathematical Sciences, Jinhua 321004 - People’s Republic of China. \noindent \texttt{E-mail address: fellype.pontes@gmail.com}}, \,\,\,\,\, Jo\~{a}o Vitor da Silva\footnote{\noindent Universidade Estadual de Campinas. Departamento de Matemática. Campinas, SP-Brazil 13083-859. \noindent \texttt{E-mail address: jdasilva@unicamp.br}}, \\  $\&$  \\Minbo Yang\footnote{\noindent  Corresponding author. Zhejiang Normal University.
School of Mathematical Sciences,
Jinhua 321004 - People's Republic of China.
 \noindent \texttt{E-mail address: mbyang@zjnu.edu.cn}}}
\date{January 2026}

\begin{document}

\maketitle

\begin{abstract}
 We prove that almost-minimizers, namely, functions satisfying a suitable variational inequality, of the Alt--Caffarelli-type functional
\[
\mathcal{J}_G({\bf v};\Omega) \coloneqq \int_\Omega \left( \sum_{i=1}^m G\big(|\nabla v_i(x)|\big) + \lambda \chi_{\{|{\bf v}|>0\}}(x) \right)\, dx,
\]
where ${\bf v} = (v_1,\dots,v_m)$ with $m \in \mathbb{N}$, enjoy optimal Lipschitz continuity up to the boundary. Here, $G$ is an $\mathcal{N}$-function satisfying suitable growth conditions, $\lambda>0$ is fixed, $\chi$ denotes the characteristic function of the indicated set, and $\Omega \subset \mathbb{R}^n$, $n\geq 2$, is a bounded Lipschitz domain.  Our results extend recent regularity theories for weakly coupled vectorial almost-minimizers associated with the $p$-Laplacian developed in \cite{BFS24} and \cite{DiPFFV24}, and yield new techniques applicable to a broad class of nonlinear one- and two-phase free boundary problems with non-standard growth. Notably, our results are new and striking even in the scalar case and for minimizers of the type studied by Mart\'{i}nez--Wolanski \cite{MW08} and da Silva \textit{et al.} \cite{daSSV2024}.

\end{abstract}

\medskip
\noindent \textbf{Keywords}: Lipschitz estimates; Up to the boundary regularity; Almost-minimizers; Alt-Caffarelli functionals in Orlicz spaces
\vspace{0.2cm}
	
\noindent \textbf{AMS Subject Classification: Primary  35B65; 35J60; Secondary 35R35; 49N60.  
}

\section{Introduction}\label{Sec1}

Let $\Omega \subset \mathbb{R}^n$ be a bounded $C^{1,\alpha}$-domain, with $n \ge 2$, $m \in \mathbb{N}$, $\lambda>0$ a fixed constant, $\chi$ denotes the characteristic function of the indicated set and $G: [0,+\infty) \to [0,+\infty)$ an $\mathcal{N}$-function, we will deal with the up to the boundary regularity of almost-minimizers of the Alt-Caffarelli type functional 
\begin{equation}\label{DefFunctional}
    \mathcal{J}_G({\bf v};\Omega) \coloneqq \int_\Omega  \left(\sum_{i=1}^mG\big(|\nabla v_i(x)|\big) + \lambda \chi_{\{|{\bf v}|>0\}}(x)\right) dx, 
\end{equation}
over the class
    $$\mathcal{K} \coloneqq \Big\{ {\bf v} \in W^{1,G}(\Omega;\mathbb{R}^m) \; : \; {\bf v} = \mathbf{\Phi} \; \mbox{on} \; \partial \Omega \; \mbox{and} \; v_i \ge 0 \Big\}. $$
In this case, we consider ${\bf v} = (v_1, \dots, v_m)$, $|{\bf v}| = \sqrt{(v_1)^2 + \cdots + (v_m)^2}$, and $\mathbf{\Phi} = (\phi_1, \dots, \phi_m)$, $0 \le \phi_i \in W^{1,G}(\Omega)$. 

Furthermore, we would like to point out that we understand as (local) \textbf{$(\kappa,\beta)$-almost-minimizer}, for $\mathcal{J}_G$ in $\Omega$, with constant $\kappa\le\kappa_0$ and exponent $\beta>0$, with a prescribed boundary value $\mathbf{\Phi}$, a vectorial function ${\bf u} = (u_1, \dots, u_m)$, such that $\mathbf{u} - \mathbf{\Phi} \in W_0^{1,G}(\Omega;\mathbb{R}^n)$, satisfying
\begin{equation}\label{VarIneqAlmMin}
     \mathcal{J}_G({\bf u};B_r(x_0) \cap \Omega) \le \big(1+\kappa r^\beta\big)  \mathcal{J}_G({\bf v};B_r(x_0) \cap \Omega),
\end{equation}
for any ball $B_r(x_0)\subset \mathbb{R}^n$, and any ${\bf v} \in W^{1,G}(B_r(x_0)\cap \Omega ;\mathbb{R}^m)$ such that ${\bf u} = {\bf v}$ on $\partial B_r(x_0)$.

In a heuristic way, the energy of $\mathbf{u}$ in $B_r(x_{0})\cap\Omega$ may fail to be minimal among all
competitors $\mathbf{v} \in \mathbf{u} + W^{1,G}_{0}(B_r(x_{0})\cap\Omega)$, but it remains \textit{almost} minimal.

It is important to note that almost minimizers have attracted growing interest for several reasons. 
First, they can be interpreted as perturbations of true minimizers and thus arise naturally when noise or lower-order effects are present (cf. \cite{daSSV2024} and \cite{DiPFFV24}). 
Second, minimizers subject to additional constraints, for instance, fixed-volume conditions or solutions to the classical obstacle problem, can often be recast as almost minimizers of suitably unconstrained variational formulations \cite{Anzellotti1983}. Finally, the analysis of almost minimizers demands a distinct methodological viewpoint, which in turn yields techniques and insights that also enhance the understanding of genuine minimizers (cf. \cite{BFS24} and \cite{PSY}). We recommend to interested readers Smit Vega Garcia's survey \cite{SVGarcia2023} concerning recent advances on regularity to almost-minimizers of
Bernoulli-type functionals with variable coefficients. 

\bigskip
Before stating the main results of this paper, we introduce two essential definitions to ensure a clearer and more precise understanding of the framework and the objects involved.
\begin{definition}[{\bf $\mathcal{N}$-function}]\label{Def-N-function}
    A continuous function $G : [0,+\infty) \rightarrow [0,+\infty)$ is an $\mathcal{N}$-function if:
	\begin{itemize}
		\item[$(i)$] $G$ is convex;
		\item[$(ii)$] $G(t) = 0$ if, and only if, $t = 0$;
		\item[$(iii)$] $\displaystyle\lim_{t\rightarrow0}\frac{G(t)}{t}=0$ and $\displaystyle\lim_{t\rightarrow+\infty}\frac{G(t)}{t}= +\infty$.
	\end{itemize}
\end{definition} 

The next definition is a particular class of $\mathcal{N}$-functions, which brings to light natural conditions introduced by Lieberman (see \cite{L} for more details) in studying regularity
estimates of degenerate/singular elliptic PDEs of the type $-\mathrm{div} \left(g(|\nabla u|)\dfrac{\nabla u}{|\nabla u|}\right) = \mathcal{B}(x,u,\nabla u)$. This class is commonly referred to as the \textit{Lieberman class,} and can be defined as follows.

\begin{definition}[{\bf Non-degenerate class}]\label{defclasses} 
Let $G: [0,+\infty) \to [0,+\infty)$ be an $\mathcal{N}-$function. We say that $G\in\mathcal{G}(\delta,g_0)$ if 
\begin{equation}\label{Lieberman-Cond01}
G^{\prime}(t)=g(t) \quad \text{for a function} \quad g \in C^0([0,+\infty))\cap C^1((0,+\infty)),
\end{equation}
and for $0 < \delta \leq  g_0$ fixed parameters it holds
\begin{equation}\label{Ga}
		0<\delta\le \dfrac{g^{\prime}(s)s}{g(s)} \le g_0.
\end{equation}
\end{definition}

By introducing these definitions, we are now in a position to state the main result of this paper, which can be formulated as follows:

\begin{theorem}[{\bf Lipschitz regularity up to the boundary}]\label{thm:boundary-Lip}
    Let $\Omega$ be a $C^{1,\alpha}$-domain of $\mathbb{R}^n$, and $G$ be an $\mathcal{N}$-function such that $G \in \mathcal{G}(\delta,g_0)$, with $\delta>1$. Let $\mathbf{u}:\Omega \to \mathbb{R}^m$ be a $(\kappa,\beta)$-almost-minimizer of $\mathcal{J}_G$ in $\Omega$, with constant $\kappa \le \kappa_0$, exponent $\beta>\max\left\{4\left(1-\frac{\delta}{g_0}\right), n \left(\frac{g_0}{\delta}-1\right)+1\right\}$, the prescribed boundary value $\mathbf{\Phi}\in C^{1,\alpha}(\Omega, \mathbb{R}^m)$. Then $\mathbf{u}$ is Lipschitz continuous up to the boundary.
\end{theorem}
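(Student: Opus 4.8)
The plan is to localize the problem near an arbitrary boundary point, flatten the boundary using the $C^{1,\alpha}$ regularity of $\partial\Omega$, and then run a Campanato-type decay argument on the frozen/flattened configuration. First I would reduce to the model case: fix $x_0 \in \partial\Omega$, and after subtracting the boundary datum $\mathbf{\Phi}$ (which is $C^{1,\alpha}$) and applying a $C^{1,\alpha}$-diffeomorphism straightening $\partial\Omega \cap B_\rho(x_0)$ to a piece of the hyperplane $\{x_n = 0\}$, observe that $\mathbf{u}$ transforms into an almost-minimizer of a functional of the same structural type — the diffeomorphism and the change of datum introduce $x$-dependent coefficients of class $C^{\alpha}$ in the gradient term, together with a new lower-order term, but the $(\kappa,\beta)$-almost-minimality is preserved up to enlarging $\kappa$ and (crucially) does not degrade $\beta$ below the stated threshold because the perturbation errors scale like $r^{\alpha}$ and $r$ respectively. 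The key point is that the free-boundary term $\lambda\chi_{\{|\mathbf{v}|>0\}}$ is invariant under composition with diffeomorphisms up to a bounded multiplicative factor, so it can be absorbed into the almost-minimality constant exactly as in \cite{BFS24}.

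Next I would establish the two basic tools on half-balls $B_r^+ = B_r(x_0)\cap\{x_n>0\}$ with zero Dirichlet data on the flat portion $\{x_n = 0\}$: (a) a \emph{Caccioppoli-type energy bound}, showing $\int_{B_{r/2}^+} G(|\nabla \mathbf{u}|)\,dx \lesssim r^{-1}\int_{B_r^+} G(|\nabla\mathbf{u}|)\,dx + \lambda r^{n}$, obtained by comparing $\mathbf{u}$ with the competitor that is the $G$-harmonic replacement (component-wise) or simply a cutoff interpolation, using \eqref{VarIneqAlmMin}; and (b) a \emph{comparison estimate} with the vector of $G$-harmonic functions $\mathbf{h} = (h_1,\dots,h_m)$ solving $\operatorname{div}\!\big(g(|\nabla h_i|)\nabla h_i/|\nabla h_i|\big) = 0$ in $B_r^+$, $h_i = u_i$ on $\partial B_r^+$, with vanishing trace on the flat boundary piece. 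For such $G$-harmonic functions one has boundary $C^{1,\gamma}$ estimates (Lieberman's theory, since $G\in\mathcal G(\delta,g_0)$ with the flat boundary and zero data), hence $\sup_{B_{r/2}^+} |\nabla \mathbf{h}| \lesssim \big(\intav{B_r^+} |\nabla\mathbf{h}|\big) + \ldots$, and an energy-comparison $\int_{B_r^+} \big(G(|\nabla\mathbf{u}|) - G(|\nabla\mathbf{h}|)\big) \lesssim \kappa r^\beta \mathcal J_G(\mathbf{u};B_r^+) + \lambda r^n$, which via the strong convexity encoded by $\delta>1$ (so that $G$ is comparable to a power $\geq$ quadratic and $\nabla\mathbf{h}\mapsto G(|\nabla\mathbf{h}|)$ is uniformly convex) upgrades to an $L^G$-closeness $\int_{B_r^+} G(|\nabla\mathbf{u} - \nabla\mathbf{h}|) \lesssim \big(\kappa r^\beta + \lambda r\big)\, \intav{B_r^+} G(|\nabla\mathbf{u}|)\,|B_r^+|\,+\,\lambda r^n$.

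Then I would iterate. Define $\Psi(r) = \intav{B_r^+(x_0)} G(|\nabla\mathbf{u}|)\,dx$ (or the closely related excess $\intav{B_r^+}|\nabla\mathbf{u} - (\nabla\mathbf{u})_{r}|$ adapted to the $\mathcal N$-function setting). Combining (b) with the boundary gradient bound for $\mathbf h$ gives, for $0<\tau<1/2$,
\[
\Psi(\tau r) \le C\,\tau^{\sigma}\,\Psi(r) + C\big(\kappa r^\beta + \lambda r\big)\big(\Psi(r) + 1\big),
\]
where $\sigma>0$ comes from the $C^{1,\gamma}$ boundary estimate for $G$-harmonic maps. Since $\beta > \max\{4(1-\delta/g_0),\, n(g_0/\delta - 1)+1\}$, in particular $\beta>1$ is \emph{not} needed but the exponents $\beta$ and $1$ appearing on the right are both positive, a standard iteration lemma (e.g. the one in \cite{BFS24} or the classical Campanato/Giaquinta iteration) yields $\Psi(r) \le C\, r^{\varepsilon_0}$... no — more precisely it yields $\sup_{r<r_0} \Psi(r) \le C$, i.e.\ a uniform $L^G$-gradient bound on small half-balls centered on the boundary, which combined with the analogous (known) interior estimate for $\mathcal J_G$-almost-minimizers and the growth condition $G(t)\ge c\,t^{1+\delta}$ gives $|\nabla\mathbf u|\in L^\infty$ locally up to $\partial\Omega$, hence Lipschitz continuity of $\mathbf u$ up to the boundary. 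The main obstacle I anticipate is step (b): transferring the Euclidean $L^G$ (rather than $L^2$) comparison and the boundary $C^{1,\gamma}$ estimate through the $\mathcal N$-function nonlinearity without losing the sharp dependence on $\delta/g_0$ — this is exactly where the precise lower bound on $\beta$ enters, since the "distance from the power case" is measured by $1-\delta/g_0$ and the dimensional defect by $g_0/\delta - 1$, and one must verify the iteration closes with those two competing thresholds; handling the coupling term $\lambda\chi_{\{|\mathbf v|>0\}}$ jointly across all $m$ components (it does not decouple) requires, as in \cite{BFS24}, using $|\mathbf u|$ as the relevant scalar quantity in the nondegeneracy/flatness alternatives rather than the individual $u_i$.
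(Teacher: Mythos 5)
There is a genuine gap, and it sits exactly where the paper's main new work is done. Your plan is, in essence, a boundary Campanato/Morrey iteration on $\Psi(r)=\intav{B_r^+}G(|\nabla\mathbf u|)\,dx$ after flattening, treating the term $\lambda\chi_{\{|\mathbf v|>0\}}$ as a perturbation. But the comparison with the $g$-harmonic replacement only gives
$\int_{B_r^+}G(|\nabla\mathbf u-\nabla\mathbf h|)\,dx \lesssim \kappa r^\beta\int_{B_r^+}G(|\nabla\mathbf u|)\,dx+\lambda r^n$;
the factor ``$\lambda r$'' you write in the upgraded closeness estimate is not there -- the measure term contributes $\lambda\,|B_r\cap\{|\mathbf u|=0\}|\le \lambda r^n$, which after dividing by $|B_r^+|$ is a \emph{non-decaying} constant $\lambda$. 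Consequently the dyadic recursion has the form $\Phi(\tau r)\le C\tau^n\Phi(r)+C\kappa r^\beta\Phi(r)+C\lambda r^n$ with $\Phi(r)=\int_{B_r^+}G(|\nabla\mathbf u|)\,dx$, and the iteration lemma (the paper's Lemma \ref{interationlemma}, which requires the target exponent to be strictly below $n$) only yields $\Phi(\rho)\lesssim\rho^{n-\varepsilon}$ for every $\varepsilon>0$. That is precisely the paper's Proposition \ref{interiorreg}: H\"older continuity for every exponent $\gamma<\delta/g_0$, \emph{not} the endpoint Lipschitz bound. The term $\lambda r^n$ is exactly critical, so no Campanato-type scheme alone can reach $\Phi(\rho)\lesssim\rho^n$ (equivalently $|\nabla\mathbf u|\in L^\infty$) near free boundary points; this is the classical Alt--Caffarelli obstruction, and your ``more precisely it yields $\sup_r\Psi(r)\le C$'' step is where the argument fails.

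The missing ingredient is a linear growth estimate $\sup_{\mathfrak B_r(z)}|\mathbf u|\le Cr$ at \emph{every} free boundary point $z$, including contact points $z\in\partial\Omega\cap\overline{\mathfrak F(\mathbf u)}$, and this is what the paper proves in Step one of Theorem \ref{thm:boundary-Lip}. It is obtained not by iteration but by a dichotomy: the Hopf-type boundary growth lemma for $g$-harmonic functions (Lemma \ref{L27}, via Lemma \ref{Hopf}) shows that either the replacement $u_i^*$ has slope $\le \xi_0^{-1}(2C\lambda)$ at scale $r$, or the zero-set density $\omega_r(z,\mathbf u)$ is small of order $r^{\beta+\gamma-1}$; in the latter case the quantitative $L^\infty$ comparison between $u_i$ and $u_i^*$ (Lemma \ref{Linffctrl}, Proposition \ref{LGctrl}, Corollary \ref{improveLinfctrl}) together with the Harnack inequality (using that $\mathbf u$ vanishes at $z$) forces $\sup u_i^*$ to be superlinearly small, a contradiction provided $\beta>n(g_0/\delta-1)+1$ -- which is also why that threshold appears in the statement, contrary to your remark that it plays no role in closing the iteration. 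Once linear growth is known, Lipschitz continuity follows by the rescaling $\mathbf u_{\mathrm d}(x)=\mathbf u(x_0+\mathrm d x)/\mathrm d$ with $\mathrm d=\operatorname{dist}(x_0,\mathfrak F(\mathbf u))$ and the boundary $C^{1,\tilde\mu}$ estimate away from the free boundary (Theorem \ref{gradHold}, Corollary \ref{MainCorollary}) -- the part of your plan concerning flattening and $g$-harmonic comparison does feed into these auxiliary results, but by itself it cannot produce the theorem.
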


Observe that Theorem~\ref{thm:boundary-Lip} substantially broadens our understanding of how minimal assumptions on the boundary datum and the model data govern the optimal boundary regularity of almost-minimizers of \eqref{VarIneqAlmMin} (namely, Lipschitz estimates). In this way, it provides new insight into the modern theory of one-phase free boundary problems and their applications to Bernoulli-type models and related settings (cf. \cite{FerReal-Gruen2025} and \cite{MW08}).

It is worth noting that the purely interior version of Theorem \ref{thm:boundary-Lip} was previously established by the authors in \cite[see Theorem 1.3]{PSY}, for the general case $\delta>0$. The strategy adopted there, inspired by \cite{BFS24}, consists of first proving Hölder continuity and $C^{1,\alpha}$-regularity (away from the free boundary) for almost-minimizers, and then deriving a linear growth estimate near free boundary points (see Theorems 1.1, 1.2, and Proposition 4.4 in \cite{PSY} for details). As a consequence, the boundedness of the gradient of the almost-minimizer follows.

It might therefore seem natural to expect that suitable boundary analogues of these results also hold, leading directly to Lipschitz regularity up to the boundary. However, this is unfortunately not the case in general. The main obstruction arises in the proof of the linear growth estimate, which does not extend to general contact points, that is, to points lying in the intersection of the boundary of the domain with the closure of the free boundary. More precisely, as observed in \cite{BFS24}, if
    $$\liminf_{r\to 0} \dfrac{|B_r(x_0)\cap \Omega\cap\{|\mathbf{u}|=0\}|}{|B_r(x_0)\cap \Omega|}>0,$$
then one can still recover Lipschitz regularity up to the point $x_0$. In the general case, however, the blow-up argument leads to a nontrivial $g$-harmonic function in a half-space that vanishes at $x_0$. This does not contradict the homogeneous Harnack inequality, which plays a crucial role in proving linear growth at free boundary points (see \cite[Proposition 4.4]{PSY} for further discussion).

As a consequence, the approach used in \cite{PSY} cannot be directly applied in our current setting. Nevertheless, we are able to overcome this difficulty by adopting a different strategy, which still allows us to achieve the same conclusion. The core idea is to quantify how far almost-minimizers are from their corresponding $g$-harmonic replacements and to exploit this deviation in a precise way. This quantification allows us to recover linear growth near the free boundary. One of the main steps toward achieving this quantitative analysis is to establish boundary Hölder estimates for the almost-minimizer and, away from the free boundary, for its gradient. A fundamental step in this quantitative analysis is the derivation of boundary Hölder estimates for the almost-minimizers and, away from the free boundary, for their gradients. A crucial ingredient in the proof of these estimates is Lemma \ref{g-harmonic_control}, which plays the role of a boundary growth lemma. While such a result is classical in the $p$-Laplace setting (see, for instance, \cite[Lemma 3.4]{DGK}), one of the contributions of this work is its extension to the Orlicz framework. This extension allows us to handle general nonstandard growth conditions and provides the essential tool needed to establish the corresponding boundary Hölder estimates.

Furthermore, the new results established here, namely Lemma \ref{Linffctrl}, Proposition \ref{LGctrl}, and Corollary \ref{improveLinfctrl}, hold uniformly at every point of the free boundary, and not only at the contact points. Consequently, the method presented in this work also yields an alternative proof of the local Lipschitz continuity of almost-minimizers previously obtained in \cite{PSY} for the case $\delta>1$.

\bigskip

Now, we recall the contribution of Mart\'{i}nez-Wolanski \cite{MW08}, who investigated the optimization problem of minimizing
\[
\mathcal{J}_{\mathrm{G},\lambda}(u,\Omega)
    := \int_\Omega \bigl(\mathrm{G}(|\nabla u|)+\lambda\,\chi_{\{u>0\}}\bigr)\,dx
    \quad \Rightarrow \quad
\left\{
\begin{array}{ccl}
    \mathrm{div}\!\left(g(|\nabla u|)\dfrac{\nabla u}{|\nabla u|}\right) = 0 & \text{in} & \{u>0\}\cap\Omega,\\[0.2cm]
    u = 0,\;\; |\nabla u| = \lambda^{\ast} & \text{on} & \partial\{u>0\}\cap\Omega,\\[0.2cm]
    G^{\prime}(t)=g(t) & \text{and} & g(\lambda^{\ast})\lambda^{\ast}-G(\lambda^{\ast})=\lambda.
\end{array}
\right.
\]
They work in the class $W^{1,\mathrm{G}}(\Omega)$ with $u-\phi_0\in W^{1,\mathrm{G}}_0(\Omega)$, where $\phi_0\ge0$ is bounded and $\lambda>0$. The authors show that minimizers are locally Lipschitz continuous and satisfy the associated Bernoulli-type free boundary problem, extending the classical Alt--Caffarelli theory to the Orlicz--Sobolev setting. They further establish a version of Caffarelli’s classification: flat or merely Lipschitz free boundaries are locally $C^{1,\alpha}$ for some universal $\alpha\in(0,1)$.
Subsequently, in \cite{BM2014}, these results were extended to two-phase problems. The authors proved the existence of a universal threshold---depending only on the degenerate ellipticity and other intrinsic parameters---for the density of the negative phase along the free boundary, below which uniform Lipschitz regularity holds. Their analysis relies heavily on Karakhanyan’s approach in \cite{Karakhanyan2008}, where the author previously established local Lipschitz estimates for a two-phase $p$-Laplacian problem under a smallness condition on the density of the negative phase.  Free boundary problems have also been extensively studied by many authors, both in the one-phase and two-phase settings. 

\bigskip

As a consequence of our analysis, we derive global Lipschitz bounds for minimizers of $\mathcal{J}_G$. In this sense, our estimates may be viewed as the global analogue of those obtained by Mart\'{i}nez--Wolanski \cite{MW08} and da Silva \textit{et al} \cite{daSSV2024} for the scalar scenario.

\begin{corollary}[{\bf Boundary Lipschitz regularity for minimizers}]\label{Corollary1.1}
    Let $\Omega$ be a $C^{1,\alpha}$ domain in $\mathbb{R}^n$, and let $G$ be an $\mathcal{N}$-function such that $G \in \mathcal{G}(\delta,g_0)$ with $\delta>1$. Suppose $\mathbf{u}:\Omega \to \mathbb{R}^m$ is a minimizer of $\mathcal{J}_G$ in $\Omega$ with boundary data $\mathbf{\Phi}\in C^{1,\alpha}(\Omega,\mathbb{R}^m)$. Then $\mathbf{u}$ is Lipschitz continuous up to the boundary.
\end{corollary}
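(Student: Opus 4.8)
The plan is to deduce the corollary directly from Theorem~\ref{thm:boundary-Lip} by verifying that a minimizer of $\mathcal{J}_G$ over the admissible class $\mathcal{K}$ is, in particular, a $(\kappa,\beta)$-almost-minimizer in the sense of \eqref{VarIneqAlmMin}, for \emph{every} admissible choice of $\kappa$ and $\beta$. Indeed, if $\mathbf{u}$ is a genuine minimizer and $\mathbf{v}\in W^{1,G}(B_r(x_0)\cap\Omega;\mathbb{R}^m)$ agrees with $\mathbf{u}$ on $\partial B_r(x_0)$, then extending $\mathbf{v}$ by $\mathbf{u}$ outside $B_r(x_0)$ produces a competitor in $\mathcal{K}$, whence $\mathcal{J}_G(\mathbf{u};B_r(x_0)\cap\Omega)\le\mathcal{J}_G(\mathbf{v};B_r(x_0)\cap\Omega)\le(1+\kappa r^\beta)\,\mathcal{J}_G(\mathbf{v};B_r(x_0)\cap\Omega)$ trivially, since $\kappa r^\beta\ge 0$. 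Thus $\mathbf{u}$ satisfies \eqref{VarIneqAlmMin} with, say, $\kappa=\kappa_0$ and any $\beta>\max\{4(1-\tfrac{\delta}{g_0}),\,n(\tfrac{g_0}{\delta}-1)+1\}$; one must only observe that such a $\beta$ exists because the right-hand side is a finite real number (here $\delta>1$ is used only through the hypotheses of Theorem~\ref{thm:boundary-Lip}).

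With this observation in hand, all the structural hypotheses of Theorem~\ref{thm:boundary-Lip} are met: $\Omega$ is a $C^{1,\alpha}$-domain, $G\in\mathcal{G}(\delta,g_0)$ with $\delta>1$, the boundary datum $\mathbf{\Phi}\in C^{1,\alpha}(\Omega,\mathbb{R}^m)$, and $\mathbf{u}-\mathbf{\Phi}\in W_0^{1,G}(\Omega;\mathbb{R}^m)$ by membership in $\mathcal{K}$. Applying Theorem~\ref{thm:boundary-Lip} then yields that $\mathbf{u}$ is Lipschitz continuous up to the boundary, which is exactly the assertion of the corollary.

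The only point that requires a word of care — and which I expect to be the sole (minor) obstacle — is the measurability and admissibility of the glued competitor: one needs the function equal to $\mathbf{v}$ inside $B_r(x_0)\cap\Omega$ and to $\mathbf{u}$ on $\Omega\setminus B_r(x_0)$ to lie in $W^{1,G}(\Omega;\mathbb{R}^m)$ and to retain the boundary trace $\mathbf{\Phi}$ and the sign constraint $v_i\ge 0$. The trace-matching on $\partial B_r(x_0)$ guarantees that the glued function has no singular part of the gradient across the sphere, so it belongs to $W^{1,G}$; the boundary trace on $\partial\Omega$ is unchanged since the modification is supported in $B_r(x_0)\cap\Omega$; and if $\mathbf{v}$ is chosen with nonnegative components (which costs nothing, as truncating each $v_i$ at $0$ only decreases $\mathcal{J}_G$), the sign constraint is preserved as well. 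Hence the gluing is legitimate and the reduction to Theorem~\ref{thm:boundary-Lip} is complete.
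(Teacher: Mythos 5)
Your proposal is correct and follows exactly the route the paper intends (the paper leaves the proof implicit, presenting the corollary as an immediate consequence of Theorem~\ref{thm:boundary-Lip}): a genuine minimizer satisfies \eqref{VarIneqAlmMin} trivially for any admissible $\kappa\le\kappa_0$ and any $\beta$ in the required range, since gluing a competitor (after truncating its components at zero, which does not increase $\mathcal{J}_G$) yields an admissible element of $\mathcal{K}$. Your handling of the sign constraint and of the gluing is the right level of care, so the reduction to the theorem is complete.
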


In particular, it is worth emphasizing that minimizers of $\mathcal{J}_G$, which are globally Lipschitz continuous due to Corollary \ref{Corollary1.1}, satisfy the associated Bernoulli-type free boundary problem
\[
\left\{
\begin{array}{rcl}
\mathrm{div}\!\left(g(|\nabla \mathbf{u}|)\dfrac{\nabla \mathbf{u}}{|\nabla \mathbf{u}|}\right) = 0 
& \text{in} & 
\displaystyle P_{\mathbf{u}} \coloneqq \bigcup_{i=1}^{m} \{u_i>0\} \cap \Omega,\\[0.2cm]
\mathbf{u} = 0, \quad |\nabla \mathbf{u}| = \lambda^{\sharp} 
& \text{on} & 
\partial P_{\mathbf{u}} \cap \Omega,\\[0.2cm]
G'(t) = g(t) 
& \text{and} & 
g(\lambda^{\sharp})\lambda^{\sharp} - G(\lambda^{\sharp}) = \lambda.
\end{array}
\right.
\]
This result extends the classical Alt--Caffarelli theory to the vectorial Orlicz--Sobolev setting, up to the boundary (cf. \cite{Fernandez-RealYu2023}).

\subsection{State-of-the-art and some motivational related results}

In this subsection, we present several relevant free boundary problems in which boundary regularity estimates play a central role. To begin with, it is worth emphasizing that solutions to many free boundary problems naturally arise as minimizers of Alt-Caffarelli-type functionals (see \cite{AltCaffarelli1981} for Alt-Caffarelli's seminal work)
$$
\displaystyle \mathscr{J}_{\textrm{AC}}(u) := \int_{\Omega \cap \{u>0\}} \mathscr{F}(x, u(x),\nabla u(x))dx  \rightarrow \min_{\mathscr{K}} \mathscr{J}_{\textrm{AC}}(v).
$$
Establishing regularity for such solutions is a delicate task, since these functionals are inherently non-smooth due to the presence of characteristic functions and free boundaries, which prevent the direct application of classical variational techniques.

An effective strategy to overcome this difficulty is based on penalization methods. The core idea of this approach is to approximate the original non-smooth functional by a family of smoother functionals, for instance, through singular perturbation schemes. These approximating problems admit minimizers with better analytical properties like Harnack inequality, good {\it a priori} estimates, comparison principle, among others, making it possible to derive uniform regularity estimates (independent of penalization scheme). By carefully passing to the limit and exploiting the uniform convergence of the approximating minimizers, one can then transfer the regularity results to solutions of the original free boundary problem. This methodology has proven to be a powerful and flexible tool in the analysis of boundary regularity for Alt–Caffarelli-type problems (see Caffarelli-Salsa’s classic monograph \cite{CS05} for such a subject).

In this direction, Karakhanyan \cite{Karakhanyan2006} studied the problem
\[
\begin{cases}
\Delta_p u_\varepsilon = \zeta_\varepsilon(u_\varepsilon) & \text{in } \Omega,\\[4pt]
0 \le u_\varepsilon \le 1 & \text{in } \Omega,\\[4pt]
u_\varepsilon(x) = g(x) & \text{on } \partial\Omega,
\end{cases}
\]
where $\Delta_p$ denotes the $p$-Laplace operator, $g \in C^{1,\alpha}(\partial \Omega)$, and $\zeta_{\varepsilon}(t) = \frac{1}{\varepsilon} \zeta\left(\frac{t}{\varepsilon}\right)$, where $0 \leq \zeta \in C_{0}^{\infty}([0, 1])$. He establishes
uniform bounds for $\nabla u_\varepsilon$. Consequently, up to a subsequence,
$u_\varepsilon \to u$ uniformly, and the limit $u$ satisfies the following one-phase free boundary
problem of Bernoulli-type in a weak sense:
\[
\begin{cases}
\Delta_p u = 0 & \text{in } \Omega \cap \{u>0\},\\[4pt]
|\nabla u| = \mathrm{c} & \text{in } \Omega \cap \partial \{u>0\}, \,\,\, (\mathrm{c}>0)\\[4pt]
u(x) = g(x) & \text{on } \partial\Omega.
\end{cases}
\]

\medskip

\medskip

In \cite{KarakShahg2016}, Karakhanyan and Shahgholian examined the boundary behaviour of the family of solutions $\{u_{\varepsilon}\}_{\varepsilon>0}$ to the singular perturbation problem
\begin{equation}\label{KS-Eq}
\Delta u_{\varepsilon} = \psi _{\varepsilon}(u_{\varepsilon}), 
\qquad |u_{\varepsilon}| \leq 1 \ \text{in } B_1^{+} := \{x_n > 0\} \cap \{|x| < 1\},
\end{equation}
where smooth boundary data $g$ are prescribed on the flat portion of $\partial B_1^{+}$. Here 
\[
\psi_{\varepsilon}(s) = \frac{1}{\varepsilon}\,\psi\!\left(\frac{s}{\varepsilon}\right),
\qquad 0 \leq \psi \in C_0^{\infty}(0,1), \quad \text{and} \quad  \int_0^1 \psi(t)\,dt = \mathrm{M}>0,
\]
is an approximation of the identity. The limit function, obtained as $\varepsilon \to 0$, locally solves the following free boundary problem in a very weak sense:
$$
\begin{cases}
\Delta u = 0 & \text{in } \{u>0\} \cup \{u<0\},\\[4pt]
(u_\nu^{+})^{2} - (u_\nu^{-})^{2} = 2\mathrm{M} & \text{on } \partial\{u>0\}.
\end{cases}
$$

In such a context, it is known that, under the assumption (a sort of degenerate phase transition)
\begin{equation}\label{DegPhasTran}
\nabla g(z) = 0 \quad \text{whenever} \quad  g(z)=0 \quad \text{on the flat portion of the boundary},    
\end{equation}
 the family $\{u_\varepsilon\}_{\varepsilon>0}$ is uniformly bounded in the Lipschitz norm; see \cite{Gurevich}. More precisely,
$$
\sup_{x \in B_{1/2}^{+}} |\nabla u_\varepsilon(x)| \le \mathfrak{L}_0,
$$
where $\mathfrak{L}_0>0$ is a constant independent of $\varepsilon$, valid for any solution of \eqref{KS-Eq}.

Recently, in \cite{BS2025}, the authors studied a singular perturbation problem driven by the $g$-Laplacian. In this setting, they established uniform Lipschitz estimates up to the boundary for a family of solutions $\{v_\varepsilon\}_{\varepsilon>0}$ to
\begin{equation}\label{SPP-BS}
\begin{cases}
\Delta_g v_\varepsilon = f + \psi_\varepsilon(v_\varepsilon) & \text{in } B_1^{+},\\[4pt]
v_\varepsilon \ge 0 & \text{in } B_1^{+},\\[4pt]
v_\varepsilon = g & \text{on } B_1'.
\end{cases}
\end{equation}
Here $f$ and $g$ are suitable data, and $\psi_\varepsilon$ denotes an approximation of the identity, as described above. Moreover, the limiting function $
\displaystyle v_0 := \lim_{\varepsilon \to 0} v_\varepsilon $
is Lipschitz continuous and satisfies a one-phase free boundary problem.

In addition, \cite{BS2025} investigated the free boundary problem
\begin{equation}\label{FBP}
\begin{cases}
\Delta_g v = f & \text{in } \{v>0\} \cap B_1^{+},\\[4pt]
|\nabla v^{+}| \le \mathcal{Q} & \text{along } \mathscr{F}_0(v, B_1^{+}),\\[4pt]
v = g & \text{on } B_1'.
\end{cases}
\end{equation}
Here $0 \le \mathcal{Q}$ is a bounded, continuous function defined along the free boundary of $v$, namely
\[
\mathscr{F}_0(v, B_1^{+}) := \partial\{v>0\} \cap B_1^{+}.
\]
Furthermore, the authors impose a natural condition on $g$ to ensure that $g^{+}$ remains of class $C^{1,\alpha}$ on the fixed boundary, namely a degenerate phase transition condition of the form \eqref{DegPhasTran}. Within this framework, they derived up-to-the-boundary Lipschitz estimates for solutions to \eqref{FBP}.


\medskip

\medskip

Now, we must stress that Fernandez-Real and Guen's manuscript \cite{FerReal-Gruen2025} makes significant advances in the understanding of the boundary regularity of minimizers of the \textbf{one-phase Bernoulli problem}, represented by the Alt-Caffarelli functional. The principal contribution lies in establishing \textbf{continuity up to the boundary} for minimizers under minimal regularity assumptions on the boundary data, notably for continuous and H\"{o}lder-continuous boundary conditions.
Specifically, the authors proved that for open domains $ \mathrm{D} \subset \mathbb{R}^{d} $ that are either convex or possess a sufficiently small Lipschitz constant, minimizers of the functional 
$$
\displaystyle \mathcal{J}_{\Lambda}(u, \mathrm{D})  = \int_{\mathrm{D}} (|\nabla u|^2 + \Lambda |\{u>0\} \cap \mathrm{D}|)dx
$$ with continuous boundary data are themselves continuous up to $ \partial \mathrm{D} $. This result enhances previous boundary regularity theorems, which often required smoother domain assumptions or stricter boundary regularity of the data (cf. \cite{Caffarelli1977}).

The methodology employed hinges on a combination of classical potential theory, comparison principles, and barrier arguments tailored to the free boundary context. Notably, the authors adapt and extend techniques from the classical theory of elliptic partial differential equations (see \cite{GiTr}) to the free boundary setting, overcoming challenges arising from the inherent nonlinearity and the variational structure of the problem.
A key aspect of this study is the demonstration that the boundary regularity of minimizers does not solely rely on smooth boundary data, but persists under mere continuity assumptions, provided the domain satisfies certain geometric regularity conditions. This aligns with and extends the scope of prior results on boundary behaviour in free boundary problems, such as \cite{AltCaffarelli1981} and \cite{JerinsonSavin2015}.

Implications of these results are twofold: first, they assure the well-posedness of the Dirichlet problem for continuity data in broader geometric contexts; second, they facilitate the analysis of generic regularity and uniqueness phenomena for families of solutions with continuous boundary conditions (\cite{Fernandez-RealYu2023}). In conclusion, the work \cite{FerReal-Gruen2025} significantly broadens the understanding of how minimal regularity conditions on the boundary data influence the regularity of solutions, thereby contributing critical insights to the theory of free boundary problems and their applications.

\medskip

Finally, we must recall that the paper by Bayrami \textit{et al.} \cite{BFS24}  where the authors established significant advances in the boundary regularity theory for almost-minimizers of variational functionals involving the $p$-Laplacian, specifically those of the form
\begin{equation}
\mathcal{J}(\mathbf{v}; \mathrm{D}) = \int_{\mathrm{D}} \sum_{i=1}^m |\nabla v_i|^p + \lambda \chi_{\{|\mathbf{v}|>0\}} \, dx,
\end{equation}
where $\mathbf{v} = (v_1, \ldots, v_m)$ with $v_i \in W^{1,p}(\mathrm{D})$, posed on a bounded Lipschitz domain $\mathrm{D} \subset \mathbb{R}^n$, for $1 < p < \infty$ and a fixed parameter $\lambda > 0$.

Concerning the more delicate boundary behaviour, the authors concentrate on the regime $p \geq 2$, which encompasses many physically relevant models. In this setting, they develop an \textbf{alternative method}, distinct from classical strategies.
Their approach unfolds through the following components:
\begin{itemize}
\item[\checkmark] A careful flattening of the boundary, reducing the analysis to a half-space configuration.
\item[\checkmark] Boundary gradient estimates obtained by adapting interior regularity arguments to the boundary, leveraging the Lipschitz geometry of the domain.
\end{itemize}
This systematic program yields the result that the vectorial almost-minimizer $\mathbf{v}$ is \textbf{Lipschitz continuous up to the boundary} for $p \geq 2$. It also furnishes an \textbf{alternative proof} of interior Lipschitz regularity that circumvents more involved blow-up procedures, offering a streamlined and constructive argument for regularity near the boundary.

\medskip

The attainment of boundary Lipschitz regularity for almost-minimizers substantially enriches the understanding of free boundary problems associated with Alt-Caffarelli type functionals in the context of weakly coupled vector-valued almost-minimizers in Orlicz spaces. It not only confirms the mathematical well-posedness of these profiles but also ensures their stability and robustness in physical applications involving such energies (cf. \cite{FSS}).
Therefore, motivated by the previous contributions, the development of an alternative boundary regularity framework for $G \in \mathcal{G}(\delta,g_0)$ with $\delta > 1$ opens avenues for extending these techniques to broader classes of nonlinear degenerate elliptic problems (e.g., weakly coupled vectorial almost-minimizers in Musielak-Orlicz and generalized Orlicz spaces), particularly in settings where boundary behaviour plays a decisive role in the qualitative analysis of solutions and free boundaries.

\section{On the Orlicz and Orlicz-Sobolev spaces}\label{Sec2}

Here we recall some properties of the Orlicz spaces, which can be found in \cite{A, HH, RR}. In what follows, we begin by recalling the basic definitions and some standard properties that will be frequently used throughout the paper.

The notion of an $\mathcal{N}$-function has already been introduced in the Introduction (see Definition \ref{Def-N-function}). We would like to recall that, by the convexity of any $\mathcal{N}$-function $G$, for every $t \in (0,+\infty)$ one has the following property:
	\begin{equation}\label{ine}
		G(\alpha t) \le \alpha G(t) \ \mbox{for} \ \alpha \in [0,1] \quad \mbox{and} \quad G(\tilde{\alpha} t) > \tilde{\alpha} G(t) \ \mbox{for} \ \tilde{\alpha}>1.
	\end{equation} 
    
\begin{definition}[{\bf Orlicz spaces}] The Orlicz space associated to $\mathcal{N}$-function $G$ and an open $U\subset \mathbb{R}^n$ is defined by
	$$
	L_G(U;\mathbb{R}^m)= \left\{{\bf u} \in L^1_{loc}(U;\mathbb{R}^m) \;: \;
	\int_U G \left(\frac{\vert {\bf u}(x)\vert}{\gamma}\right)\, dx<+\infty, \ \mbox{for some}~ \gamma>0\right\},
	$$
and
	$$
	|{\bf u}|_{L_G(U;\mathbb{R}^m)}=\inf \left\{\gamma>0\;:\;\int_U
	G \left(\frac{\vert {\bf u}(x)\vert}{\gamma} \right)\, dx\leq 1 \right\},
	$$
defines a norm (the Luxemburg norm) on $L_G(U;\mathbb{R}^m)$ and turns this space into a Banach space. In the study of the Orlicz space $L_G(U;\mathbb{R}^m)$, we denote by $K_G(U;\mathbb{R}^m)$ the Orlicz class as the set below
	$$K_G(U;\mathbb{R}^m) \coloneqq \left\{{\bf u} \in L^1_{loc}(U;\mathbb{R}^m)\;: \; \int_U G \big({\vert {\bf u}(x)\vert}\big)\, dx<+\infty\right\}.$$
In this case, we have
	$$K_G(U;\mathbb{R}^m) \subset L_G(U;\mathbb{R}^m),$$
where $L_G(U;\mathbb{R}^m)$ is the smallest subspace containing $K_G(U;\mathbb{R}^m)$.
\end{definition}

An important function in the study of the properties of the Orlicz space $L_G(U;\mathbb{R}^m)$ is the conjugate function of $G$, labelled $\widetilde{G}$, which is an $\mathcal{N}$-function defined by the Legendre transform of $G$, that is,
    $$\widetilde{G}(t) \coloneqq \sup \{st - G(s) \; \; : \;\; s \in [0,\infty)\}, \quad t \in [0,\infty).$$
Using the function $\widetilde{G}$, we have the Young inequality
    \begin{equation}\label{young}
        st \le \varepsilon G(s) + c(\varepsilon)\widetilde{G}(s),
    \end{equation}
for any $s,t\ge 0$ and $\varepsilon>0$.

\begin{definition}[{\bf $\Delta_2$ and $\nabla_2-$condition}]
An $\mathcal{N}$-function $G$ satisfies the  $\Delta_2$-condition (shortly, $G\in \Delta_2$), if there exist $k_G>0$ and $t_0\geq0$  such that
	\begin{equation}\label{defD2}
		G(2t)\leq  k_GG(t),~  t\geq t_0.
	\end{equation}
Similarly, we say that an $\mathcal{N}$-function $G$ satisfies the  $\nabla_2$-condition (shortly, $G\in \nabla_2$), if there exist $l_G>1$ and $t_0\geq0$  such that
	\begin{equation}\label{defn2}
		G(t)\leq  \dfrac{1}{2l_G}G(l_Gt),~  t\geq t_0.
	\end{equation}
\end{definition}
It is possible to prove that if $G \in \Delta_2$ then
	$$K_G(U;\mathbb{R}^m) = L_G(U;\mathbb{R}^m).$$

The corresponding Orlicz-Sobolev space is defined by
    $$W^{1, G}(U;\mathbb{R}^m) = \left\{ {\bf u} \in L_{G}(U;\mathbb{R}^m) \ :\ \frac{\partial u_j}{\partial x_{i}} \in L_{G}(U;\mathbb{R}^m), \quad \substack{i = 1, \dots, n\\ j=1,\dots, m}\right\},$$
endowed with the norm
    $$\Vert {\bf u} \Vert_{W^{1,G}(U;\mathbb{R}^m)} =   \vert\nabla {\bf u}\vert _{L_G(U;\mathbb{R}^{nm})} +  \vert {\bf u}\vert_{L^G(U;\mathbb{R}^m)}.$$
The space $W^{1,G}(U;\mathbb{R}^m)$ endowed with $\|\cdot\|_{1,G}$ is always a Banach space. Furthermore, this space is reflexive and separable if, and only if, $G\in\Delta_2\cap\nabla_2$. The subspace $W_0^{1,G}(U;\mathbb{R}^m)$ is defined as the weak$^*$ closure of $C_0^\infty(U;\mathbb{R}^m)$ in $W^{1,G}(U;\mathbb{R}^m)$; under $\Delta_2$-condition this space coincides with the norm closure of $C_0^\infty(U;\mathbb{R}^m)$, which provides a more practical characterization. For simplicity, when $m=1$ we will denote $L_G(U;\mathbb{R})$, $W^{1,G}(U;\mathbb{R})$, and $W_0^{1,G}(U;\mathbb{R})$ solely as $L_G(U)$, $W^{1,G}(U)$, and $W_0^{1,G}(U)$, respectively. 

Next, we turn our attention to the Lieberman class introduced in Definition \ref{defclasses}, which is known to enjoy a number of remarkable analytical features. This class has been extensively studied in the literature and plays a central role in regularity theory for problems with nonstandard growth. In particular, Lieberman \cite{L}, Martínez and Wolanski \cite{MW08}, and Fukagai, Ito, and Narukawa \cite{FIN} have established several important results concerning this class, which we summarize in the following statement.

\begin{statement}[{\bf Properties of the class $\mathcal{G}(\delta,g_0)$}]\label{Statement} Suppose $G\in \mathcal{G}(\delta,g_0)$. The following properties hold:

\begin{description}
	\item[$(g_1)$] $\min\left\{s^{\delta},s^{g_0}\right\}g(t) \le g(st) \le \max\left\{s^{\delta},s^{g_0}\right\}g(t)$;
	
	\item[$(g_2)$] $G$ is a convex and $C^2$ function;
	
	\item[$(g_3)$] $\dfrac{sg(s)}{g_0+1} \le G(s) \le sg(s),$ for any $s\ge 0$;

    \item[$(G_1)$]  Defining $\xi_0(t)\coloneqq \min\{t^{\delta+1},t^{g_0+1}\}$ and $\xi_1(t)\coloneqq \max\{t^{\delta+1},t^{g_0+1}\}$, for any $s,t\ge0$ and $u\in L_G(U)$ hold
        $$\xi_0(s)\dfrac{G(t)}{g_0+1} \le G(st) \le (g_0+1)\xi_1(s)G(t), \quad \mbox{and} \quad \xi_0(|u|_{L_G}) \le \int_{U}G(|u|)\ dx \le \xi_1(|u|_{L_G});$$

    \item[$(G_2)$]  $G(s)+G(t) \le G(s+t) \le 2^{g_0}(g_0+1)\big(G(s) + G(t)\big)$ for any $s,t\ge0$;

    \item[$(G_3)$] $\widetilde{G}\big(g(s)\big) \le g_0G(s)$ for all $s\ge0$;

    \item[$(G_4)$] Defining $\zeta_0(t)\coloneqq \min\left\{t^\frac{\delta+1}{\delta},t^\frac{g_0+1}{g_0}\right\}$ and $\zeta_1(t)\coloneqq \max\left\{t^\frac{\delta+1}{\delta},t^\frac{g_0+1}{g_0}\right\}$, for any $s,t\ge0$ and $u\in L_{\widetilde{G}}(U)$ hold
        $$\zeta_0\left(|u|_{L_{\widetilde{G}}}\right) \le \int_{U}\widetilde{G}(|u|)\ dx \le \zeta_1\left(|u|_{L_{\widetilde{G}}}\right).$$
\end{description}
\end{statement}

Therefore, any $G\in\mathcal{G}(\delta,g_0)$ satisfies the $\Delta_2$-condition. Furthermore, a crucial implication of \eqref{Ga} is that
\begin{equation}\label{UE}
	\min\{\delta,1\} \dfrac{g(|x|)}{|x|} |\xi|^2 \le \mathfrak{a}_{i,j} \xi_i \xi_j \le \max\{g_0,1\} \dfrac{g(|x|)}{|x|} |\xi|^2,
\end{equation}
for any $x,\xi \in \mathbb{R}^N$ and $i,j =1,...,n$, where $\mathfrak{a}_{i,j} = \frac{\partial \mathfrak{A}_i}{\partial x_j}$ with $\mathfrak{A}(x)=g(|x|)\frac{x}{|x|}$. This inequality means that the equation
\begin{equation}\label{G-harmonic}
	-\mbox{div} \left(g(|\nabla {\bf v}|)\dfrac{\nabla {\bf v}}{|\nabla {\bf v}|}\right) = 0, \quad \mbox{in} \ U,
\end{equation}
is uniformly elliptic for $\frac{g(|x|)}{|x|}$ bounded and bounded away from zero, for any domain $U\subset \mathbb{R}^n$. Moreover, if ${\bf v} \in W^{1,G}(U;\mathbb{R}^m)$ satisfies \eqref{G-harmonic}, we say that ${\bf v}$ is $g$-harmonic in $U$.

Additionally, we have the following general version of Poincar\'{e}'s inequality (see, for instance, \cite[Theorem 7]{DE}, or in an even more general version \cite[Lemma 4.1]{BB}). 
    \begin{lemma}\label{Poincare}
        Let $G$ be any function on the class $\mathcal{G}(\delta,g_0)$. Then, there exists $\theta=\theta(n,\delta,g_0)\in (0,1)$ such that
            \begin{equation*}
                \intav{U} G\left(\dfrac{|v-(v)_U|}{\mbox{diam}\ ( U)}\right) dx \le \mathrm{C}\left(\intav{U} \Big(G(|\nabla v|)\Big)^\theta dx\right)^{\frac{1}{\theta}},
            \end{equation*}
        where $\mathrm{C}=\mathrm{C}(n,\delta,g_0)>0$, whenever $v \in W^{1,G}(U)$. Moreover, the above estimate still holds with $v-(v)_U$ replaced by $v$, if $v \in W_0^{1,G}(U)$.
    \end{lemma}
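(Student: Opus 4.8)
The statement to be proved is the generalized Poincaré inequality in the Lieberman class, namely
\[
\intav{U} G\!\left(\frac{|v-(v)_U|}{\mathrm{diam}(U)}\right) dx \le \mathrm{C}\left(\intav{U} \big(G(|\nabla v|)\big)^{\theta}\, dx\right)^{1/\theta}.
\]
The natural strategy is to reduce everything to the classical Sobolev--Poincaré inequality in $L^1$ (or $L^q$ for $q<1$) by exploiting the growth estimates for $G$ recorded in Statement \ref{Statement}, especially $(g_3)$, $(G_1)$, $(G_2)$ and the doubling property $(\Delta_2)$. First I would normalize by scaling: after a dilation and translation we may assume $U$ has diameter comparable to $1$, so that the $\mathrm{diam}(U)$ factor disappears and we must bound $\intav{U} G(|v-(v)_U|)\,dx$. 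The key observation is that $G(|v-(v)_U|)$ can be compared, using $(\Delta_2)$ and the structural inequalities, to the composition $G \circ (\text{something controlled by } |\nabla v|)$, and that a Poincaré-type inequality with exponent strictly below $1$ on the gradient side is exactly what the $\theta$-power and the reverse-Hölder philosophy buys us.

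More concretely, the plan has three steps. \emph{Step 1: a modular Poincaré inequality.} Applying the classical Poincaré inequality to the scalar function $w \coloneqq G(|v-(v)_U|)^{\beta}$ for a suitable $\beta \in (0,1)$ — or equivalently writing $v-(v)_U$ as an integral of $\nabla v$ along segments and using Jensen's inequality together with the convexity properties in \eqref{ine} and $(G_1)$ — one reduces the left-hand side to an average of $G$ evaluated on a Riesz-potential-type expression $I_1(|\nabla v|)$. \emph{Step 2: absorbing the potential.} Here one uses that $G \in \mathcal{G}(\delta,g_0)$ forces polynomial two-sided control $\xi_0(s)G(t) \lesssim G(st) \lesssim \xi_1(s)G(t)$ from $(G_1)$, so $G$ behaves like a power between exponents $\delta+1$ and $g_0+1$; combined with the boundedness of the Riesz potential $I_1 : L^1 \to L^{n/(n-1),\infty}$ and the self-improving (reverse-Hölder / Gehring-type) nature of the estimate, the average of $G(I_1(|\nabla v|))$ is controlled by $\big(\intav{U} G(|\nabla v|)^{\theta}\big)^{1/\theta}$ for some $\theta = \theta(n,\delta,g_0) \in (0,1)$ — the exponent $\theta$ being precisely the gain that makes the weak-type Riesz estimate usable. \emph{Step 3: the $W_0^{1,G}$ case.} When $v \in W_0^{1,G}(U)$ one drops the mean $(v)_U$ and repeats the argument with the extension by zero, since then the full (not just oscillation) Poincaré inequality is available; alternatively one cites \cite[Theorem 7]{DE} or \cite[Lemma 4.1]{BB} directly, as the excerpt indicates.

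The main obstacle is \emph{Step 2}: making the passage from the modular quantity $\intav{U} G(I_1(|\nabla v|))\,dx$ to the right-hand side with a genuinely sub-unit power $\theta$ on $G(|\nabla v|)$. The difficulty is that $G$ is only an $\mathcal{N}$-function, not a pure power, so one cannot simply invoke $L^p$-boundedness of $I_1$; instead one must run the argument through the $\Delta_2 \cap \nabla_2$ structure, using $(G_1)$ to linearize $G$ on dyadic scales and a covering/truncation argument (à la Hedberg's proof of the Sobolev inequality, or the Acerbi--Fusco / Diening-type machinery for Orlicz spaces) to keep all constants depending only on $n$, $\delta$, and $g_0$. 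Getting $\theta < 1$ explicitly — rather than $\theta = 1$, which would be a cleaner but strictly weaker statement — is exactly what requires the reverse-Hölder inequality for the relevant maximal or potential operator, and that is the technically delicate heart of the proof. Since the excerpt presents this as a quotable lemma from \cite{DE} and \cite{BB}, in the write-up I would give the scaling reduction in detail and then invoke those references for the modular estimate, noting the translation of their hypotheses into the $\mathcal{G}(\delta,g_0)$ framework.
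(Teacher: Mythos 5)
The paper does not prove Lemma \ref{Poincare} at all: it is stated as a quoted result, with the proof delegated to \cite[Theorem 7]{DE} and, in greater generality, \cite[Lemma 4.1]{BB}, and your proposal — after the scaling reduction — likewise ends by invoking exactly those references for the delicate modular step, so you are taking essentially the same route as the paper. The self-contained sketch you outline (Riesz potential plus a reverse-H\"older/Gehring mechanism to force $\theta<1$) is not actually carried out and is not quite the mechanism used in those references (there the gain comes from a Sobolev--Poincar\'e embedding argument with $\theta$ tied to $\delta$ so that $G^{\theta}$ remains essentially convex, not from a Gehring-type self-improvement), but since you defer to the cited results for that step, nothing essential is missing relative to the paper's treatment.
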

In the previous result, as well as throughout the paper, we have the following notation
    $$(v)_U \coloneqq \intav{U} v\ dx \coloneqq \dfrac{1}{|U|}\int_{U} v\ dx.$$

Now, for a given $\mathcal{N}$-function $G\in \mathcal{G}(\delta,g_0)$, define the map $\mathbf{V}_G:\mathbb{R}^n \to \mathbb{R}^n$ by
    \begin{equation}\label{defVg}
        \mathbf{V}_G(z) \coloneqq \left(\dfrac{G^{\prime}(|z|)}{|z|}\right)^{\frac{1}{2}}z.
    \end{equation}
Hence, we have the following properties (see, for instance, \cite{BB} in a double-phase structure)
    \begin{equation}\label{SSMI}
        \mathrm{C}|\mathbf{V}_G(z_1) - \mathbf{V}_G(z_2)|^2 + \dfrac{G^{\prime}(|z_1|)}{|z_1|} \ z_1\cdot (z_2 - z_1) \le G(|z_2|) - G(|z_1|), \quad (z_1 \neq 0)
    \end{equation}
and if $\delta\ge 1$ we have
    \begin{equation}\label{SSMII}
        G(|z_1-z_2|) \le \mathrm{C}|\mathbf{V}_G(z_1)-\mathbf{V}_G(z_2)|^2,
    \end{equation}
for some $\mathrm{C}=\mathrm{C}(\delta,g_0)>0$. More precisely, we have the following control of the monotonicity of $G$.

\begin{lemma}[{\bf see \cite[Theorem 1]{BC}}]\label{monotonicity}
    Suppose that $G \in \mathcal{G}(\delta,g_0)$, with $\delta>0$. Then, for any $z_1,z_2 \in \mathbb{R}^n$, there exist constants $c_1,c_2>0$ depending only on $\delta$ and $g_0$ such that
        $$c_1 \left(\dfrac{G(|z_1|)}{|z_1|^2}z_1 - \dfrac{G(|z_2|)}{|z_2|^2}z_2\right)\big(z_1 -z_2\big) \le |\mathbf{V}_G(z_1)-\mathbf{V}_G(z_2)|^2 \le c_2\left(\dfrac{G(|z_1|)}{|z_1|^2}z_1 - \dfrac{G(|z_2|)}{|z_2|^2}z_2\right)\big(z_1 -z_2\big).$$
\end{lemma}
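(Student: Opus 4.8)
The plan is to reduce both sides to a common radial--angular model. The cases $z_1=0$ or $z_2=0$ follow by continuity, so assume $z_1,z_2\neq0$ and, relabelling if needed, $\rho_1:=|z_1|\ge|z_2|=:\rho_2>0$; write $z_i=\rho_i\omega_i$, $|\omega_i|=1$, and $\tau:=|\omega_1-\omega_2|\in[0,2]$. Expanding squares and using $\omega_1\cdot\omega_2=1-\tfrac12\tau^2$ gives, with $P(t):=g(t)\,t$ (so $|\mathbf V_G(z)|^2=P(|z|)$ and $\mathbf V_G(z)=\sqrt{P(|z|)}\,z/|z|$),
\[
|\mathbf V_G(z_1)-\mathbf V_G(z_2)|^2=\bigl(\sqrt{P(\rho_1)}-\sqrt{P(\rho_2)}\bigr)^2+\sqrt{P(\rho_1)P(\rho_2)}\;\tau^2,
\]
\[
\Bigl(\tfrac{G(\rho_1)}{\rho_1^2}z_1-\tfrac{G(\rho_2)}{\rho_2^2}z_2\Bigr)\cdot(z_1-z_2)=(\rho_1-\rho_2)\Bigl(\tfrac{G(\rho_1)}{\rho_1}-\tfrac{G(\rho_2)}{\rho_2}\Bigr)+\tfrac12\Bigl(\tfrac{\rho_2 G(\rho_1)}{\rho_1}+\tfrac{\rho_1 G(\rho_2)}{\rho_2}\Bigr)\tau^2,
\]
together with $|z_1-z_2|^2=(\rho_1-\rho_2)^2+\rho_1\rho_2\,\tau^2$. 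Hence it suffices to compare, separately, the two ``radial parts'' (the $\tau$-free terms) and the two coefficients of $\tau^2$.

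The scalar input I would use is a quantitative sharpening of $(g_3)$: since $g(t)t-G(t)=\int_0^t g'(s)\,s\,ds$, integrating \eqref{Ga} yields $\delta\,G(t)\le g(t)t-G(t)\le g_0\,G(t)$; combined with $(g_3)$ and the $\Delta_2$-type growth bounds $(g_1)$--$(g_2)$ from Statement~\ref{Statement}, this gives $G(t)\approx g(t)t=P(t)\approx g'(t)t^2$ and the doubling of $G,g,P$. From here I expect to obtain $\bigl(\sqrt{P(\rho_1)}-\sqrt{P(\rho_2)}\bigr)^2\approx\frac{(P(\rho_1)-P(\rho_2))^2}{P(\rho_1)}\approx\frac{g(\rho_1)}{\rho_1}(\rho_1-\rho_2)^2$ and $\frac{G(\rho_1)}{\rho_1}-\frac{G(\rho_2)}{\rho_2}=\int_{\rho_2}^{\rho_1}\frac{g(s)s-G(s)}{s^2}\,ds\approx\frac{g(\rho_1)}{\rho_1}(\rho_1-\rho_2)$, so the two radial parts are comparable (with constants depending on $\delta,g_0$). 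For the $\tau^2$-coefficients, the inequality $\tfrac12\bigl(\tfrac{\rho_2 G(\rho_1)}{\rho_1}+\tfrac{\rho_1 G(\rho_2)}{\rho_2}\bigr)\ge\sqrt{G(\rho_1)G(\rho_2)}\approx\sqrt{P(\rho_1)P(\rho_2)}$ holds for free by the arithmetic--geometric mean inequality, so the $\tau^2$-coefficient of the first identity is dominated by that of the second.

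It remains to dominate the $\tau^2$-coefficient of the second identity, which I would treat by a dichotomy on $\rho_1/\rho_2$. If $\rho_1\le 2\rho_2$, then $\rho_i,G(\rho_i),g(\rho_i),P(\rho_i)$ are pairwise comparable (by $\Delta_2$), so both quantities are $\approx\frac{g(\rho_1)}{\rho_1}\bigl[(\rho_1-\rho_2)^2+\rho_1\rho_2\tau^2\bigr]=\frac{g(\rho_1)}{\rho_1}|z_1-z_2|^2$. If $\rho_1>2\rho_2$, then $(\rho_1-\rho_2)^2\ge\tfrac14\rho_1^2$ while $\rho_1\rho_2\tau^2\le 4\rho_1\rho_2\le 2\rho_1^2$, so in each identity the radial part already controls the $\tau^2$-part: in the second because $\tfrac12\bigl(\tfrac{\rho_2 G(\rho_1)}{\rho_1}+\tfrac{\rho_1 G(\rho_2)}{\rho_2}\bigr)\tau^2\lesssim G(\rho_1)$ by $(G_1)$ while $\frac{g(\rho_1)}{\rho_1}(\rho_1-\rho_2)^2\approx G(\rho_1)$, and in the first because $\sqrt{P(\rho_1)P(\rho_2)}\le P(\rho_1)$ while $\bigl(\sqrt{P(\rho_1)}-\sqrt{P(\rho_2)}\bigr)^2\approx P(\rho_1)$ (using $\sqrt{P(\rho_2)}\le 2^{-(1+\delta)/2}\sqrt{P(\rho_1)}$, which is $(g_1)$). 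Assembling the two regimes with the previous paragraphs gives $c_1\le|\mathbf V_G(z_1)-\mathbf V_G(z_2)|^2/\bigl[\bigl(\tfrac{G(|z_1|)}{|z_1|^2}z_1-\tfrac{G(|z_2|)}{|z_2|^2}z_2\bigr)\cdot(z_1-z_2)\bigr]\le c_2$, with $c_i=c_i(\delta,g_0)$.

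The step I expect to be most delicate is the comparison of the two radial parts: the lower bound $\frac{G(\rho_1)}{\rho_1}-\frac{G(\rho_2)}{\rho_2}\gtrsim\delta\,\frac{g(\rho_1)}{\rho_1}(\rho_1-\rho_2)$ genuinely carries the factor $\delta$ whenever the $\tau^2$-term is subordinate and $\rho_1/\rho_2$ stays bounded (e.g.\ $z_1,z_2$ nearly parallel with comparable lengths) --- as the model $G(t)=t^{p}/p$, $p\to1^{+}$, already shows --- and this is precisely why the constants in the statement must be allowed to depend on $\delta$. All the remaining estimates reduce to routine use of the $\mathcal{G}(\delta,g_0)$-toolbox in Statement~\ref{Statement}.
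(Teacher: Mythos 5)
The paper offers no internal proof of Lemma \ref{monotonicity}: it is imported verbatim from Borowski--Chlebicka \cite[Theorem 1]{BC}, so your blind argument is being compared against a citation rather than an argument. Your proof is, as far as I can check, a correct self-contained substitute, and importantly it treats exactly the stated vector field $z\mapsto G(|z|)|z|^{-2}z$ (not the more familiar $g(|z|)|z|^{-1}z$), which is the version the paper invokes. The radial--angular identities are correct; the unconditional one-sided comparison (radial parts comparable, AM--GM plus $G(t)\approx g(t)t$ for the $\tau^2$-coefficients) is sound; and the dichotomy $\rho_1\le2\rho_2$ versus $\rho_1>2\rho_2$ is genuinely needed and correctly repairs the failure of the reverse term-by-term bound when $\rho_1\gg\rho_2$ (already for $G(t)=t^4$ the two $\tau^2$-coefficients are not comparable in that regime, while your observation that each expression is then dominated by its radial part, which is $\approx G(\rho_1)$, closes the gap). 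The steps you label routine do require a short splitting argument at $\rho_1/2$ together with $(g_1)$ --- namely $P(\rho_1)-P(\rho_2)\approx g(\rho_1)(\rho_1-\rho_2)$ and $\int_{\rho_2}^{\rho_1}\frac{g(s)s-G(s)}{s^2}\,ds\approx\frac{g(\rho_1)}{\rho_1}(\rho_1-\rho_2)$, the latter carrying the factor $\delta$ in its lower bound --- but these go through with constants depending only on $\delta,g_0$, and your closing remark that the constants must degenerate as $\delta\to0^{+}$ (cf. $G(t)=t^{p}/p$, $p\to1^{+}$) is accurate and consistent with the hypothesis $\delta>0$. In short: the citation buys the paper brevity and a result stated for broad classes of operators; your route buys an elementary proof relying only on \eqref{Ga} and $(g_1)$--$(g_3)$, at the cost of a page of case analysis that would have to be written out in full to be rigorous.
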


\begin{lemma}\label{growthg}
    Suppose that $G \in \mathcal{G}(\delta,g_0)$, with $\delta\ge 1$. Then, for any $z_1,z_2 \in \mathbb{R}^n\setminus\{\mathbf{0}\}$ there exists $\mathrm{C}=\mathrm{C}(n,g_0)>0$
        $$\left|\dfrac{g(|z_1|)}{|z_1|}z_1 - \dfrac{g(|z_2|)}{|z_2|}z_2\right| \le \mathrm{C}\dfrac{g\big(|z_1|+|z_2|\big)}{|z_1|+|z_2|}|z_1-z_2|.$$
\end{lemma}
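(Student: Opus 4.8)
The plan is to reduce the estimate to the scalar radial setting, since the map $z \mapsto \frac{g(|z|)}{|z|}z$ only depends on $|z|$ and on the unit direction $z/|z|$. Writing $t_1 = |z_1|$, $t_2 = |z_2|$, and decomposing
\[
\frac{g(t_1)}{t_1}z_1 - \frac{g(t_2)}{t_2}z_2
= \left(\frac{g(t_1)}{t_1} - \frac{g(t_2)}{t_2}\right)z_1 + \frac{g(t_2)}{t_2}(z_1 - z_2),
\]
I would control the two resulting terms separately. The second term is immediate: $\frac{g(t_2)}{t_2}|z_1-z_2| \le \mathrm{C}\,\frac{g(t_1+t_2)}{t_1+t_2}|z_1-z_2|$ follows from monotonicity of $t \mapsto \frac{g(t)}{t}$ — which holds precisely because $\delta \ge 1$ forces $\frac{g'(s)s}{g(s)} \ge 1$, i.e. $(\log g)'(s) \ge \frac{1}{s}$, so $\frac{g(s)}{s}$ is nondecreasing — and hence $\frac{g(t_2)}{t_2} \le \frac{g(t_1+t_2)}{t_1+t_2}$. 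For the first term, I must bound $\big|\frac{g(t_1)}{t_1} - \frac{g(t_2)}{t_2}\big|\,t_1$ by $\mathrm{C}\,\frac{g(t_1+t_2)}{t_1+t_2}|t_1 - t_2|$ (using $|z_1| = t_1$ and $||z_1|-|z_2|| \le |z_1-z_2|$).

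The key step is therefore a one-dimensional Lipschitz-type estimate for $h(t) := \frac{g(t)}{t}$. Assuming WLOG $t_1 \le t_2$ (the other case is symmetric after relabelling, absorbing the factor $t_1 \le t_2$ appropriately — one should be a little careful here, but the roles are interchangeable up to constants since $t_1, t_2 \le t_1 + t_2$), I would write $h(t_2) - h(t_1) = \int_{t_1}^{t_2} h'(s)\,ds$ and compute $h'(s) = \frac{g'(s)s - g(s)}{s^2} = \frac{g(s)}{s^2}\big(\frac{g'(s)s}{g(s)} - 1\big)$. By \eqref{Ga}, $0 \le \frac{g'(s)s}{g(s)} - 1 \le g_0 - 1$, so $0 \le h'(s) \le (g_0-1)\frac{g(s)}{s^2}$. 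Next, on the interval $s \in [t_1, t_2]$ one has $\frac{g(s)}{s^2} = \frac{1}{s}\cdot\frac{g(s)}{s} \le \frac{1}{t_1}\cdot\frac{g(t_2)}{t_2}$, again using monotonicity of $\frac{g(\cdot)}{\cdot}$ (upper bound) and $s \ge t_1$ (lower bound on $s$). Hence $h(t_2) - h(t_1) \le (g_0-1)\frac{1}{t_1}\cdot\frac{g(t_2)}{t_2}(t_2 - t_1)$, so multiplying by $t_1$ gives $\big|h(t_1)-h(t_2)\big|\,t_1 \le (g_0-1)\frac{g(t_2)}{t_2}|t_1-t_2| \le (g_0-1)\frac{g(t_1+t_2)}{t_1+t_2}|t_1-t_2|$, using $\frac{g(t_2)}{t_2} \le \frac{g(t_1+t_2)}{t_1+t_2}$ one final time.

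Combining the two terms and recalling $||z_1|-|z_2|| \le |z_1-z_2|$ yields the claim with $\mathrm{C} = \mathrm{C}(n, g_0)$ (in fact the constant depends only on $g_0$). I expect the main technical nuisance — not a deep obstacle — to be the bookkeeping in the case $t_1 > t_2$: there the "free" factor on the left is $|z_1| = t_1 > t_2$, so the naive integral bound over $[t_2, t_1]$ produces $\frac{t_1}{t_2}$ rather than a constant. This is resolved by instead estimating $\frac{g(s)}{s^2} \le \frac{g(t_1)}{t_1^2}$ for $s \le t_1$ is false in general, so one should instead keep the factor $t_1$ inside: write $\big|\frac{g(t_1)}{t_1}-\frac{g(t_2)}{t_2}\big|t_1 = \big|g(t_1) - \frac{t_1}{t_2}g(t_2)\big|$ and bound this directly via the mean value theorem applied to $g$ together with $(g_1)$ of Statement \ref{Statement} (which gives $g(t_1) \le (t_1/t_2)^{\delta} g(t_2) \cdot$ adjustments when $t_1 \ge t_2$), or more cleanly by symmetry: since the final bound is symmetric in $z_1, z_2$ up to the constant (as $t_1+t_2$ and $|z_1-z_2|$ are symmetric), it suffices to prove it with the factor $\min\{t_1,t_2\}$ in front, and then note $\max\{t_1,t_2\} \le \frac{g(t_1+t_2)}{t_1+t_2}\cdot\frac{(t_1+t_2)^2}{g(t_1+t_2)}\cdots$ — cleaner still is to simply apply the already-proven case after swapping labels and use $|z_1| \le |z_1| + |z_2|$, absorbing everything into $\mathrm{C}$. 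I would present the argument in the form that makes this symmetry explicit from the start.
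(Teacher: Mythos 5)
Your argument is correct in substance, but it takes a genuinely different route from the paper. The paper treats the map $\mathscr{A}(z)=\frac{g(|z|)}{|z|}z$ directly in $\mathbb{R}^n$: it computes the full Jacobian $D_z\mathscr{A}$, integrates it along the segment $\gamma(s)=z_2+s(z_1-z_2)$ via the fundamental theorem of calculus, bounds the resulting matrix by $\mathrm{C}(n,g_0)\,\frac{g(|\gamma(s)|)}{|\gamma(s)|}$ using \eqref{Ga}, and then invokes the monotonicity of $t\mapsto g(t)/t$ (valid since $\delta\ge1$) to control the integrand uniformly by $\frac{g(|z_1|+|z_2|)}{|z_1|+|z_2|}$. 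You instead use the scalar decomposition $\mathscr{A}(z_1)-\mathscr{A}(z_2)=\big(h(|z_1|)-h(|z_2|)\big)z_1+h(|z_2|)(z_1-z_2)$ with $h(t)=g(t)/t$, and reduce everything to the one-dimensional bound $0\le h'(s)\le (g_0-1)\,g(s)/s^2$ on the radial interval, using the same two ingredients (\eqref{Ga} and monotonicity of $h$). Both proofs are sound; yours is more elementary (no tensor computation, and it never works along the segment joining $z_1$ to $z_2$, which in the paper's argument may pass through the origin, where $\mathscr{A}$ is only continuous and a small extra justification of the FTC step is implicitly needed), while the paper's is shorter to state and avoids any case distinction. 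The one point you must tighten is the asymmetric case $|z_1|>|z_2|$: your closing paragraph meanders and contains a half-retracted estimate, whereas the fix is one line — both sides of the claimed inequality are invariant under swapping $z_1$ and $z_2$, so one may assume $|z_1|\le|z_2|$ from the outset and pivot the decomposition on the shorter vector (equivalently, prove the case $|z_1|\le|z_2|$ and apply it to the swapped pair otherwise). With that cleanup your argument closes with $\mathrm{C}=g_0$, i.e. a constant depending only on $g_0$, which is consistent with (indeed slightly sharper than) the statement.
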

\begin{proof}
    For simplicity, define $\mathscr{A}(z) \coloneqq \frac{g(|z|)}{|z|}z$ for $z \in \mathbb{R}^n\setminus\{\mathbf{0}\}$. A straightforward computation gives
        $$D_z\mathscr{A}(z) = \dfrac{g(|z|)}{|z|}\text{Id}_n + \left(\dfrac{g'(|z|)}{|z|^2} - \dfrac{g(|z|)}{|z|^3}\right)z \otimes z,$$
    where $\text{Id}_n$ denotes the identity matrix and $\otimes$ the standard tensor product. Let $\gamma(s) \coloneqq z_2 + s(z_1 - z_2)$ for $s \in [0,1]$. By the fundamental theorem of calculus,
        \begin{eqnarray*}
            \mathscr{A}(z_1) - \mathscr{A}(z_2) &=& \int_0^1 \dfrac{d}{ds}\Big(\mathscr{A}(\gamma(s))\Big) \ ds \\
            &=& \int_0^1 D_z \mathscr{A}\big(\gamma(s)\big) \ (z_1-z_2)\ ds \\
            &=& (z_1-z_2) \int_0^1 \left[\dfrac{g(|\gamma(s)|)}{|\gamma(s)|}\text{Id}_n + \left(\dfrac{g'(|\gamma(s)|)}{|\gamma(s)|^2} - \dfrac{g(|\gamma(s)|)}{|\gamma(s)|^3}\right)\gamma(s) \otimes \gamma(s)\right]\ ds.
        \end{eqnarray*}
    Hence, by \eqref{Ga} and the monotonicity of the map $t \mapsto \frac{g(t)}{t}$ (which holds for $\delta \ge 1$), we obtain
        \begin{eqnarray*}
            |\mathscr{A}(z_1)-\mathscr{A}(z_2)| &\le& \mathrm{C}(n)|z_1-z_2| \int_0^1 \left[\dfrac{g(|\gamma(s)|)}{|\gamma(s)|} + g'(|\gamma(s)|) + \dfrac{g(|\gamma(s)|)}{|\gamma(s)|}\right]\ ds \\
            &\le& \mathrm{C}(n,g_0) |z_1-z_2| \int_0^1 \dfrac{g(|\gamma(s)|)}{|\gamma(s)|}\ ds \\
            &\le& \mathrm{C}(n,g_0) \dfrac{g(|z_1|+|z_2|)}{|z_1|+|z_2|} |z_1-z_2|.
        \end{eqnarray*}
    This completes the proof.
\end{proof}

\subsection{Auxiliary estimates for non-autonomous operator}\label{Sec2.1}

In this subsection, we establish some results concerning harmonic solutions for a non-autonomous operator that play an important role in our subsequent analysis. Before proceeding, let us recall a classical iteration lemma that will be instrumental in achieving our main objectives.

\begin{lemma}[{see \cite[Lemma 5.13]{G}}]\label{interationlemma}
    Let $\phi:\mathbb{R}^+ \to \mathbb{R}^+$ be a non-negative and non-decreasing function satisfying
        $$\phi(\rho) \le A\left[\left(\dfrac{\rho}{R}\right)^\tau+\varepsilon\right]\phi(R) + BR^\theta,$$
    for some $A,B,\tau,\theta>0$ with $\tau>\theta$ and for all $0<\rho\le R\le R_0$, where $R_0>0$ is given. Then, there exist constants $\varepsilon_0= \varepsilon_0(A,\tau,\theta)>0$ and $c=c(A,\tau,\theta)>0$ such that if $\varepsilon>0$ given in the above inequality satisfies $\varepsilon\le \varepsilon_0$, we have
        $$\phi(\rho) \le c\left[\left(\dfrac{\rho}{R}\right)^\theta\phi(R)+B\rho^\theta\right],$$
    for any $0<\rho \le R \le R_0$.
\end{lemma}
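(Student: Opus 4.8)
The plan is to run the classical dyadic iteration, absorbing the perturbative term $A\varepsilon\,\phi(R)$ into a geometric gain and then interpolating between consecutive dyadic radii by monotonicity. First I would fix an intermediate exponent, say $\tau' \coloneqq \tfrac{\tau+\theta}{2}$, so that $\theta < \tau' < \tau$, and then choose a dilation factor $\eta \in (0,1)$ small enough that $2A\eta^{\tau} \le \eta^{\tau'}$; this is possible precisely because $\tau - \tau' > 0$, and the resulting $\eta$ depends only on $A,\tau,\theta$. Setting $\varepsilon_0 \coloneqq \eta^{\tau}$, for every $\varepsilon \le \varepsilon_0$ and every $R \le R_0$ the hypothesis applied with $\rho = \eta R$ yields
\[
\phi(\eta R) \le A\big(\eta^{\tau} + \varepsilon\big)\phi(R) + BR^{\theta} \le 2A\eta^{\tau}\phi(R) + BR^{\theta} \le \eta^{\tau'}\phi(R) + BR^{\theta}.
\]
The point of this first step is that the uncontrolled term has been swallowed into the strictly contractive factor $\eta^{\tau'} < 1$.

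Next I would iterate the one-step estimate along the geometric sequence of radii $\eta^{k}R$, which all remain $\le R_0$ since $R \le R_0$ and $\eta < 1$. Writing $a_k \coloneqq \phi(\eta^{k}R)$, the inequality $a_{k+1} \le \eta^{\tau'} a_k + B(\eta^{k}R)^{\theta}$ unwinds, by a straightforward induction on $k$, to
\[
\phi(\eta^{k}R) \le \eta^{k\tau'}\phi(R) + B\,\eta^{-\theta}(\eta^{k}R)^{\theta}\sum_{j\ge 0}\eta^{j(\tau'-\theta)} = \eta^{k\tau'}\phi(R) + \frac{B}{\eta^{\theta}\big(1-\eta^{\tau'-\theta}\big)}\,(\eta^{k}R)^{\theta},
\]
where the series converges because $\tau' > \theta$; the constant in front of $(\eta^{k}R)^{\theta}$ depends only on $\eta, \tau', \theta$, hence only on $A,\tau,\theta$.

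Finally I would descend from the discrete scale to an arbitrary pair $0 < \rho \le R \le R_0$. Given such $\rho$, pick the integer $k \ge 0$ with $\eta^{k+1}R < \rho \le \eta^{k}R$; monotonicity of $\phi$ gives $\phi(\rho) \le \phi(\eta^{k}R)$. The elementary bounds $\eta^{k} \le \eta^{-1}(\rho/R)$ and $\eta^{k}R \le \eta^{-1}\rho$, together with $\tau' \ge \theta$ and $\rho/R \le 1$, turn the iterated estimate into
\[
\phi(\rho) \le \eta^{-\tau'}\Big(\tfrac{\rho}{R}\Big)^{\theta}\phi(R) + \frac{B\,\eta^{-2\theta}}{1-\eta^{\tau'-\theta}}\,\rho^{\theta} \le c\left[\Big(\tfrac{\rho}{R}\Big)^{\theta}\phi(R) + B\rho^{\theta}\right],
\]
with $c = c(A,\tau,\theta)$, which is the claimed conclusion.

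I do not expect any conceptual obstacle here: the argument is purely elementary. The only points that require a modicum of care are the bookkeeping of the geometric series — keeping the exponent $\tau'-\theta$ strictly positive so the sum is finite, and tracking that the resulting constant degenerates only as $\eta \to 1$ or $\tau' \to \theta$ — and verifying that the threshold $\varepsilon_0 = \eta^{\tau}$ and the final constant $c$ genuinely depend only on the admissible data $A,\tau,\theta$, which holds once $\tau'$ is fixed explicitly as $\tfrac{\tau+\theta}{2}$.
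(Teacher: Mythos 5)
Your proof is correct and is essentially the classical argument: the paper does not reprove this lemma but cites \cite[Lemma 5.13]{G}, whose proof is exactly your dyadic iteration — absorb the $A\varepsilon\,\phi(R)$ term by choosing $\eta$ with $2A\eta^{\tau}\le\eta^{\tau'}$ and $\varepsilon_0=\eta^{\tau}$, iterate along $\eta^{k}R$, sum the geometric series with exponent $\tau'-\theta>0$, and interpolate by monotonicity. The bookkeeping (choice of $\tau'=\tfrac{\tau+\theta}{2}$, the constants depending only on $A,\tau,\theta$, and the covering of the case $\eta R<\rho\le R$) all checks out.
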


From now on, we will use, for $x_0 \in \mathbb{R}^{n-1}\times \{0\}$, the following notation
    $$B^+_\rho(x_0) \coloneqq \{x \in \mathbb{R}^n \; \; : \;\; x_n>0 \; \mbox{and} \: |x-x_0| < \rho \}, \quad D^+_\rho(x_0) \coloneqq \{x \in \mathbb{R}^n \; \; : \;\; x_n=0 \; \mbox{and} \: |x-x_0| < \rho \},$$
as the half ball and the flat boundary, respectively. For simplicity of notation, we write $B^+_\rho(0) = B^+_\rho$ and $D^+_\rho(0) = D^+_\rho$. Further, we introduce the vector field $\mathfrak{a}:B_R^+\times\mathbb{R}^n \to \mathbb{R}^n$ by
    \begin{equation}\label{defvecfield-a}
        \mathfrak{a}(x,\eta)\coloneqq\dfrac{g\big(|\eta\cdot A(x)|\big)}{|\eta\cdot A(x)|}\eta\cdot A(x) A^T(x), \quad \mbox{for any} \ (x,\eta) \in B_R^+\times\mathbb{R}^n,
    \end{equation}
where $G\in \mathcal{G}(\delta,g_0)$, $A : \mathbb{R}^n \to \mathbb{R}^n$ is a matrix-valued map satisfying the uniform ellipticity bounds
    \begin{itemize}
        \item $\sqrt{\frac{1}{2}}|w| \le |A(x)w| \le \sqrt{\frac{3}{2}}|w|$ for any $w\in\mathbb{R}^n$ and $x \in B_{\sqrt{2}R}$.
    \end{itemize}
and $A^T$ denotes the transpose of $A$. Further, we say that $u\in W^{1,G}(B_R^+)$ is $\mathfrak{a}$-harmonic if
    $$-\mbox{div} \ \mathfrak{a}(x,\nabla u)=0,\quad \mbox{in}\ B_R^+,$$
in the weak sense.

The next three remarks collect properties of the vector field $\mathfrak{a}$ that are well known in the literature in the autonomous setting. These properties will play a crucial role in the proof of Lemma~\ref{g-harmonic_control} below. Although their validity in the present framework follows from standard arguments, we briefly outline the main ideas behind their proofs for the sake of completeness and to clarify how the dependence on the matrix $A(x)$ enters the analysis.

\begin{remark}[{\bf Odd reflection}]\label{Remark-odd_reflection}

Let $w \in W^{1,G}(B_R^+)$ be a minimizer of the functional
\[
u \mapsto \int_{B_R^+} G\big(|\nabla u\, A(x)|\big)\,dx
\]
over the admissible class
\[
\mathcal{A} := \bigl\{ u \in W^{1,G}(B_R^+) : u = v-h \text{ on } \partial B_R^+ \bigr\}.
\]
Then $w$ is a weak solution of the associated Euler--Lagrange equation
\[
\int_{B_R^+} \mathfrak{a}(x,\nabla w)\cdot \nabla \varphi \, dx = 0
\quad \text{for all } \varphi \in W^{1,G}_0(B_R^+),
\]
where $\mathfrak{a}(\cdot,\cdot)$ is defined in \eqref{defvecfield-a}. Moreover, by construction, $w=0$ on $D_R^+$.

Define $\widetilde w$ as the odd extension of $w$ to $B_R$.
Since $w=0$ on $D_R^+$, it follows that $\widetilde w \in W^{1,G}(B_R)$.
Assume that the matrix field $A(x)$ is sufficiently regular and even with
respect to the reflection across $\{x_n=0\}$, so that the operator
$\mathfrak{a}(x,\eta)$ is invariant under this symmetry. Accordingly, define
the even extensions
\[
\widetilde A(x',x_n) :=
\begin{cases}
A(x',x_n), & x_n>0,\\[0.3em]
A(x',-x_n), & x_n<0,
\end{cases}
\qquad
\widetilde{\mathfrak{a}}(x,\xi)
:= \mathfrak{a}(x',|x_n|,\xi).
\]

Let $\phi \in C_c^\infty(B_R)$ be arbitrary and decompose it as
\[
\phi = \phi_{\mathrm{odd}} + \phi_{\mathrm{even}},
\]
with respect to the reflection $x_n \mapsto -x_n$. Testing the weak formulation
in $B_R^+$ with $\phi_{\mathrm{odd}}|_{B_R^+}$ and exploiting the odd symmetry of
$\widetilde w$, we infer that
\[
\int_{B_R} \widetilde{\mathfrak{a}}(x,\nabla \widetilde w)\cdot \nabla \phi \, dx = 0.
\]
Consequently, $\widetilde w$ is a weak solution of
\[
-\operatorname{div}\ \widetilde{\mathfrak{a}}(x,\nabla \widetilde w)=0
\quad \text{in } B_R,
\]
that is, $\widetilde w$ is $\widetilde{\mathfrak{a}}$-harmonic in $B_R$. Since $\widetilde w = w$ in $B_R^+$, the odd extension of $w$ is
$\widetilde{\mathfrak{a}}$-harmonic in the entire ball $B_R$.
\end{remark}

\begin{remark}[{\bf Energy estimates}] \label{Remark2.2}

Let $v$ be a minimizer of the functional 
\[
u \mapsto \int_{B_R} G\big(|\nabla u\, A_0(x)|\big)\,dx,
\]
where $G$ is an $\mathcal N$-function satisfying the $\Delta_2$-condition and $A_0\in C^0(\Omega;\mathbb{R}^{n\times n})$ is uniformly bounded and elliptic. Then, $v$ satisfies
\[
\int_{B_\rho} G\big(|\nabla v\, A_0(x)|\big)\,dx
\le
\mathrm{C}_0\left(\frac{\rho}{R}\right)^n
\int_{B_R} G\big(|\nabla  v\, A_0(x)|\big)\,dx,
\qquad 0<\rho\le R,
\]
with $\mathrm{C}_0=\mathrm{C}_0\!\left(n,G,\|A_0\|_{C^0(\Omega)}\right)>0$. Indeed, to prove this remark, let us split into three steps. Throughout the proof, constants may vary from line to line but depend only on $n$, $G$, and $\| A_0\|_{C^0(\Omega)}$.

\medskip
\noindent
{\bf Step 1:} \emph{Caccioppoli-type inequality.}
Fix $0<\rho<R$ and choose a cutoff function $\eta\in C_c^\infty(B_R)$ such that
\[
\eta\equiv 1 \ \text{in } B_\rho, 
\qquad 
0\le \eta\le 1,
\qquad 
|\nabla\eta|\le \frac{\mathrm{C}}{R-\rho}.
\]
Testing the minimality of $\widetilde w$ with the comparison map
\[
u := v - \eta\bigl(v-(v_{B_R})\bigr)
\]
and using the convexity of $G$ together with standard truncation arguments, one obtains the Caccioppoli inequality
\[
\int_{B_\rho} G\big(|\nabla v\, A_0(x)|\big)\,dx
\le
\mathrm{C} \int_{B_R}
G\!\left(\frac{|v-(v)_{B_R}|}{R-\rho}\right)\,dx.
\]
Such inequalities are classical for minimizers with Orlicz growth; see
Acerbi--Fusco~\cite{AcerbiFusco1984} or
Diening--Harjulehto--H\"ast\"o--R\r{u}\v{z}i\v{c}ka~\cite[Chapter~6]{DieningEtAl2011}.

\medskip
\noindent
{\bf Step 2:} \emph{Orlicz--Poincar\'e inequality.}
Since $G$ satisfies the $\Delta_2$-condition, the Orlicz--Poincar\'e inequality yields
\[
\int_{B_R}
G\!\left(\frac{|v-(v)_{B_R}|}{R-\rho}\right)\,dx
\le
\mathrm{C} \int_{B_R} G\big(|\nabla v|\big)\,dx,
\]
see \cite[Theorem~8.7.2]{DieningEtAl2011}.
Furthermore, using the boundedness and ellipticity of $A_0$, we infer
\[
\int_{B_R} G\big(|\nabla v|\big)\,dx
\le
\mathrm{C} \int_{B_R} G\big(|\nabla v\, A_0(x)|\big)\,dx.
\]
Combining the above estimates gives
\[
\int_{B_\rho} G\big(|\nabla v\, A_0(x)|\big)\,dx
\le
\mathrm{C} \int_{B_R} G\big(|\nabla v\, A_0(x)|\big)\,dx.
\]

\medskip
\noindent
{\bf Step 3:} \emph{Scaling and decay.}
 Define the rescaled function $v_R(x):= v(Rx)$ for $x\in B_1$.
A change of variables yields
\[
\int_{B_\rho} G\big(|\nabla v\, A_0(x)|\big)\,dx
=
R^n \int_{B_{\rho/R}}
G\big(|\nabla v_R(x)\, A_0(Rx)|\big)\,dx.
\]
Applying the previous estimate at scale $1$ and using the homogeneity of Lebesgue measure, we obtain
\[
\int_{B_{\rho/R}}
G\big(|\nabla v_R\, A_0(Rx)|\big)\,dx
\le
\mathrm{C}\left(\frac{\rho}{R}\right)^n
\int_{B_1}
G\big(|\nabla v_R\, A_0(Rx)|\big)\,dx.
\]
Rescaling back gives
\[
\int_{B_\rho} G\big(|\nabla v\, A_0(x)|\big)\,dx
\le
\mathrm{C}_0\left(\frac{\rho}{R}\right)^n
\int_{B_R} G\big(|\nabla v\, A_0(x)|\big)\,dx,
\qquad 0<\rho\le R,
\]
with $\mathrm{C}_0=\mathrm{C}_0\!\left(n,G,\|\ A_0\|_{C^0(\Omega)}\right)>0$. 
\end{remark}

\begin{remark}[{\bf Lieberman-type energy decay}]\label{Remark-energy_decay}

Let $0<\rho\le R$, and let $\widetilde w\in W^{1,G}(B_R)$ denote the odd extension
of $w$ constructed above. As shown previously, $\widetilde w$ is a weak
solution of
\[
-\operatorname{div}\ \widetilde{\mathfrak a}(x,\nabla \widetilde w)=0
\quad \text{in } B_R,
\]
where $\widetilde{\mathfrak a}$ is even with respect to the reflection
$x_n \mapsto -x_n$ and satisfies the standard $G$-growth and ellipticity
assumptions inherited from $\mathfrak a$, together with the regularity of
$A(x)$.

It is a classical consequence of the minimality of $\widetilde w$, equivalently,
of its $\widetilde{\mathfrak a}$-harmonicity, that $\widetilde w$ is a local
minimizer of the functional
\[
u \mapsto \int_{B_R} G\big(|\nabla u\,\widetilde A(x)|\big)\,dx
\]
with respect to compact perturbations. Consequently, by standard comparison
and scaling arguments for minimizers with Orlicz growth (cf.\ Caccioppoli-type
inequalities and energy-type decay estimates - see Remark \ref{Remark2.2}), there exists a
constant $\mathrm{C}_0=\mathrm{C}_0\!\left(n,G,\|A\|_{C^0(\Omega)}\right)>0$ such
that
\[
\int_{B_\rho} G\big(|\nabla \widetilde w\,\widetilde A(x)|\big)\,dx
\le
\mathrm{C}_0\left(\frac{\rho}{R}\right)^n
\int_{B_R} G\big(|\nabla \widetilde w\,\widetilde A(x)|\big)\,dx,
\qquad 0<\rho\le R.
\]

Since $\widetilde w$ is the odd extension of $w$ and $\widetilde A$ is the even
extension of $A$, we have
\[
\int_{B_\rho} G\big(|\nabla \widetilde w\,\widetilde A(x)|\big)\,dx
=
2\int_{B_\rho^+} G\big(|\nabla w\,A(x)|\big)\,dx \quad \text{and}
\quad
\int_{B_R} G\big(|\nabla \widetilde w\,\widetilde A(x)|\big)\,dx
=
2\int_{B_R^+} G\big(|\nabla w\,A(x)|\big)\,dx.
\]
Combining these identities yields
\[
\int_{B_\rho^+} G\big(|\nabla w\,A(x)|\big)\,dx
\le
\mathrm{C}_0\left(\frac{\rho}{R}\right)^n
\int_{B_R^+} G\big(|\nabla w\,A(x)|\big)\,dx,
\qquad 0<\rho\le R,
\]
which establishes the claimed energy decay estimate.
\end{remark}

\begin{lemma}\label{g-harmonic_control}
    Consider $G \in \mathcal{G}(\delta,g_0)$, with $\delta>0$ and $\mathfrak{a}(\cdot,\cdot)$ is defined \eqref{defvecfield-a}. Assume that $v \in W^{1,G}(B^+_R)$ is a weak solution of
        \begin{equation}\label{a-harmonic}
            \left\{ \begin{array}{rccl}
            -\mbox{div}\ \mathfrak{a}(x,\nabla v) &=&0,& \mbox{in}\ B^+_R; \\
            v&=&h,& \mbox{in}\ D^+_R,
        \end{array}\right.
        \end{equation}
    for a given $h \in W^{1,q}(B^+_R)$, with $q > g_0+1$. Then, for all $\rho \in (0,R]$ we have
        $$\int_{B^+_\rho} G(|\nabla v \,A(x)|)\ dx \le C \left( \left(\dfrac{\rho}{R}\right)^{n\left(1-\frac{g_0+1}{q}\right)}\int_{B^+_R}G(|\nabla v{\, A(x)}|)\ dx + \rho^{n\left(1-\frac{g_0+1}{q}\right)}\xi_1\left(\|\nabla h\|_{L^q(B_R^+)}\right)\right),$$
    where $\mathrm{C}=\mathrm{C}\big(\delta,g_0,n,\|A\|_{C^0},G(1)\big)>0$, and the function $\xi_1$ is defined in $(G_1)$.
\end{lemma}

\begin{proof}
    To begin with, let us denote by $w$ the minimizer of the functional
        $$u \mapsto \int_{B^+_R} G(|\nabla u{\, A(x)}|)\ dx,$$
    in the set $\{u \in W^{1,G}(B^+_R) \; : \; u = v-h \ \mbox{on} \ \partial B^+_R\}$. Thus, $w$ is $\mathfrak{a}$-harmonic in $B^+_R$ and $w=0$ on $D^+_R$. Therefore, by Remark \ref{Remark-energy_decay}
        \begin{equation}\label{Gw}
            \int_{B^+_\rho} G(|\nabla w{\, A(x)}|)\ dx \le \mathrm{C} \left(\dfrac{\rho}{R}\right)^n \int_{B^+_R} G(|\nabla w{\, A(x)}|)\ dx, \quad \mbox{for all}\ 0<\rho \le R.
        \end{equation}
    
    \noindent Since $v-w-h \in W^{1,G}(B^+_R)$, with $v-w-h =0$ on $D^+_R$, we may ensure by the $\mathfrak{a}$-harmonicity of $v$ and $w$ and Lemma \ref{growthg} that
        \begin{eqnarray}\label{213}
            I_A&\coloneqq&\int_{B^+_R}\left(\dfrac{g(|\nabla v{\, A(x)}|)}{|\nabla v{\, A(x)}|}\nabla v{\, A(x)} - \dfrac{g(|\nabla w{\, A(x)}|)}{|\nabla w{\, A(x)}|}\nabla w{\, A(x)}\right){A^T(x)}\big(\nabla v - \nabla w\big)\ dx \nonumber \\
            &=& \int_{B^+_R} \left(\dfrac{g(|\nabla v{\, A(x)}|)}{|\nabla v{\, A(x)}|}\nabla v{\, A(x)} - \dfrac{g(|\nabla w{\, A(x)}|)}{|\nabla w{\, A(x)}|}\nabla w{\, A(x)}\right){A^T(x)}\nabla h\ dx \nonumber \\
            &\le& \mathrm{C}(n,g_0) \int_{B^+_R} \dfrac{g(|\nabla v{\, A(x)}|+|\nabla w{\, A(x)}|)}{|\nabla v{\, A(x)}|+|\nabla w{\, A(x)}|}|\nabla v{\, A(x)}-\nabla w{\, A(x)}|{|A^T(x)}\ \nabla h|\ dx.
        \end{eqnarray}
    Now, let us estimate the right-hand side. Note that the Young inequality \eqref{young}, $(G_3)$, and properties of $A(x)$ yield
        \begin{eqnarray}\label{214}
            &&\int_{B^+_R} \dfrac{g(|\nabla v{\, A(x)}|+|\nabla w{\, A(x)}|)}{|\nabla v{\, A(x)}|+|\nabla w{\, A(x)}|}|\nabla v{\, A(x)}-\nabla w{\, A(x)}|\ |{A^T(x)}\nabla h|\ dx \nonumber \\
            &\le& \int_{B^+_R} g\big(|\nabla v{\, A(x)}| + |\nabla w{\, A(x)}| \big) \ |\nabla h|\ dx \nonumber\\
            &\le& \varepsilon \int_{B^+_R} \widetilde{G}\Big(g\big(|\nabla v{\, A(x)}|+|\nabla w{\, A(x)}|\big)\Big)\ dx  + c(\varepsilon) \int_{B^+_R} G(|\nabla h|)\ dx \nonumber \\
            &\le& \varepsilon \int_{B^+_R} G\big(|\nabla v{\, A(x)}|+|\nabla w{\, A(x)}|\big)\ dx + c(\varepsilon) \int_{B^+_R} G(|\nabla h|)\ dx
        \end{eqnarray}
    Separately, the property $(G_1)$ combined with Hölder inequality ensure 
        \begin{eqnarray}\label{Gh}
            \int_{B^+_R} G(|\nabla h|)\ dx &=& \int_{B^+_R\cap\{|\nabla h|\le 1\}} G(|\nabla h|)\ dx + \int_{B^+_R\cap\{|\nabla h|> 1\}} G(|\nabla h|)\ dx \nonumber\\
            &\le& G(1)(g_0+1) \left(\int_{B^+_R} |\nabla h|^{\delta+1}\ dx + \int_{B^+_R} |\nabla h|^{g_0+1}\ dx\right) \nonumber\\
            &\le& G(1)(g_0+1) \left[\left(\int_{B^+_R} |\nabla h|^{q}\ dx\right)^{\frac{\delta+1}{q}} R^{n\left(1-\frac{\delta+1}{q}\right)} + \left(\int_{B^+_R} |\nabla h|^{q}\ dx\right)^{\frac{g_0+1}{q}} R^{n\left(1-\frac{g_0+1}{q}\right)}\right] \nonumber\\
            &\le& G(1)(g_0+1)\xi_1\left(\|\nabla h\|_{L^q(B_R^+)}\right) R^{n\left(1-\frac{g_0+1}{q}\right)}.
        \end{eqnarray}
    By combining \eqref{213}--\eqref{Gh} we obtain
        \begin{eqnarray}\label{216}
            I_A&\le& \mathrm{C}\left(\varepsilon \int_{B^+_R} G\big(|\nabla v{\, A(x)}|+|\nabla w{\, A(x)}|\big)\ dx  + c(\varepsilon)\xi_1\left(\|\nabla h\|_{L^q(B_R^+)}\right) R^{n\left(1-\frac{g_0+1}{q}\right)}\right),
        \end{eqnarray}
    for some constant $\mathrm{C}=\mathrm{C}(n,\delta,g_0,G(1))>0$. Furthermore, by the minimality of $w$, and similarly to \eqref{Gh}, we have
        \begin{eqnarray*}
            \int_{B^+_R} G(|\nabla w{\, A(x)}|)\ dx &\le& \int_{B^+_R} G(|\nabla v{\, A(x)}- \nabla h{\, A(x)}|)\ dx\\
            &\le& \mathrm{C} \left( \int_{B^+_R} G\big(|\nabla v{\, A(x)}|\big)\ dx + \xi_1\left(\|\nabla h\|_{L^q(B_R^+)}\right) R^{n\left(1-\frac{g_0+1}{q}\right)}\right),
        \end{eqnarray*}
    which, combined with \eqref{216}, ensures 
        \begin{eqnarray}\label{Gv-w2}
            I_A &\le& \mathrm{C}\left(\varepsilon \int_{B^+_R} G\big(|\nabla v{\, A(x)}|\big)\ dx + c(\varepsilon)\xi_1\left(\|\nabla h\|_{L^q(B_R^+)}\right) R^{n\left(1-\frac{g_0+1}{q}\right)}\right).
        \end{eqnarray}
    Finally, we use $(g_3)$, \eqref{Gw}, Lemma \ref{monotonicity} and \eqref{Gv-w2} to get
        \begin{eqnarray*}
            \int_{B^+_\rho} G(|\nabla v{\, A(x)}|)\ dx &\le& \int_{B^+_\rho} |\mathbf{V}_G(\nabla v{\, A(x)})|^2\ dx \\
            &\le& \int_{B^+_R} |\mathbf{V}_G(\nabla v{\, A(x)}) - \mathbf{V}_G(\nabla w{\, A(x)})|^2\ dx + (g_0+1)\int_{B^+_\rho} G(|\nabla w{\, A(x)}|)\ dx \\
            &\le& I_A + C \left(\dfrac{\rho}{R}\right)^n \int_{B^+_R} G(|\nabla w|)\ dx \\
            &\le& \mathrm{C}\left(\left[\left(\dfrac{\rho}{R}\right)^n + \varepsilon\right] \int_{B^+_R} G\big(|\nabla v{\, A(x)}|\big)\ dx +\xi_1\left(\|\nabla h\|_{L^q(B_R^+)}\right) R^{n\left(1-\frac{g_0+1}{q}\right)}\right),
        \end{eqnarray*}
    where $\mathrm{C}=\mathrm{C}(\delta,g_0,n,\|A\|_{C^0},G(1))>0$. Therefore, the desired inequality follows from the iteration Lemma \ref{interationlemma}.
\end{proof}

\begin{proposition}[{\bf Harnack inequality for $\mathfrak{a}$-harmonic functions}]\label{a-harmonicHarnack}
Let $u\ge 0$ be a weak $\mathfrak{a}$-harmonic function in $B_R^+$, namely
\[
-\mathrm{div}\ \mathfrak{a}(x,\nabla u)=0 \quad \text{in } B_R^+,
\]
where the vector field $\mathfrak{a}$ is defined by \eqref{defvecfield-a}.  
Assume that the function $g:[0,\infty)\to[0,\infty)$ satisfies the standard structural conditions:
\begin{itemize}
\item $g\in C^1((0,\infty))$, $g(0)=0$, and $g$ is strictly increasing;
\item there exist constants $1<p\le q<\infty$ and $0<\nu\le L$ such that
\[
\nu t^{p-1}\le g(t)\le L(1+t^{q-1}), \qquad
\nu t^{p-2}\le g'(t)\le L(1+t^{q-2}), \quad t>0.
\]
\end{itemize}
Then, for every $B_{2\rho}(x_0)\Subset B_R^+$, there exists a constant $\mathrm{C}_{\mathrm{H}}>0$,
depending only on $n,p,q,\nu,L$, and the ellipticity constants of $A$, such that
\[
\sup_{B_\rho(x_0)} u (x)\le \mathrm{C}_{\mathrm{H}} \inf_{B_\rho(x_0)} u(x).
\]
\end{proposition}

\begin{proof}
Under the standing assumptions on the matrix field $A$ and on the function $g$, the vector field $\mathfrak{a}(x,\eta)$ satisfies uniform ellipticity and Orlicz-type growth conditions. More precisely, there exist positive constants $c_1,c_2>0$ depending only on the ellipticity bounds of $A$, such that for all $(x,\eta)\in B_R^+\times\mathbb{R}^n$ one has
    $$\mathfrak{a}(x,\eta)\cdot\eta \ge c_1\, g(|\eta|)|\eta|\quad \mbox{and} \quad |\mathfrak{a}(x,\eta)|\le c_2\, g(|\eta|).$$
In addition, for each fixed $x\in B_R^+$, the mapping $\eta \mapsto \mathfrak{a}(x,\eta)$ is monotone, in the sense that
\[
\big(\mathfrak{a}(x,\eta)-\mathfrak{a}(x,\xi)\big)\cdot(\eta-\xi)\ge 0
\quad \text{for all } \eta,\xi\in\mathbb{R}^n.
\]
These structural properties place the operator associated with 
$\mathfrak{a}$ within the class of uniformly elliptic operators with nonstandard (Orlicz-type) growth and variable coefficients. Consequently, the proof proceeds by applying a De Giorgi–Moser iteration scheme suitably adapted to this framework, following well-established arguments for quasilinear operators with general growth conditions.
%
%
%
%
%
%
\end{proof}

The following result is a slight refinement, in our case, of the well-known Hopf’s Lemma proved by Braga and Moreira in \cite{BM}. More precisely, its proof follows directly from \cite[Theorem 3.2]{BM}, in combination with the Harnack inequality established in Proposition \ref{a-harmonicHarnack}.

\begin{lemma}[{\bf Hopf’s lemma for $\mathfrak{a}$-harmonic functions}]\label{Hopf}
    Assume that $G\in\mathcal{G}(\delta,g_0)$, and let $h$ be an $\mathfrak{a}$-harmonic function in $B^+_r(x_0)$, in the sense of \eqref{a-harmonic}, with nonnegative boundary value on $\partial B^+_r(x_0)$. Thus, it holds
        $$\mathrm{dist}\big(x_0+rx,\partial B^+_r(x_0)\big)\left(\dfrac{1}{r} \sup_{B^+_{\frac{r}{2}}(x_0)}\ h\right) \le \mathrm{C}(n,\delta,g_0)\ h(x_0+rx),$$
    where $\mathrm{dist}(\cdot,\cdot)$ denotes the distance function.
\end{lemma}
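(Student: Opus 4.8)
The statement to prove is \textbf{Hopf's lemma for $g$-harmonic functions} (Lemma \ref{Hopf}), which asserts a lower bound for a nonnegative $g$-harmonic function $h$ in a half-ball $B^+_r(x_0)$ by the distance to the boundary times the normalized supremum over the smaller half-ball.

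\textbf{Plan of proof.}

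The plan is to reduce everything to the unit scale by a standard rescaling, and then combine two ingredients that are available from the cited literature: the interior Harnack inequality of Lieberman for $g$-harmonic functions, and the boundary Hopf-type estimate of Braga--Moreira. First I would normalize: set $\tilde h(y) \coloneqq h(x_0 + ry)$ for $y \in B^+_1 = B^+_1(0)$ (after translating $x_0$ to the origin and noting that $g$-harmonicity is scale-invariant, since the equation $-\operatorname{div}(g(|\nabla v|)\nabla v/|\nabla v|)=0$ is homogeneous and invariant under the dilation $v \mapsto v(r\,\cdot)$). Under this normalization it suffices to prove
\[
\operatorname{dist}(y,\partial B^+_1)\,\Big(\sup_{B^+_{1/2}} \tilde h\Big) \le \mathrm{C}(n,\delta,g_0)\,\tilde h(y), \qquad y \in B^+_1,
\]
which is exactly the claimed inequality rewritten with $x = y$ and $r = 1$ after dividing through.

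Next I would split into the interior region and the near-boundary region. On a fixed interior subdomain, say $\overline{B^+_{3/4}} \setminus B^+_{1/8}$ stays at a definite distance from the boundary; there the Harnack inequality from Lieberman \cite{L} gives $\sup \tilde h \le \mathrm{C}\,\inf \tilde h$ on a chain of balls, so $\tilde h$ is comparable to its supremum over $B^+_{1/2}$ on any such compact interior piece, and since $\operatorname{dist}(y,\partial B^+_1)$ is bounded there, the desired inequality is immediate with a constant depending only on $n,\delta,g_0$. The genuinely delicate region is a collar near the flat boundary $D^+_1$ and near the spherical part $\partial B_1 \cap \{x_n>0\}$, where $\operatorname{dist}(y,\partial B^+_1) \to 0$ and one must show $\tilde h(y)$ decays \emph{at most linearly} in this distance, i.e. $\tilde h(y) \ge c\,\operatorname{dist}(y,\partial B^+_1)$ times a lower bound for the supremum. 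Here is where \cite[Theorem 3.2]{BM} enters: it is precisely the boundary growth lemma giving the linear lower bound for nonnegative $g$-superharmonic (here $g$-harmonic) functions at a smooth boundary point, applied at each boundary point of $\partial B^+_1$ reached by $y$. To feed Braga--Moreira's estimate a nontrivial lower seed, I would use Harnack once more: pick an interior anchor point $z_\ast \in B^+_{1/2}$ (e.g. the center of mass of $B^+_{1/2}$) where, by interior Harnack, $\tilde h(z_\ast) \ge c(n,\delta,g_0)\sup_{B^+_{1/2}}\tilde h$; this is legitimate because $\tilde h \ge 0$ throughout and the oscillation control from \cite{L} converts the pointwise value at $z_\ast$ into a bound on the supremum. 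Then the Braga--Moreira Hopf estimate, anchored so that $\tilde h$ has this definite size at a fixed interior point, yields $\tilde h(y) \ge c\,\operatorname{dist}(y,\partial B^+_1)\,\tilde h(z_\ast) \ge c'\,\operatorname{dist}(y,\partial B^+_1)\sup_{B^+_{1/2}}\tilde h$, which is the claim.

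\textbf{Main obstacle.} The routine rescaling and the interior Harnack steps are standard; the subtle point is matching hypotheses at the boundary. Braga--Moreira's theorem is typically stated for a fixed nice domain and a boundary point with an interior-ball (or exterior-ball) condition, whereas here the relevant boundary $\partial B^+_1$ has a corner along $D^+_1 \cap \partial B_1$. To handle this cleanly I would restrict attention to the flat portion $D^+_1$ (which is what the inductive use in the paper actually needs: the distance there is comparable to $y_n$), where one can place an interior touching ball of a uniform radius, and dismiss the spherical portion either by the interior Harnack argument (since on $B^+_{1/2}$ the distance to the spherical part of $\partial B^+_1$ is bounded below) or by a second application of Braga--Moreira with a touching ball from inside. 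Thus the only care needed is to verify that the geometric hypotheses of \cite[Theorem 3.2]{BM} are met with constants depending only on $n$ (and then $\delta,g_0$ enter through the ellipticity bounds \eqref{UE}), and to make sure the anchoring via Harnack is done at a point whose position is universal; once these are in place, the chain of inequalities closes with a constant $\mathrm{C}=\mathrm{C}(n,\delta,g_0)$ as asserted.
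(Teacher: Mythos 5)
Your proposal follows essentially the same route as the paper's own proof: rescale to the unit half-ball, use the Braga--Moreira estimate \cite[Theorem 3.2]{BM} (with zero right-hand side) as the boundary growth ingredient, and invoke Lieberman's Harnack inequality to relate the resulting ``seed'' to $\sup_{B^+_{1/2}}h$. The only real difference is organizational: the paper applies \cite{BM} directly in its averaged form, $h(x)\ge \mathrm{C}\big(\intav{B^+_{1/2}}h^{\varepsilon_0}\,dy\big)^{1/\varepsilon_0}\,\mathrm{dist}(x,\partial B^+_1)$, and then upgrades this $L^{\varepsilon_0}$-average to the supremum via Harnack, whereas you anchor first at a fixed interior point $z_\ast$ and then apply the boundary estimate; your interior/near-boundary splitting and the discussion of the corner where $D^+_1$ meets the sphere are extra scaffolding the paper does not use. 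One caution: the step you describe as ``legitimate by interior Harnack,'' namely $\tilde h(z_\ast)\ge c\,\sup_{B^+_{1/2}}\tilde h$, does not follow from interior Harnack alone, since $B^+_{1/2}$ is not compactly contained in $B^+_1$ and the supremum may be approached along the flat boundary, where only nonnegativity of the data is assumed; this is exactly the delicate passage that the paper also treats briskly (its chain passing from the $L^{\varepsilon_0}$-average to $\inf_{B^+_{1/2}}h$ and then to $\sup_{B^+_{1/2}}h$ ``by Harnack'' rests on the same upgrade), so you have not deviated from the paper, but you should not present that inequality as a routine interior Harnack chain -- it is the point where a weak-Harnack/boundary-type ingredient, or an additional hypothesis on the boundary values along the flat part, is genuinely needed.
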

\begin{proof}
    Let $h$ be an $\mathfrak{a}$-harmonic function in $B_1$. By \cite[Theorem 3.2]{BM} with zero right-hand side, together with the Harnack inequality, see Proposition \ref{a-harmonicHarnack}, there exist universal constants $\mathrm{C}>0$ and $\varepsilon_0>0$ such that 
        \begin{eqnarray*}
            h(x) &\ge& \mathrm{C}\left(\intav{B^+_{\frac{1}{2}}} h(y)^{\varepsilon_0}\ dy\right)^{\frac{1}{\varepsilon_0}} \ \mathrm{dist} \big(x,\partial B^+_1\big)\\
            &\ge& \mathrm{C} \inf_{y \in B^+_{\frac{1}{2}}} h(y) \ \mathrm{dist} \big(x,\partial B^+_1\big) \\
            &\ge& \mathrm{C} \sup_{y \in B^+_{\frac{1}{2}}} h(y) \ \mathrm{dist} \big(x,\partial B^+_1\big).
        \end{eqnarray*}
    Finally, to obtain the corresponding estimate for any $h$ defined in $B^+_r(x_0)$, it suffices to consider the rescaled function $\widetilde{h}(x)=h(x_0+rx)$.
\end{proof}

The following lemma is inspired by \cite[see Lemma 2.5]{FSS}, where the corresponding results were established in the $p$-Laplacian setting. For the sake of completeness and to reinforce the reader’s understanding, we highlight here the key differences arising in our more general framework. 

\begin{lemma}\label{L27}
    Assume that $G\in\mathcal{G}(\delta,g_0)$. and let $w \in W^{1,G}(B_r(x_0))$ be any nonnegative function. Then, there exists $\mathrm{C}=\mathrm{C}(n,\delta,g_0)>0$ such that
        \begin{equation}\label{Hopf-estimate}
        \xi_0\left(\dfrac{1}{r}\ \sup_{B^+_{\frac{r}{2}}(x_0)}h\right)|B^+_r(x_0)\cap \{w=0\}| \le \mathrm{C}\int_{B^+_r(x_0)} G\big(|\nabla \big(w- h\big)(x){\color{blue}A(x)}|\big)\ dx,
        \end{equation}
    where $h$ is $\mathfrak{a}$-harmonic in $B^+_r(x_0)$ and is equal to $w$ on $\partial B^+_r(x_0)$, in the trace sense.
\end{lemma}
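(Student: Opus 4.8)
The plan is to combine the Hopf-type estimate of Lemma \ref{Hopf} with the stability/monotonicity machinery encoded in \eqref{SSMI}--\eqref{SSMII} and Lemma \ref{monotonicity}. First I would observe that on the set $\{w=0\}\cap B^+_r(x_0)$ the function $h$ itself carries all the mass: since $w\ge 0$, $h$ is $g$-harmonic with $h=w$ on $\partial B^+_r(x_0)$, and by the comparison principle $h\ge 0$ in $B^+_r(x_0)$; then Lemma \ref{Hopf} gives, for every $x$ in (a fixed fraction of) $B^+_r(x_0)$,
\[
\mathrm{dist}\big(x,\partial B^+_r(x_0)\big)\Big(\tfrac1r\sup_{B^+_{r/2}(x_0)}h\Big)\le \mathrm{C}\,h(x).
\]
Hence on a set of the form $B^+_{r/4}(x_0)\cap\{w=0\}$, where $\mathrm{dist}(x,\partial B^+_r(x_0))\ge cr$, we get the pointwise lower bound $h(x)\ge c\,\big(\sup_{B^+_{r/2}}h\big)=:c\,m_r$. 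The monotonicity-in-$r$ of $\sup_{B^+_{\rho}}h$ lets me replace $B^+_{r/4}$ by $B^+_{r/2}$ up to adjusting constants, so I will work with $|B^+_r(x_0)\cap\{w=0\}|$ directly as in the statement (absorbing the dyadic loss into $\mathrm{C}$).

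Next, the idea is to bound $\xi_0(m_r/r)\,|B^+_r(x_0)\cap\{w=0\}|$ from above by an integral of $G$ of the difference $\nabla w-\nabla h$. The key point is that on $\{w=0\}$ we have $w=0$, so the function $\psi:=h-w$ equals $h$ there, and $\psi\in W^{1,G}_0(B^+_r(x_0))$ has boundary trace zero on $\partial B^+_r(x_0)$. On the good set $A:=B^+_{r/4}(x_0)\cap\{w=0\}$ we therefore have $\psi=h\ge c\,m_r$; intuitively a function vanishing on the boundary of a ball of radius $\sim r$ which reaches height $\sim m_r$ on a set of measure $|A|$ must have large gradient. Quantitatively, I would apply the Poincaré-type inequality of Lemma \ref{Poincare} to $\psi$ on $B^+_r(x_0)$ together with the properties $(G_1)$ of $\xi_0,\xi_1$. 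A clean way: extend $\psi$ by $0$ outside $B^+_r(x_0)$ and use that $\psi\ge c m_r$ on $A$; then
\[
\int_{B^+_r(x_0)}G(|\nabla\psi|)\,dx\ \gtrsim\ |B^+_r(x_0)|\;\intav{B^+_r(x_0)}G\!\Big(\tfrac{|\psi|}{\mathrm{diam}\,B^+_r(x_0)}\Big)dx^{?}
\]
— more carefully, one first shows $\intav{B^+_r}G(|\psi|/r)\,dx\ge \frac{|A|}{|B^+_r|}\,G(c m_r/r)\ge \frac{|A|}{|B^+_r|}\,\xi_0(c m_r/(C r))\,G(1)/(g_0+1)$ using $(G_1)$, and then uses the Poincaré inequality (the zero-trace version) which controls $\intav{B^+_r}G(|\psi|/\mathrm{diam})\,dx$ by $\big(\intav{B^+_r}(G(|\nabla\psi|))^\theta dx\big)^{1/\theta}$. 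A minor wrinkle is that Lemma \ref{Poincare} is stated on a generic domain $U$ (and for scalar $W^{1,G}$); I would apply it on $B^+_r(x_0)$, noting the half-ball is a John / $W^{1,G}$-extension domain so the constant depends only on $n,\delta,g_0$ (one can equivalently reflect $\psi$ oddly to a full ball, using $\psi=0$ on $D^+_r$, which comes from $w\ge 0$, $h=0$ on $D^+_r$). The reverse-Hölder exponent $1/\theta>1$ on the right is then absorbed: since we only want a lower bound on $\int G(|\nabla\psi|)$, I bound $\big(\intav (G(|\nabla\psi|))^\theta\big)^{1/\theta}\le \intav G(|\nabla\psi|)$ by Jensen (as $1/\theta\ge 1$), which goes the right direction. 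Finally $\nabla\psi=\nabla h-\nabla w$, so rearranging yields exactly
\[
\xi_0\!\Big(\tfrac1r\sup_{B^+_{r/2}(x_0)}h\Big)\,|B^+_r(x_0)\cap\{w=0\}|\ \le\ \mathrm{C}\int_{B^+_r(x_0)}G(|\nabla w-\nabla h|)\,dx,
\]
with $\mathrm{C}=\mathrm{C}(n,\delta,g_0)$.

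The step I expect to be the genuine obstacle is making the Poincaré/measure argument quantitatively sharp in the Orlicz setting — in the $p$-Laplacian case of \cite[Lemma 2.5]{FSS} one uses the homogeneity $G(t)=t^p$ freely, whereas here one must track how $\xi_0$ (rather than a pure power) enters when passing from $G(c m_r/r)$ on the sublevel set $A$ to $\xi_0(m_r/r)\,|A|$, and one must be careful that the reverse-Hölder exponent $1/\theta$ in Lemma \ref{Poincare} does not spoil the linear dependence on $|A|$. The fix is precisely to keep $|A|$ outside the nonlinear function: write the left side as $\xi_0(\cdot)\,|A|$, bound $\xi_0(\cdot)\lesssim \tfrac{|B^+_r|}{|A|}\intav_{B^+_r}G(|\psi|/r)\,dx$ using $G(c m_r/r)\ge \tfrac{|A|}{|B^+_r|}^{-1}\cdot(\ldots)$ — i.e. estimate $\intav G(|\psi|/r)\ge \tfrac{|A|}{|B^+_r|}G(cm_r/r)$ and then use $G(cm_r/r)\gtrsim \xi_0(m_r/r)$ via $(G_1)$ (taking $t=1$, $s=cm_r/r$) — so that the $|A|$ cancels cleanly and the only surviving inequality is $\int_{B^+_r}G(|\psi|/r)\lesssim \int_{B^+_r}G(|\nabla\psi|)$, which is the zero-trace Poincaré inequality. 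A secondary technical point, the validity of odd reflection across $D^+_r$, is harmless since $\psi=h-w$ vanishes on $D^+_r$ (because $h=0$ there and $w\ge 0$ with $w=h=0$ on $D^+_r$ in the trace sense), so the reflected function lies in $W^{1,G}$ of the full ball and Lemma \ref{Poincare} applies verbatim.
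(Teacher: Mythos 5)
Your route is genuinely different from the paper's. The paper argues ray by ray from $x_0$: for a.e.\ direction $z$ it takes the \emph{first} zero $t_z\in[\tau r,r]$ of $w$ along the ray, writes $h(x_0+t_zz)=\int_{t_z}^r z\cdot\nabla(w-h)(x_0+sz)\,ds$ using absolute continuity and the zero trace of $w-h$ on $\partial B^+_r(x_0)$, bounds this one-dimensional integral by an Orlicz--H\"older inequality in terms of $\xi_0^{-1}\big(\int_{t_z}^r G(|\nabla(w-h)|)\,ds\big)$ and $(r-t_z)^{g_0/(g_0+1)}$, bounds $h(x_0+t_zz)$ from below by Hopf, and then integrates over directions, using that the zero set along each ray beyond $\tau r$ has length at most $r-t_z$; letting $\tau\to0$ recovers the full measure. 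Your replacement of this by a global zero-trace Poincar\'e argument for $\psi=h-w$ has several sound ingredients: $\psi$ does have zero trace on all of $\partial B^+_r(x_0)$ (because $h=w$ there -- your assertion that $h=0$ on $D^+_r$ is not needed and not generally true), the Jensen absorption of the exponent $1/\theta$ in Lemma \ref{Poincare} goes in the right direction, and the conversion $G(c\,m_r/r)\gtrsim\xi_0(m_r/r)$ via $(G_1)$ is fine.

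The genuine gap is in the measure you capture. First, the claim that $\mathrm{dist}\big(x,\partial B^+_r(x_0)\big)\ge cr$ on $B^+_{r/4}(x_0)$ is false: $\partial B^+_r(x_0)$ contains the flat disc $D^+_r(x_0)$, so points of $B^+_{r/4}(x_0)$ with small $x_n$ are arbitrarily close to the boundary, and there Hopf's Lemma \ref{Hopf} only gives $h(x)\gtrsim x_n\,m_r/r$, not $h(x)\gtrsim m_r$. Second, and more seriously, even after correcting the good set to $A=\{x\in B^+_{r/2}(x_0)\cap\{w=0\}:\ \mathrm{dist}(x,\partial B^+_r(x_0))\ge cr\}$, your final inequality controls only $\xi_0(m_r/r)\,|A|$, whereas the lemma requires $|B^+_r(x_0)\cap\{w=0\}|$. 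The proposed ``dyadic loss absorbed into $\mathrm{C}$'' is not available: there is no comparison between the two measures, since $\{w=0\}$ may be concentrated near the spherical cap of $\partial B^+_r(x_0)$ or in a thin layer over $D^+_r(x_0)$, where your argument yields nothing. Keeping the degenerate Hopf weight instead gives only a $\xi_0\big(\mathrm{dist}(x,\partial B^+_r(x_0))\,m_r/r^2\big)$-weighted estimate; the contribution of a layer at distance $\epsilon r$ is discounted by roughly $\epsilon^{\,g_0+1}$ and does not recover the full measure as $\epsilon\to0$. This boundary-layer loss is precisely what the paper's ray-wise argument avoids: all of the zero set on a given ray past its first zero is charged to a single Hopf estimate at that first zero together with a one-dimensional gradient integral, so no interior-versus-boundary loss occurs. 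As written, your proposal proves a strictly weaker statement and would also not suffice for the way \eqref{Hopf-estimate} is used in Step One of Theorem \ref{thm:boundary-Lip}, where the density $\omega_r(z,\mathbf{u})$ over the full $\mathfrak{B}_r(z)$ is what must be controlled.
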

\begin{proof}
    Let $\tau \in (0,1)$ be fixed. For each $z\in \partial B^+_r(x_0)$, define $M_z \coloneqq \{t \in [\tau r, r] \; : \; w(x_0+tz)=0\}$, and set
        $$t_z \coloneqq \left\{
        \begin{array}{rc}
            \inf M_z,&\text{if}\; M_z\neq \varnothing; \\
            r,& \text{otherwise.}
        \end{array} \right.$$
    Since both $w$ and $(w-h)$ belong to $W^{1,G}(B^+_r(x_0))$, the maps $t \mapsto w(x_0+tz)$ and $t \mapsto (w-h)(x_0+tz)$ are absolutely continuous, for almost everywhere $z \in \partial B^+_r(x_0)$ (see, for instance, \cite[Theorem 4.20]{EG}). By continuity, it follows that $w(x_0+t_zz)=0$. Moreover, since $(w-h)$ is $\mathcal{H}^{n-1}$-a.e. zero on $\partial B^+_r(x_0)$, its trace being zero there, we also have $(w-h)(x_0+t_z z)=0$. Consequently, for almost every $z\in \partial B^+_r(x_0)$, and for each $t_z<r$ we obtain
        \begin{equation}\label{h0}
            h(x_0+t_zz)=\int_{t_z}^r z \cdot \nabla (w-h)(x_0+sz)\ ds.
        \end{equation}
    Applying Hölder’s inequality together with $(G_1)$, $(G_4)$, and the property of $A$, we find
        \begin{eqnarray}\label{nwh}
            \int_{t_z}^r |\nabla (w-h)(x_0+sz)|\ ds &\le& 2^{-1/2}\int_{t_z}^r |\nabla (w-h)(x_0+sz)\ A(x_0+sz)|\ ds\nonumber \\
            &\le&\mathrm{C} (r-t_z)^{\frac{g_0}{g_0+1}}\xi_0^{-1}\left(\int_{t_z}^r G\Big(|\nabla (w-h)(x_0+sz)\ A(x_0+sz)|\Big)\ ds\right),
        \end{eqnarray}
    where $\mathrm{C}=\mathrm{C}\left(\delta,g_0,\widetilde{G}(1)\right)>0$. On the other hand, using Hopf's Lemma \ref{Hopf}, we get
        \begin{equation}\label{h1}
            h(x_0+t_zz)\ge \mathrm{C}\mathrm{dist}\big(x_0+rx,\partial B^+_r(x_0)\big)\left(\dfrac{1}{r} \sup_{B^+_{\frac{r}{2}}(x_0)}\ h\right) = \mathrm{C} (r-t_z)\left(\dfrac{1}{r} \sup_{B^+_{\frac{r}{2}}(x_0)}\ h\right).
        \end{equation}
    By combining \eqref{h0}--\eqref{h1}, we deduce that
        $$(r-t_z)^{\frac{1}{g_0+1}}\left(\dfrac{1}{r} \sup_{B^+_{\frac{r}{2}}(x_0)}\ h\right) \le \mathrm{C}\ \xi_0^{-1}\left(\int_{t_z}^r G\Big(|\nabla (w-h)(x_0+sz)\ A(x_0+sz)|\Big)\ ds\right),$$
    and consequently 
        $$(r-t_z)\ \xi_0\left(\dfrac{1}{r} \sup_{B^+_{\frac{r}{2}}(x_0)}\ h\right) \le \mathrm{C}\int_{t_z}^r G\Big(|\nabla (w-h)(x_0+sz)\ A(x_0+sz)|\Big)\ ds.$$
    The last inequality is trivially satisfied if $t_z=r$. Therefore, by the same reason as in \cite[see Lemma 2.5]{FSS} we get
        \begin{eqnarray*}
            \mathrm{C}\int_{B^+_r(x_0)} G\Big(|\nabla (w-h)(x)\ A(x)|\Big) \ dx &\ge& \mathrm{C} \int_{\partial B^+_1}\int_{t_z}^r G\Big(|\nabla (w-h)(x_0+sz)\ A(x_0+sz)|\Big)\ ds \ d\mathcal{H}^{n-1}(z) \\
            &\ge& \xi_0\left(\dfrac{1}{r}\sup_{B^+_{\frac{r}{2}}(x_0)}h\right) \int_{\partial B^+_1}(r-t_z)\ d\mathcal{H}^{n-1}(z) \\
            &\ge& \xi_0\left(\dfrac{1}{r}\sup_{B^+_{\frac{r}{2}}(x_0)}h\right) \int_{B^+_r(x_0)\setminus B^+_{\tau r}(x_0)} \chi_{\{w=0\}}\ dx,
        \end{eqnarray*}
    and the desired conclusion follows by letting $\tau \to 0$.
\end{proof}

\section{Regularity near the boundary}

This section is devoted to recalling fundamental properties of $(\kappa,\beta)$-almost-minimizers and to establishing several auxiliary lemmas that will play a crucial role in the proof of our main results.

As a preliminary step, we recall some regularity results for almost-minimizers of the functional $\mathcal{J}_G$. These results were originally established in \cite{PSY} in the purely interior setting. To begin with, we introduce here and throughout the sequel the following notation. Considering the set
    $$P_{\bf u} \coloneqq \bigcup_{i=1}^m \{u_i>0\},$$
we understand the free boundary of an $(\kappa,\beta)$-almost-minimizer ${\bf u}$ by
    $$\mathfrak{F}(\mathbf{u}) \coloneqq \partial P_{\bf u} \cap \Omega.$$
Further, we will denote
    $$\mathfrak{B}_r(z)\coloneqq B_r(z)\cap\Omega, \quad \mathfrak{D}_r(z)\coloneqq B_r(z)\cap\partial\Omega, \quad  \mbox{and} \quad \omega_r(z,\mathbf{u}) \coloneqq\dfrac{|\mathfrak{B}_r(z)\cap \{|\mathbf{u}|=0\}|}{|\mathfrak{B}_r(z)|},$$
for any $z \in \mathbb{R}^n$, where $\{|\mathbf{u}|=0\}$ denotes the union of $\{u_i=0\}_{i=1}^m$. 

In order to apply the results established in Section \ref{Sec2.1}, namely Lemmas \ref{g-harmonic_control}, \ref{Hopf}, and \ref{L27}, we first describe the local flattening procedure of the boundary. To this end, let $x_0\in\partial\Omega$ be arbitrary and assume that $\Omega$ is of class $C^{1,\alpha}$ in a neighborhood of $x_0$. After a suitable affine transformation and rotation, we may assume without loss of generality that $x_0=0$ and that the inward unit normal vector to $\partial \Omega$ at $x_0$ satisfies $\nu_{\partial\Omega}(x_0)=e_n$. Under these assumptions, there exists a function $\mathfrak{f}:\mathbb{R}^{n-1}\to \mathbb{R}$ of class $C^{1,\alpha}$ such that $\mathfrak{f}(\mathbf{0})=0$, $\nabla\mathfrak{f}(\mathbf{0})=0$, and for some radius $R>0$ 
    $$\mathfrak{B}_R = \{x\in B_R \;\; : \;\; x_n>\mathfrak{f}(x')\},$$
where $x'=(x_1,\dots,x_{n-1})$.By choosing $R >0$ sufficiently small, we may further assume that
    $$|\nabla \mathfrak{f}(x')|< \dfrac{1}{2} \quad \mbox{for any}\ x' \in \mathbb{R}^{n-1} \ \mbox{with}\ |x'|<\sqrt{2}r.$$
Further, since $\partial\Omega$ is compact, we can choose a common $R>0$ that is suitable for all the boundary points.

We finally define the mapping $\Psi,\Psi^{-1}: \mathbb{R}^n \to \mathbb{R}^n$ by
    \begin{equation}\label{defPsi}
        \Psi(x)=\big(x',x_n-\mathfrak{f}(x')\big) \quad \mbox{and} \quad \Psi^{-1}(y)=\big(y',y_n+\mathfrak{f}(x')\big).
    \end{equation}
It is immediate to verify that $\Psi^{-1}$ is indeed the inverse of $\Psi$. Moreover, $\Psi$ locally flattens the boundary in the sense that
    $$\Psi(\mathfrak{D}_R) = D_R^+,$$
where $D_R^+$ denotes the flat boundary of the positive half-ball $B_R^+$. Furthermore, a direct computation shows that the following properties hold:
    \begin{itemize}
        \item $\mathrm{det} D\Psi = 1 = \mathrm{det} D\Psi^{-1}$;

        \item $\sqrt{\frac{1}{2}}|w| \le |D\Psi(x)w| \le \sqrt{\frac{3}{2}}|w|$ for any $w\in\mathbb{R}^n$ and $x \in B_{\sqrt{2}R}$;

        \item $B^+_{\frac{\rho}{\sqrt{2}}}\subset \Psi(\mathfrak{B}_\rho) \subset B^+_{\sqrt{2}\rho}$, for any $\rho \le \sqrt{2}R$.
    \end{itemize}

This construction allows us to establish the following equivalence between boundary value problems before and after flattening the boundary.

\begin{lemma}[{\bf Local flattening of the boundary}]  
    Assume that $G \in \mathcal{G}(\delta,g_0)$ and define the vector field $\mathfrak{a}:B_R^+\times\mathbb{R}^n \to \mathbb{R}^n$ as in \eqref{defvecfield-a} where, in this case, $A(x) \coloneqq D\Psi\big(\Psi^{-1}(x)\big)$, with $\Psi$ and $\Psi^{-1}$ defined in \eqref{defPsi}. If $u$ is a weak solution of
        $$\left\{ \begin{array}{rccl}
            -\mbox{div}\left(\dfrac{g(|\nabla u|)}{|\nabla u|}\nabla u\right) &=&0,& \mbox{in}\ \mathfrak{B}_R; \\
            u&=&\phi,& \mbox{in}\ \mathfrak{D}_R,
        \end{array}\right.$$
    for a given $\phi \in C^{1,\alpha}(\mathfrak{B}_R)$, then the function $\tilde{u}(y)=(u \circ\Psi^{-1})(y)$ is a weak solution of
        $$\left\{ \begin{array}{rccl}
            -\mbox{div}_y \ \mathfrak{a}(y,\nabla \tilde{u}) &=&0,& \mbox{in}\ B^+_R; \\
            \tilde{u}&=&\tilde{\phi},& \mbox{in}\ D^+_R,
        \end{array}\right.$$
    where $\tilde{\phi} \equiv \phi\circ\Psi^{-1} \in C^{1,\alpha}(B_R^+)$.

\end{lemma}
\begin{proof}
    Indeed, consider any $\varphi \in C_0^\infty(\mathfrak{B}_{\frac{R}{\sqrt{2}}})$ and define $\tilde{\varphi}= \varphi \circ \Psi^{-1} \in C_0^{1,\alpha}(B_R^+)$. Since $u$ is a weak solution, the definition of $\tilde{u}$, change variables, and the properties of $\Psi^{-1}$ yield
        \begin{eqnarray*}
            0&=& \int_{\mathfrak{B}_R} \dfrac{g(|\nabla u(x)|)}{|\nabla u(x)|}\nabla u(x) \nabla \varphi(x) \ dx\\
            &=& \int_{\mathfrak{B}_R} \dfrac{g(|\nabla \tilde{u}\big(\Psi(x)\big)|)}{|\nabla \tilde{u}\big(\Psi(x)\big)|}\nabla \tilde{u}\big(\Psi(x)\big) \nabla \tilde{\varphi}\big(\Psi(x)\big) \ dx \\ 
            &=& \int_{B^+_R} \dfrac{g(|\nabla \tilde{u}(y) \cdot D\Psi(\Psi^{-1}(y)\big)|)}{|\nabla \tilde{u}(y) \cdot D\Psi(\Psi^{-1}(y)\big)|}\nabla \tilde{u}(y) \cdot D\Psi(\Psi^{-1}(y)\big) \nabla \tilde{\varphi}(y) \cdot D\Psi^T(\Psi^{-1}(y)\big) \ dy\\
            &=& \int_{B^+_R} \dfrac{g(|\nabla \tilde{u}(y) \cdot A(y)|)}{|\nabla \tilde{u}(y) \cdot A(y)|}\nabla \tilde{u}(y) \cdot A(y) A^T(y) \nabla \tilde{\varphi}(y) \ dy.
        \end{eqnarray*}
    To obtain the previous identity for arbitrary test functions $\tilde{\varphi} \in C_0^\infty(B_R^+)$, it suffices to reverse the above change of variables argument, namely, to test the original problem with $\tilde{\varphi}\circ\Psi$. This yields the desired conclusion and completes the proof.
\end{proof}

\begin{remark}\label{remarkflattening}
    Let $\Psi$ be the flattening diffeomorphism as previously stated in \eqref{defPsi}. Then, by the change of variables $y=\Psi(x)$ and the chain rule, one has
        $$\int_{\mathfrak{B}_r(x_0)} G(|\nabla u(x)|)\ dx = \int_{B_r^+(x_0)}G(|\nabla \tilde{u}(y)\ A(y)|)\ dy.$$
    Consequently, all the results stated in Lemmas \ref{g-harmonic_control}, \ref{Hopf}, and \ref{L27} remain valid for any $g$-harmonic function $u$, after flattening the boundary, for the integral
        $$\int_{\mathfrak{B}_r(x_0)} G(|\nabla u(x)|)\ dx.$$
\end{remark}

The next two results were originally established only in the purely interior setting in \cite[Theorems 1.1 and 1.2]{PSY}. With the aid of Lemma \ref{g-harmonic_control}, we are now able to extend these results up to the boundary. Since the core of the argument follows the same strategy as in \cite{PSY}, we highlight here only the key differences; for the full proof, we refer the reader to \cite{PSY}. We begin with the boundary Hölder regularity of the almost-minimizer. From now on, given any scalar function $v$, we shall refer to $v^*$ as the $g$-harmonic replacement of $v$ in some open $U$, meaning the unique $g$-harmonic function in $U$ with the same trace as $v$ on $\partial U$.


\begin{proposition}[{\bf Hölder regularity of almost-minimizers}]\label{interiorreg}
    Assume that $G\in \mathcal{G}(\delta,g_0)$. Consider ${\bf u}=(u_1,\dots, u_m)$ a $(\kappa,\beta)$-almost-minimizer of $\mathcal{J}_G$ in $\Omega$, with some positive constant $\kappa \le \kappa_0$, exponent $\beta>0$, and the prescribed Lipschitz boundary value $\mathbf{\Phi}=(\phi_1,\dots,\phi_m)\in C^{0,1}(\Omega,\mathbb{R}^m)$. Then, ${\bf u}$ exhibits local $C^{0,\gamma}$-regularity in $\overline{\Omega}$, for any $0<\gamma<\frac{\delta}{g_0}$. More precisely, for any $x_0\in\overline{\Omega}$ and $r>0$, there exists a constant $\mathrm{C}=\mathrm{C}(n,m,\beta,\kappa_0,\delta,g_0)>0$ such that
        $$
        \| {\bf u}\|_{C^{0,\gamma}\big(\mathfrak{B}_r(x_0);\mathbb{R}^m\big)} \le \mathrm{C}G^{-1}\left( \sum_{i=1}^m \xi_1\Big(|\nabla u_i|_{L_G(\Omega)}\Big)+\lambda + \xi_1(\|\nabla \mathbf{\Phi}\|_\infty)\right),
        $$
    where $\xi_1$ is defined in $(G_1)$.
\end{proposition}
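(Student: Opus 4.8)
The plan is to follow the Campanato-type iteration scheme from \cite{PSY}, replacing the interior decay estimate for the $g$-harmonic replacement by its boundary analogue, namely Lemma \ref{g-harmonic_control}, after flattening the boundary. Fix $x_0 \in \overline{\Omega}$; by the local flattening remark it suffices to work in $B_R^+$ (the interior case $x_0 \in \Omega$ being already covered by \cite{PSY}), with the diffeomorphism absorbing its bounded distortion into the constants and transforming the prescribed Lipschitz data $\mathbf{\Phi}$ into a $W^{1,q}$ function for every finite $q$, in particular $q > g_0+1$. Working componentwise on each $u_i$, I would let $v_i$ be the solution of $-\mathrm{div}(g(|\nabla v_i|)\nabla v_i)=0$ in $B_r^+$ with $v_i = u_i$ on the curved part of $\partial B_r^+$ and $v_i = \phi_i$ (i.e. $= h$) on the flat part $D_r^+$. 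Comparing $\mathbf{u}$ with the competitor obtained by replacing $u_i$ with $v_i$ and using the almost-minimality inequality \eqref{VarIneqAlmMin}, together with the strong monotonicity/quasi-convexity estimate \eqref{SSMI} and the monotonicity control in Lemma \ref{monotonicity}, one obtains the standard "comparison estimate"
\[
\int_{B_\rho^+} G(|\nabla u_i - \nabla v_i|)\,dx \;\le\; \mathrm{C}\Big(\kappa r^\beta \int_{B_r^+} G(|\nabla u_i|)\,dx + \lambda\, r^n + (\text{data terms})\Big),
\]
the extra $\lambda\,|B_r^+|$ coming from the bulk term $\lambda\chi_{\{|\mathbf{u}|>0\}}$ and the data terms being controlled via $(G_1)$ and the $L^q$ bound on $\nabla h$ exactly as in \eqref{Gh}.

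Next, combining the triangle inequality $\int_{B_\rho^+} G(|\nabla u_i|) \le \mathrm{C}(\int_{B_\rho^+} G(|\nabla u_i-\nabla v_i|) + \int_{B_\rho^+} G(|\nabla v_i|))$ (using $(G_2)$) with the decay for the $g$-harmonic replacement supplied by Lemma \ref{g-harmonic_control}, one gets
\[
\Theta(\rho) \;\le\; \mathrm{C}\Big[\big(\tfrac{\rho}{R}\big)^{n(1-\frac{g_0+1}{q})} + \kappa r^\beta\Big]\Theta(r) \;+\; \mathrm{C}\,r^{n(1-\frac{g_0+1}{q})}\big(\lambda + \xi_1(\|\nabla h\|_{L^q})\big),
\qquad \Theta(t):=\int_{B_t^+} G(|\nabla u_i|)\,dx.
\]
Since $q$ may be taken arbitrarily large, the exponent $\tau := n(1-\tfrac{g_0+1}{q})$ can be made as close to $n$ as desired, so for any target $\theta < n$ we can arrange $\tau > \theta$; the perturbative term $\kappa r^\beta \to 0$ as $r\to 0$ so the hypothesis of the iteration Lemma \ref{interationlemma} (with $\varepsilon$ small) is met on a small enough scale. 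Lemma \ref{interationlemma} then yields $\int_{B_\rho^+} G(|\nabla u_i|)\,dx \le \mathrm{C}\,\rho^{\theta}\,\big(\mathrm{data}\big)$ for all small $\rho$, with $\theta$ arbitrarily close to $n$ — equivalently, any $\theta \in (n-\varepsilon, n)$.

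To pass from this Morrey-type decay of $\int G(|\nabla u_i|)$ to $C^{0,\gamma}$ regularity with $\gamma < \frac{\delta}{g_0}$, I would invoke the Poincaré inequality of Lemma \ref{Poincare} (in its half-ball form after flattening, noting $v - (v)_U$ may be replaced by $v$ for the zero-trace part) to control the mean oscillation of $u_i$ on $B_\rho^+$ by a power of $\int_{B_\rho^+} (G(|\nabla u_i|))^{\theta'}$; together with the lower bound $G(t) \ge \mathrm{C}\min\{t^{\delta+1}, t^{g_0+1}\}$ from $(G_1)$ and the decay just obtained, a standard Campanato-space characterization gives the oscillation bound $\mathrm{osc}_{B_\rho^+} u_i \le \mathrm{C}\rho^{\gamma}$ with $\gamma = \frac{\theta - (n - \delta - 1)}{\delta+1}$ or the like; optimizing over $\theta \nearer n$ pushes $\gamma$ up to any value below $\frac{\delta}{g_0}$ (the loss from $\delta$ to $g_0$ being the usual gap between the lower and upper growth exponents of $G$). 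Finally, summing the component bounds and tracking the dependence of all constants through $(G_1)$ gives the asserted estimate for $\|\mathbf{u}\|_{C^{0,\gamma}(\mathfrak{B}_r(x_0);\mathbb{R}^m)}$ with the right-hand side $\mathrm{C}\,G^{-1}(\sum_i \xi_1(|\nabla u_i|_{L_G(\Omega)}) + \lambda + \xi_1(\|\nabla\mathbf{\Phi}\|_\infty))$. I expect the main obstacle to be bookkeeping the constants and data terms uniformly in $x_0 \in \overline{\Omega}$ through the flattening diffeomorphism — in particular checking that the flattened equation stays in the class $\mathcal{G}(\delta,g_0)$ up to controlled perturbations of the coefficients, so that Lemma \ref{g-harmonic_control} applies with constants independent of the boundary point — rather than the iteration itself, which is routine once the comparison estimate is in place.
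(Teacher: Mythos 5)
Your proposal is correct and follows essentially the same route as the paper: flatten the boundary, take the $g$-harmonic replacement whose trace is $\phi_i$ on the flat part, combine the boundary decay of Lemma \ref{g-harmonic_control} with the comparison estimate coming from almost-minimality, iterate (the paper does the iteration by hand over scales $\tau^k$ and then lets $q\to\infty$ to recover $\|\nabla\mathbf{\Phi}\|_\infty$, while you invoke Lemma \ref{interationlemma} for a large fixed $q$ — same effect), and finish with a Morrey/Campanato-type embedding, which the paper simply cites as \cite[Lemma 2.1]{PSY} and you sketch via Lemma \ref{Poincare}. The one point to adjust is that your comparison estimate written for $\int G(|\nabla u_i-\nabla v_i|)$ tacitly uses \eqref{SSMII}, which requires $\delta\ge 1$, whereas the proposition assumes only $G\in\mathcal{G}(\delta,g_0)$; the paper avoids this by bounding $\int_{\mathfrak{B}_\rho}G(|\nabla u_i|)$ directly through $|\mathbf{V}_G(\nabla u_i)-\mathbf{V}_G(\nabla u_i^*)|^2$ using \eqref{SSMI} and Lemma \ref{monotonicity}, and your argument goes through verbatim with that substitution.
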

\begin{proof}
    Fix any $x_0 \in \overline{\Omega}$ and $r>0$ such that $B_r(x_0) \cap \partial \Omega \neq \varnothing$, since otherwise the result follows directly from \cite[Theorem 1.1]{PSY}. Let $u_i^*$ be the $g$-harmonic replacement of $u_i$ in $\mathfrak{B}_r(x_0)$. In particular, $u_i^* = \phi_i$ on the ``flat'' portion of the boundary $\mathfrak{D}_r(x_0)=\partial \mathfrak{B}_r(x_0)\cap\partial\Omega$. In this setting, Lemma \ref{g-harmonic_control} and Remark \ref{remarkflattening} guarantee that, for any $\rho \in (0,r]$,
        $$\sum_{i=1}^m\int_{\mathfrak{B}_\rho(x_0)}  G(|\nabla u_i|) \ dx \le \mathrm{C} \left(\dfrac{\rho}{r}\right)^{\tilde{n}}\sum_{i=1}^m\int_{\mathfrak{B}_r(x_0)}G(|\nabla u_i|)dx \\+ \mathrm{C}(\lambda+1)\xi_1\left(\|\nabla \phi_i\|_q\right)\ \rho^{\tilde{n}},$$
    where $\tilde{n}=n\left(1-\frac{g_0+1}{q}\right)$ and $q>g_0+1$ is such that $\nabla \mathbf{\Phi} \in L^q(\mathfrak{B}_r(x_0);\mathbb{R}^m)$, which is possible since $\mathbf{\Phi} \in C^{0,1}$.

    Next, set $\tau \in (0,1)$ and $\mathbf{V}_G$ as the excess function defined in \eqref{defVg}. Thus, by the previous inequality, properties of $\mathbf{V}_G$ and minimality of $u_i^*$ (see the proof of Theorem 1.1 in \cite{PSY} for details), we have
        \begin{eqnarray*}
            \sum_{i=1}^m \int_{\mathfrak{B}_{\tau r}(x_0)} G(|\nabla u_i|)\ dx &\le& \mathrm{C}\kappa r^\beta \sum_{i=1}^m \int_{\mathfrak{B}_{r}(x_0)} G(|\nabla u_i|)\ dx + \mathrm{C}\lambda r^{\tilde{n}} + \mathrm{C}\tau^{\tilde{n}} \sum_{i=1}^m \int_{\mathfrak{B}_{r}(x_0)} G(|\nabla u_i|)\ dx \\
            && + \xi_1\left(\|\nabla \mathbf{\Phi}\|_q\right)(r\tau)^{\tilde{n}},
        \end{eqnarray*}
    which produces
        \begin{eqnarray}
            \sum_{i=1}^m \int_{\mathfrak{B}_{\tau r}(x_0)} G(|\nabla u_i|)\ dx &\le& \tau^{\tilde{n}+\gamma -1}\Big( r^\beta \kappa_0 \mathrm{C}_* \tau^{1-\gamma-\tilde{n}} + \mathrm{C}_*\tau^{1-\gamma}\Big) \sum_{i=1}^m \int_{\mathfrak{B}_{r}(x_0)} G(|\nabla u_i|)\ dx \nonumber \\
            && + \xi_1\left(\|\nabla \mathbf{\Phi}\|_q\right)\Big(\lambda r^{\tilde{n}}+(r\tau)^{\tilde{n}}\Big), \nonumber
        \end{eqnarray}
    for any $\gamma \in (0,1)$ and some $\mathrm{C}_*>1$. Next, if we get $\tau \in (0,1)$ such that $\mathrm{C}_* \tau^{1-\gamma} \le 1/2$ and $0<r\le R_0\le1$ such that $R_0^\beta\mathrm{C}_*\kappa_0\tau^{1-\gamma+ 2\tilde{n}} \le 1/2$, the previous inequality becomes
        $$\sum_{i=1}^m \int_{\mathfrak{B}_{\tau r}(x_0)} G(|\nabla u_i|)\ dx \le\tau^{\tilde{n}+\gamma -1} \sum_{i=1}^m \int_{\mathfrak{B}_{r}(x_0)} G(|\nabla u_i|)\ dx  + \xi_1\left(\|\nabla \mathbf{\Phi}\|_q\right)\Big(\lambda r^{\tilde{n}}+(r\tau)^{\tilde{n}}\Big).$$
    Note that $R_0$ does depend on $q$, but in such a way that $0<R_0\mathrm{C}_*\kappa_0\tau^{1-\gamma-2n} \le 1/2$ when $q$ goes to $\infty$.
    
    Proceeding inductively, for any $k \in \mathbb{N}$
        \begin{eqnarray}\label{taukestimate}
            \sum_{i=1}^m \int_{\mathfrak{B}_{\tau^k r}(x_0)} G(|\nabla u_i|)\ dx &\le& \tau^{k(\tilde{n}+\gamma -1)} \sum_{i=1}^m \int_{\mathfrak{B}_{r}(x_0)} G(|\nabla u_i|)\ dx \nonumber \\
            && + \xi_1\left(\|\nabla \mathbf{\Phi}\|_q\right)r^{\tilde{n}}\left(\lambda\dfrac{\tau^{k(\tilde{n}+\gamma-1)}-\tau^{k\tilde{n}}}{\tau^{\tilde{n}+\gamma-1}-\tau^{\tilde{n}}} +\dfrac{\tau^{k(2\tilde{n}+\gamma-1)}-\tau^{k2\tilde{n}}}{\tau^{2\tilde{n}+\gamma-1}-\tau^{2\tilde{n}}}\right). 
        \end{eqnarray}
    Now, by the choices of $\tau$ and $R_0$ we get
        $$\tau^{\tilde{n}+\gamma-1}-\tau^{\tilde{n}} \ge \mathrm{C}(R_0,\beta,\gamma)\left(\mathrm{C}_*-\left(\dfrac{1}{\mathrm{C}_*}\right)^{\frac{\tilde{n}}{1-\gamma}}\right)\eqcolon \widetilde{C}_1,$$
    and
        $$\tau^{2\tilde{n}+\gamma-1}-\tau^{2\tilde{n}} \ge \mathrm{C}(R_0,\beta,\gamma)\left(\mathrm{C}_*-\left(\dfrac{1}{\mathrm{C}_*}\right)^{\frac{2\tilde{n}}{1-\gamma}}\right)\eqcolon \widetilde{C}_2.$$
    Note again that both $\widetilde{C}_1$ and $\widetilde{C}_2$ depend on $\tilde{n}$, and then depend on $q$. Nevertheless, when $q \to \infty$ they converge to some positive and finite constant.

    Hence, we may estimate \eqref{taukestimate} by
        $$\sum_{i=1}^m \int_{\mathfrak{B}_{\tau^k r}(x_0)} G(|\nabla u_i|)\ dx \le \tau^{k(\tilde{n}+\gamma -1)} \sum_{i=1}^m \int_{\mathfrak{B}_{r}(x_0)} G(|\nabla u_i|)\ dx + \dfrac{\xi_1\left(\|\nabla \mathbf{\Phi}\|_q\right)}{\widetilde{C}_1 + \widetilde{C}_2} \Big(\lambda (\tau^kr)^{\tilde{n}+\alpha-1} + (\tau^kr)^{2\tilde{n}+\alpha-1}\Big).$$
    Now, consider a natural number $k$ such that $\tau^{k+1}r\le s \le \tau^kr$. Then, the previous inequality and the same reasoning as in \cite{PSY} lead us to
        \begin{equation}\label{topassthelimit}
            \sum_{i=1}^m \int_{\mathfrak{B}_{s}(x_0)} G(|\nabla u_i|)\ dx \le \dfrac{\mathrm{C}}{R_0^{\tilde{n}+\gamma-1}}\left(\dfrac{s}{r}\right)^{\tilde{n}+\gamma-1}\sum_{i=1}^m \int_{\mathfrak{B}_{r}(x_0)} G(|\nabla u_i|)\ dx + \dfrac{\mathrm{C}}{\widetilde{C}_1+\widetilde{C}_2} \xi_1\left(\|\nabla \mathbf{\Phi}\|_q\right) (\lambda+1) s^{\tilde{n}+\gamma-1},
        \end{equation}
    for any $0<s<r\le1$. Passing to the limit of $q \to \infty$ in \eqref{topassthelimit} we obtain
        $$\sum_{i=1}^m \int_{\mathfrak{B}_{s}(x_0)} G(|\nabla u_i|)\ dx \le \mathrm{C}\left(\dfrac{s}{r}\right)^{n+\gamma-1}\sum_{i=1}^m \int_{\mathfrak{B}_{r}(x_0)} G(|\nabla u_i|)\ dx + \mathrm{C} \xi_1\left(\|\nabla \mathbf{\Phi}\|_\infty\right) (\lambda+1) s^{n+\gamma-1}.$$
    As a consequence, applying a standard covering argument, we obtain for any $\gamma \in (0,1)$
        \begin{equation}\label{Remarkone}
            \sum_{i=1}^m \intav{\mathfrak{B}_r(x_0)}G(|\nabla u_i|)\ dx \le \mathrm{C} \left( \sum_{i=1}^m \int_{\Omega} G(|\nabla u_i|)\ dx + \lambda +\xi_1\left(\|\nabla \mathbf{\Phi}\|_\infty\right) \right)r^{\gamma-1},
        \end{equation}
    where $\mathrm{C}>0$ depends on $n,m,\delta,g_0,\beta$ and $\kappa_0$. The conclusion follows by choosing $\gamma \in \left(0,\frac{\delta}{g_0}\right)$ and invoking \cite[Lemma 2.1]{PSY}.
\end{proof}

Next, we turn to a H\"{o}lder estimate for the gradient, valid away from the free boundary. As in the previous case, this type of result was proved in \cite[see Theorem 1.2]{PSY} only in the purely interior setting and under stronger assumptions on the ellipticity constants $\delta$ and $g_0$. Here, we present a more general and less restrictive formulation, which remains valid up to the boundary, as follows.

\begin{theorem}[{\bf $C^{1, \alpha}$ of almost-minimizers}]\label{gradHold}
     Suppose $G\in \mathcal{G}(\delta,g_0)$, and let ${\bf u} = (u_1, \dots, u_m)$ be a $(\kappa,\beta)$-almost-minimizer of $\mathcal{J}_G$ in $\Omega$, with some positive constant $\kappa \le \kappa_0$, exponent $\beta > 4\left(1-\frac{\delta}{g_0}\right)>0$, and the prescribed Lipschitz boundary value $\mathbf{\Phi}=(\phi_1,\dots,\phi_m)\in C^{0,1}(\Omega,\mathbb{R}^m)$. Then, ${\bf u}$ exhibits local $C^{1,\tilde{\mu}}$-regularity in points of $\overline{\Omega}$ which does not belong to $\overline{\mathfrak{F}(\mathbf{u})}$. More precisely, for any $x_0 \in \overline{\Omega} \setminus \overline{\mathfrak{F}(\mathbf{u})}$ and $\mathfrak{B}_r(x_0) \subset P_{\mathbf{u}}$, there exists an exponent $\tilde{\mu} = \tilde{\mu}(\delta, g_0, n, \beta) > 0$ and a constant $\mathrm{C} = \mathrm{C}(\delta, g_0, n, m, \kappa_0, \beta) > 0$ such that
        $$\|{\bf u}\|_{C^{1,\tilde{\mu}}(\mathfrak{B}_r(x_0);\mathbb{R}^m)} \le \mathrm{C}\left( \sum_{i=1}^m\int_\Omega G\big(\vert\nabla u_i\vert\big) \ dx + \lambda + \xi_1(\|\nabla \mathbf{\Phi}\|_\infty)\right).$$
\end{theorem}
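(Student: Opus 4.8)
The plan is to follow the same scheme used for the Hölder regularity of the gradient in the interior case \cite[Theorem 1.2]{PSY}, but carried out on the flattened half-ball $B^+_r(x_0)$ and using Lemma \ref{g-harmonic_control} in place of its interior counterpart. The starting point is that, since $\mathfrak{B}_r(x_0) \subset P_{\mathbf{u}}$, we are away from the free boundary, so the term $\lambda \chi_{\{|\mathbf{u}|>0\}}$ is constant on every small ball contained in $\mathfrak{B}_r(x_0)$. Consequently the almost-minimality inequality \eqref{VarIneqAlmMin} reduces to a genuine almost-minimality property for the purely gradient energy $\mathbf{v} \mapsto \sum_i \int G(|\nabla v_i|)\,dx$: comparing $\mathbf{u}$ with its $g$-harmonic replacement $\mathbf{u}^*$ in $\mathfrak{B}_\rho(x_0)$, one gets $\sum_i \int_{\mathfrak{B}_\rho}\big(G(|\nabla u_i|)-G(|\nabla u_i^*|)\big)\,dx \le \kappa \rho^\beta\,\mathcal J_G(\mathbf{u};\mathfrak{B}_\rho)$, and by Proposition \ref{interiorreg} and \eqref{Remarkone} the right-hand side is controlled by $\mathrm{C}\rho^{\beta+n-1+\gamma}$ times the global energy plus lower order terms, hence it is a higher power of $\rho$ than the natural decay rate of the linear problem.

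Next I would use the excess-decay estimate for the $g$-harmonic replacement. By interior and boundary $C^{1,\alpha_0}$ regularity for $g$-harmonic functions (Lieberman's estimates, applied after the odd reflection trick as in Lemma \ref{g-harmonic_control}), one has for $u_i^*$ a decay of the form $\int_{\mathfrak{B}_{\tau r}} |\mathbf{V}_G(\nabla u_i^*) - (\mathbf{V}_G(\nabla u_i^*))_{\mathfrak{B}_{\tau r}}|^2\,dx \le \mathrm{C}\tau^{n+2\alpha_0} \int_{\mathfrak{B}_r}|\mathbf{V}_G(\nabla u_i^*)-(\cdots)_{\mathfrak{B}_r}|^2\,dx$. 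Combining this with the comparison estimate controlling $\int_{\mathfrak{B}_r}|\mathbf{V}_G(\nabla u_i)-\mathbf{V}_G(\nabla u_i^*)|^2\,dx$ through Lemma \ref{monotonicity} and \eqref{SSMI}, and using the quasi-triangle inequality for $\mathbf{V}_G$, yields an inequality of the form
\[
\Phi(\tau r) \le \mathrm{C}\big(\tau^{n+2\alpha_0} + \varepsilon\big)\Phi(r) + \mathrm{C}\, r^{\theta}, \qquad \Phi(r) \coloneqq \int_{\mathfrak{B}_r(x_0)}\big|\mathbf{V}_G(\nabla u_i) - (\mathbf{V}_G(\nabla u_i))_{\mathfrak{B}_r(x_0)}\big|^2\,dx,
\]
with $\theta = \beta + n - 1 + \gamma$; here the condition $\beta > 4\big(1-\tfrac{\delta}{g_0}\big)$ is exactly what guarantees $\theta$ exceeds the borderline exponent $n$ after absorbing the losses coming from the nonlinear structure of $\mathbf{V}_G$ (the factor $4$ and the $1-\delta/g_0$ gap reflecting the discrepancy between $G(|\nabla u|)$ and $|\mathbf{V}_G(\nabla u)|^2$ in $(g_3)$, applied twice in the two reflections/comparisons). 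Applying the iteration Lemma \ref{interationlemma} gives Campanato-type decay $\Phi(\rho) \le \mathrm{C}\big(\tfrac{\rho}{r}\big)^{n+2\tilde\mu}\Phi(r) + \mathrm{C}\rho^{n+2\tilde\mu}$ for some small $\tilde\mu>0$, which by the isomorphism $\nabla u_i \mapsto \mathbf{V}_G(\nabla u_i)$ (and $(g_3)$, $\delta$-ellipticity) translates into $\mathbf{V}_G(\nabla u_i) \in C^{0,\tilde\mu}_{\mathrm{loc}}$, hence $\nabla u_i \in C^{0,\tilde\mu'}_{\mathrm{loc}}$ up to the flat boundary. Undoing the flattening diffeomorphism $\Psi$ (whose $C^{1,\alpha}$ bounds are controlled, costing only a possibly smaller Hölder exponent) gives the $C^{1,\tilde\mu}$ estimate on $\mathfrak{B}_r(x_0)$, and tracking the constants through Proposition \ref{interiorreg} produces the stated bound in terms of $\sum_i \int_\Omega G(|\nabla u_i|)\,dx + \lambda + \xi_1(\|\nabla\mathbf{\Phi}\|_\infty)$.

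The main obstacle I anticipate is the boundary excess-decay for the $g$-harmonic replacement: unlike in the scalar $p$-Laplace case, one cannot simply invoke a known $C^{1,\alpha}$ boundary estimate, and the odd-reflection argument used in Lemma \ref{g-harmonic_control} must be combined carefully with Lieberman's regularity so that it yields control of the full gradient (including the normal derivative) and not merely of the energy $\int G(|\nabla w|)$ — one needs the stronger Campanato seminorm decay of $\mathbf{V}_G(\nabla w)$ near $D^+_r$, with the boundary datum $\phi_i \in C^{1,\alpha}$ entering as an inhomogeneity that must be subtracted off (as $w = v - h$ in Lemma \ref{g-harmonic_control}) and whose $C^{1,\alpha}$-norm propagates into the error term. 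A second delicate point is bookkeeping the exponent losses: each passage through $(g_3)$ and Lemma \ref{monotonicity} converts between $G(|\nabla u|)$ and $|\mathbf{V}_G(\nabla u)|^2$ at the cost of a factor involving $\delta/g_0$, and one must verify that the hypothesis $\beta > 4(1-\delta/g_0)$ leaves enough room for the iteration lemma to apply, i.e. that the resulting $\theta$ is strictly larger than the decay exponent $n+2\alpha_0$ after all absorptions — this is where the precise constant $4$ is pinned down.
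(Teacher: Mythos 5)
Your outline shares the paper's skeleton --- compare $u_i$ with its $g$-harmonic replacement $u_i^*$ in $\mathfrak{B}_r(x_0)$, control the comparison error through almost-minimality and \eqref{Remarkone}, and conclude via Campanato embedding plus the H\"older continuity of $\mathbf{V}_G^{-1}$ --- but the decay mechanism you rely on is different from the paper's, and it is exactly where your argument has a genuine gap. Your iteration hinges on a boundary excess-decay of the form $\int_{\mathfrak{B}_{\tau r}}|\mathbf{V}_G(\nabla u_i^*)-(\mathbf{V}_G(\nabla u_i^*))_{\mathfrak{B}_{\tau r}}|^2\,dx\le \mathrm{C}\,\tau^{n+2\alpha_0}\int_{\mathfrak{B}_{r}}|\mathbf{V}_G(\nabla u_i^*)-(\mathbf{V}_G(\nabla u_i^*))_{\mathfrak{B}_{r}}|^2\,dx$ for the replacement. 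Nothing in the paper provides this: Lemma \ref{g-harmonic_control} only yields decay of the energy $\int G(|\nabla u_i^*|)$ with exponent $n\left(1-\frac{g_0+1}{q}\right)<n$; odd reflection is unavailable because $u_i^*=\phi_i\not\equiv 0$ on the flat portion; and since the theorem assumes only $\mathbf{\Phi}\in C^{0,1}$, a Campanato decay with exponent strictly above $n$ at the flat boundary is false in general (already a harmonic function with Lipschitz boundary data need not have a continuous gradient up to the boundary, and the best inhomogeneity you could insert is of order $r^{n}\xi_1(\|\nabla\mathbf{\Phi}\|_\infty)$, which stalls the iteration at the borderline exponent $n$, i.e.\ BMO-type information rather than H\"older continuity of $\nabla u_i$ up to $\partial\Omega$). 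You flag this yourself as the ``main obstacle'' and tacitly upgrade the data to $C^{1,\alpha}$, but you never produce the estimate; thus the key inequality $\Phi(\tau r)\le \mathrm{C}(\tau^{n+2\alpha_0}+\varepsilon)\Phi(r)+\mathrm{C}r^{\theta}$ is asserted rather than proved, and it is the heart of the proof.

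The paper is organized precisely so as not to need that ingredient. In \eqref{VG32}--\eqref{VG33} it uses only the crude bound of the oscillation of $\mathbf{V}_G(\nabla u_i^*)$ by the averaged energy, the energy decay of Lemma \ref{g-harmonic_control}, the Morrey-type decay \eqref{Remarkone} of the energy of $\mathbf{u}$ itself (from Proposition \ref{interiorreg}), and the almost-minimality comparison of size $\kappa r^{\beta}\cdot r^{\alpha-1}$ in averaged form; the two resulting error terms, $(r/s)^n r^{\beta+\alpha-1}$ and $(s/r)^n r^{\alpha-1}$, are then balanced by evaluating at the interpolating scale $s=r^{1+\theta}$ with $\theta=\beta/(2n)$. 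It is this balancing, with $\alpha$ chosen close to $\delta/g_0$, that produces the requirement $\beta>4\left(1-\frac{\delta}{g_0}\right)$; your heuristic that the factor $4$ reflects ``two passages through $(g_3)$/two reflections'' does not correspond to the actual mechanism. To make your route work you would either have to prove a genuine $C^{1,\alpha_0}$ boundary excess-decay for $g$-harmonic functions with non-constant boundary data (requiring more regularity of $\mathbf{\Phi}$ than the theorem assumes and a tool not developed in the paper), or replace that step by the paper's scale-interpolation argument, which needs only the Morrey-level estimates already at hand.
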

\begin{proof}
    First, consider any $x_0 \in \overline{\Omega}\setminus \overline{\mathfrak{F}(u)}$ and $B_r(x_0)\cap\Omega \subset P_{\mathbf{u}}$, where $0<r<1$ without loss of generality. Let us assume that $B_r(x_0) \cap \partial \Omega \neq \varnothing$. Otherwise, the result follows directly from \cite[Theorem 1.2]{PSY}. 
    
    For any $i\in \{1,\dots,m\}$, let $u_i^*$ be the $g$-harmonic replacement of $u_i$ in $\mathfrak{B}_r(x_0)$. In particular, $u_i^* = \phi_i$ on the ``flat'' boundary $\mathfrak{D}_r(x_0)=\partial \mathfrak{B}_r(x_0)\cap\partial\Omega$. In this case, Lemma \ref{g-harmonic_control} and Remark \ref{remarkflattening} ensure
        $$\intav{\mathfrak{B}_s(x_0)} G(|\nabla u_i^*|)\ dx \le \mathrm{C}  \left(\left(\dfrac{s}{r}\right)^n\intav{\mathfrak{B}_r(x_0)}G(|\nabla u_i^*|)\ dx + s^n \xi_1(\|\nabla \phi_i\|_\infty) \right),$$
    for any $s \in (0,r]$. Hence, the very definition of $\mathbf{V}_G$, the minimality of $u_i^*$ in the $G$-energy sense, and \eqref{Remarkone} yield
        \begin{eqnarray}\label{VG32}
            \intav{\mathfrak{B}_s(x_0)} \Big|\mathbf{V}_G(\nabla u_i^*) - \big(\mathbf{V}_G(\nabla u_i^*)\big)_{\mathfrak{B}_s(x_0)}\Big|^2 \ dx &\le& \mathrm{C}\intav{\mathfrak{B}_s(x_0)} G(|\nabla u_i^*|)\ dx \nonumber \\ 
            &\le& \mathrm{C}  \left(\left(\dfrac{s}{r}\right)^n\intav{\mathfrak{B}_r(x_0)}G(|\nabla u_i^*|)\ dx + s^n \xi_1(\|\nabla {\phi_i}\|_\infty) \right) \nonumber \\
            &\le& \mathrm{C} \left[\left(\dfrac{s}{r}\right)^n\left(\int_\Omega G(|\nabla u_i|)\ dx +\lambda\right)r^{\alpha-1} + s^n \xi_1(\|\nabla {\phi_i}\|_\infty)\right] \nonumber \\
            &\le& \mathrm{C}\left(\dfrac{s}{r}\right)^n \left(\int_\Omega G(|\nabla u_i|)\ dx + \lambda + \xi_1(\|\nabla \phi_i\|_\infty)\right)r^{\alpha-1},
        \end{eqnarray}
    for any $0<s<r<1$. Furthermore, by a reasoning similar to that in \cite[Theorem 1.2]{PSY}, we have
        \begin{eqnarray}\label{VG33}
            \intav{\mathfrak{B}_{s}(x_0)} \big|\big(\mathbf{V}_G(\nabla u_i^*)\big)_{\mathfrak{B}_{s}(x_0)} - \big(\mathbf{V}_G(\nabla u_i)\big)_{\mathfrak{B}_{s}(x_0)}\big|^2 dx &\le& \mathrm{C} \intav{\mathfrak{B}_{s}(x_0)}\big|\mathbf{V}_G(\nabla u_i^*) - \mathbf{V}_G(\nabla u_i)\big|^2 dx \nonumber \\
            &\le& \mathrm{C}\kappa r^\beta \left( \int_\Omega G \big(|\nabla u_i|\big)\ dx + \lambda\right) r^{\alpha-1}.
        \end{eqnarray}  
    Therefore, by combining \eqref{VG32} and \eqref{VG33}, we obtain
        \begin{eqnarray}
            \sum_{i=1}^m\intav{\mathfrak{B}_{s}(x_0)}\big|\mathbf{V}_G(\nabla u_i) - \big(\mathbf{V}_G(\nabla u_i)\big)_{\mathfrak{B}_{s}(x_0)}\big|^2 dx &\le& \sum_{i=1}^m\intav{\mathfrak{B}_{s}(x_0)}\big|\mathbf{V}_G(\nabla u_i) - \mathbf{V}_G(\nabla u_i^*)\big|^2 dx \nonumber \\
            &&+ \sum_{i=1}^m\intav{\mathfrak{B}_{s}(x_0)}\big|\mathbf{V}_G(\nabla u_i^*) - \big(\mathbf{V}_G(\nabla u_i^*)\big)_{\mathfrak{B}_{s}(x_0)}\big|^2 dx \nonumber \\
            &&+ \sum_{i=1}^m\intav{\mathfrak{B}_{s}(x_0)}\big|\big(\mathbf{V}_G(\nabla u_i^*)\big)_{\mathfrak{B}_{s}(x_0)} - \big(\mathbf{V}_G(\nabla u_i)\big)_{\mathfrak{B}_{s}(x_0)}\big|^2 dx \nonumber \\
            &\le& \mathrm{C} \left(\sum_{i=1}^m \int_\Omega G \big(|\nabla u_i|\big)\ dx + \lambda + \xi_1(\|\nabla \mathbf{\Phi}\|_\infty)\right) \nonumber\\
            && \hspace{3.5cm}\cdot\left( \left(\dfrac{r}{s}\right)^n r^{\beta+\alpha-1} + \left(\dfrac{s}{r}\right)^n r^{\alpha-1}\right), \nonumber
        \end{eqnarray}
    for any $0<s<r<1$. Finally, choosing $s=r^{1+\theta}$, with $\theta \coloneqq \frac{\beta}{2n}$ we observe that
        \begin{eqnarray*}
            \left(\dfrac{r}{s}\right)^n r^{\beta+\alpha-1} + \left(\dfrac{s}{r}\right)^n r^{\alpha-1} &=& s^{\frac{-\theta n +\beta+\alpha-1}{1+\theta}} + s^{\frac{\theta n +\alpha-1}{1+\theta}}  \\
            &=& 2s^{\frac{\frac{\beta}{2}+\alpha-1}{1+\theta}}.
        \end{eqnarray*}
    Since by hypothesis $\beta>4\left(1-\frac{\delta}{g_0}\right)$, we may choose $\alpha$ close enough to $\frac{\delta}{g_0}$ such that $\beta > 4(1-\alpha)$, which ensures 
        $$\dfrac{1-\alpha}{1+\theta} < \dfrac{\frac{\beta}{2}+\alpha-1}{1+\theta},$$
    hence
        $$\sum_{i=1}^m\intav{\mathfrak{B}_{s}(x_0)}\big|\mathbf{V}_G(\nabla u_i) - \big(\mathbf{V}_G(\nabla u_i)\big)_{\mathfrak{B}_{s}(x_0)}\big|^2 dx \le \mathrm{C} \left(\sum_{i=1}^m \int_\Omega G \big(|\nabla u_i|\big)\ dx + \lambda + \xi_1(\|\nabla \mathbf{\Phi}\|_\infty)\right) s^{\mu},$$
    where $\mu = \frac{1-\alpha}{1+\theta} \in (0,1)$. Next, the Morrey and Campanato space embedding theorem (see, for example, \cite[Chapter 5]{G}) implies that $\mathbf{V}_G(\nabla u_i)$ is $C^{0,\mu}$-regular in $\mathfrak{B}_r(x_0)$. Since $\mathbf{V}_G^{-1}$ is H\"{o}lder continuous (see, for instance, \cite[Lemma 2.10]{DSV}), this implies that $\nabla \mathbf{u}$ is $C^{0,\tilde{\mu}}$-regular in $\mathfrak{B}_r(x_0)$, for some $\tilde{\mu}\in(0,1)$, and the result follows.
\end{proof}

As a direct consequence of the previous result, we have the following:

\begin{corollary}\label{MainCorollary}
    Suppose $G\in \mathcal{G}(\delta,g_0)$, and let ${\bf u} = (u_1, \dots, u_m)$ be a $(\kappa,\beta)$-almost-minimizer of $\mathcal{J}_G$ in $\Omega$, with some positive constant $\kappa \le \kappa_0$, $\beta > 4\left(1-\frac{\delta}{g_0}\right)>0$, and the prescribed Lipschitz boundary value $\mathbf{\Phi}=(\phi_1,\dots,\phi_m)\in C^{0,1}(\Omega,\mathbb{R}^m)$. Assume that $\mathfrak{B}_1({\bf 0}) \subset P_{\mathbf{u}}$. Then,
        $$
        |\nabla {\bf u}({\bf 0})|\le \mathrm{C}\cdot \Big( \xi_1\big(\|{\bf \nabla u}\|_{L^G(B_1({\bf0}),\mathbb{R}^m)}\big) + \xi_1(\|\nabla \mathbf{\Phi}\|_\infty) + \lambda\Big),
        $$
    for a universal constant $\mathrm{C}=\mathrm{C}(n,m,\kappa_0,\beta,\delta,g_0)>0$.
\end{corollary}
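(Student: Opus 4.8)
The plan is to deduce Corollary \ref{MainCorollary} directly from the $C^{1,\tilde\mu}$-estimate of Theorem \ref{gradHold}. Since $\mathfrak{B}_1(\mathbf{0})\subset P_{\mathbf u}$, the point $\mathbf{0}$ does not belong to $\overline{\mathfrak{F}(\mathbf u)}$, so Theorem \ref{gradHold} applies on a ball of radius, say, $r=\tfrac12$ (or $r=1$, shrinking slightly if needed so that the closure is still contained in the positive phase). First I would invoke the theorem to get
\[
\|\mathbf u\|_{C^{1,\tilde\mu}(\mathfrak{B}_{1/2}(\mathbf 0);\mathbb{R}^m)} \le \mathrm{C}\left(\sum_{i=1}^m\int_{\Omega} G(|\nabla u_i|)\,dx + \lambda + \xi_1(\|\nabla\mathbf{\Phi}\|_\infty)\right),
\]
and in particular, since the $C^{1,\tilde\mu}$-norm dominates $\sup|\nabla\mathbf u|$, we bound $|\nabla\mathbf u(\mathbf 0)|$ by the same right-hand side.

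The only remaining point is cosmetic: the statement of the corollary is phrased with $\xi_1\big(\|\nabla\mathbf u\|_{L^G(B_1(\mathbf 0);\mathbb{R}^m)}\big)$ rather than with $\sum_i\int_\Omega G(|\nabla u_i|)\,dx$. Here I would use property $(G_1)$ from Statement \ref{Statement}, which gives $\int_{B_1(\mathbf 0)} G(|\nabla u_i|)\,dx \le \xi_1\big(|\nabla u_i|_{L_G(B_1(\mathbf 0))}\big)$, together with $(G_2)$ to pass from the componentwise sum to a bound in terms of $\||\nabla\mathbf u|\|_{L^G(B_1(\mathbf 0))}$ (the norm of the full gradient), absorbing the factor $m$ and the doubling constant into $\mathrm{C}$. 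More precisely, $\sum_{i=1}^m\int_{B_1(\mathbf 0)}G(|\nabla u_i|)\,dx \le \mathrm{C}(m,g_0)\int_{B_1(\mathbf 0)}G(|\nabla\mathbf u|)\,dx \le \mathrm{C}(m,g_0)\,\xi_1\big(\||\nabla\mathbf u|\|_{L^G(B_1(\mathbf 0))}\big)$, which is exactly the quantity appearing in the corollary. Since the theorem's estimate is stated with the integral over all of $\Omega$ but the proof localizes to $\mathfrak{B}_r(x_0)$ and only uses the restriction of $\mathbf u$ to a neighbourhood of $x_0$ (via \eqref{Remarkone}, which can equally be read on $B_1(\mathbf 0)$), one may replace $\int_\Omega$ by $\int_{B_1(\mathbf 0)}$ throughout when $\mathfrak{B}_1(\mathbf 0)\subset P_{\mathbf u}$; alternatively one simply records the corollary in its localized form.

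I do not expect any genuine obstacle here: the corollary is a straightforward repackaging of Theorem \ref{gradHold} evaluated at a single interior point of the positive phase, combined with the elementary norm–modular comparisons $(G_1)$–$(G_2)$. The only mild care needed is to make sure the constant $\mathrm{C}$ depends only on the advertised parameters $n,m,\kappa_0,\beta,\delta,g_0$ — which is the case, since the $C^{1,\tilde\mu}$-constant from Theorem \ref{gradHold} has exactly this dependence and the $\xi_1$/doubling constants depend only on $\delta,g_0,m$.
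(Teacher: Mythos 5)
Your proposal is correct and matches the paper's intent: the corollary is stated there as a direct consequence of Theorem \ref{gradHold}, obtained exactly as you do by evaluating the $C^{1,\tilde\mu}$-estimate at $\mathbf{0}$ and repackaging the right-hand side via $(G_1)$ and monotonicity of $G$. Your one substantive remark — that the estimate \eqref{Remarkone}, and hence the theorem, localizes to $B_1(\mathbf{0})$ because the almost-minimality is imposed on every ball and the iteration starting from radius $1$ at $\mathbf{0}$ only sees data in $\mathfrak{B}_1(\mathbf{0})$ — is precisely the point that makes the localized statement (needed later for the rescaling in Step two of Theorem \ref{thm:boundary-Lip}) legitimate, and you handle it correctly.
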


Next, we establish three auxiliary technical results that will play a crucial role in deriving the linear growth of almost-minimizers.

\begin{lemma}\label{Linffctrl}
    Suppose that $\Omega$ is a $C^1$-domain in $\mathbb{R}^n$. Furthermore, consider $G \in \mathcal{G}(\delta,g_0)$, with $\delta>1$, $\mathbf{u} = (u_1,\dots,u_m)$ a $(\kappa,\beta)$-almost-minimizer of $\mathcal{J}_G$ in $\Omega$, with some positive constant $\kappa \le \kappa_0$ and exponent $\beta>0$, and the prescribed Lipschitz boundary value $\mathbf{\Phi}\in C^{0,1}(\Omega,\mathbb{R}^m)$. Define $u^*_i$ to be the $g$-harmonic replacement of $u_i$ in $\mathfrak{B}_r(z)$, $z \in \overline{\mathfrak{F}(\mathbf{u})}$, and $\mathbf{v}=(u^*_1,\dots,u^*_m)$. Then,
    \begin{equation}\label{Bound-Lemma3.1}
        \displaystyle\|\mathbf{u}-\mathbf{v}\|_{L^\infty\left(\mathfrak{B}_{\frac{r}{2}}(z),\mathbb{R}^m\right)} \le \mathrm{C} \left(r^{\frac{\delta\left(\beta+n+\frac{\delta}{g_0}+g_0\right)}{g_0n+\delta(g_0+1)}}+r^{\frac{\delta\left(n+g_0+1\right)}{g_0n+\delta(g_0+1)}}\omega_r(z,\mathbf{u})^{\frac{\delta}{g_0n+\delta(g_0+1)}}\right),
        \end{equation}
    for any $\gamma \in \left(0,\frac{\delta}{g_0}\right)$, and $\mathrm{C}=\mathrm{C}(n,m,\delta,g_0,\kappa_0,\beta,\|\nabla \mathbf{\Phi}\|_\infty,\lambda,\|\nabla \mathbf{u}\|_{L^G(\Omega)})>0$.
\end{lemma}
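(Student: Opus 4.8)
The plan is to estimate $\|\mathbf u-\mathbf v\|_{L^\infty(\mathfrak B_{r/2}(z))}$ coordinatewise, controlling each $\|u_i-u_i^*\|_{L^\infty}$ by interpolating between an $L^p$-type bound on $u_i-u_i^*$ and a gradient bound on $u_i^*$, and then feeding in the almost-minimality inequality together with the auxiliary estimates already at our disposal. First I would record that $u_i-u_i^*\in W^{1,G}_0(\mathfrak B_r(z))$, so by the Poincar\'e inequality of Lemma~\ref{Poincare} (applied on the flattened half-ball, using the Remark on local flattening) and the complementarity/monotonicity estimates \eqref{SSMI}--\eqref{SSMII} and Lemma~\ref{monotonicity}, one gets
\[
\int_{\mathfrak B_r(z)} G(|\nabla u_i-\nabla u_i^*|)\,dx \;\le\; \mathrm C\int_{\mathfrak B_r(z)}|\mathbf V_G(\nabla u_i)-\mathbf V_G(\nabla u_i^*)|^2\,dx,
\]
and the right-hand side is bounded, via the $g$-harmonicity of $u_i^*$ (testing with $u_i-u_i^*$) and the almost-minimality \eqref{VarIneqAlmMin}, by $\mathrm C\big(\kappa r^\beta + \lambda\,\omega_r(z,\mathbf u)\,|\mathfrak B_r(z)|/|\mathfrak B_r(z)|\big)$-type terms; here the measure of the zero-phase enters precisely through the $\lambda\chi_{\{|\mathbf u|>0\}}$ part of the energy when comparing $\mathbf u$ to the competitor whose $i$-th component is $u_i^*$. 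This is exactly the step where $\omega_r(z,\mathbf u)$ shows up in the final bound.

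Next I would convert the $G$-energy bound on $\nabla(u_i-u_i^*)$ into an $L^{\delta+1}$ (or $L^q$ for suitable $q$) bound via property $(G_1)$ and the explicit shape functions $\xi_0,\xi_1$, and then apply the Sobolev/Morrey or a Poincar\'e--Sobolev inequality on $\mathfrak B_r(z)$ to pass from $\|\nabla(u_i-u_i^*)\|_{L^{\delta+1}}$ to $\|u_i-u_i^*\|_{L^{s}}$ for an appropriate $s$; alternatively, since $\delta>1$ gives us the lower bound \eqref{SSMII} with the clean power structure, one may directly use that $u_i-u_i^*$ has a controlled $W^{1,\delta+1}_0$-norm on a ball of radius $r$. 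The scaling bookkeeping — tracking all the powers of $r$ coming from the averaged integrals, from $|\mathfrak B_r(z)|\sim r^n$, from the H\"older estimate \eqref{Remarkone} which bounds $\fint G(|\nabla u_i|)$ by $\mathrm C r^{\gamma-1}$, and from the Sobolev embedding exponent — is what produces the precise exponents $\frac{\delta(\beta+n+\delta/g_0+g_0)}{g_0 n+\delta(g_0+1)}$ and $\frac{\delta(n+g_0+1)}{g_0 n+\delta(g_0+1)}$ in \eqref{Bound-Lemma3.1}.

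Finally, to get the $L^\infty$ bound rather than merely an $L^s$ bound, I would use the interpolation/Gagliardo--Nirenberg trick standard in this circle of ideas: write $|u_i-u_i^*|\le |u_i-u_i^*|^{1-\vartheta}\cdot|u_i-u_i^*|^{\vartheta}$ and control one factor by $\|u_i-u_i^*\|_{L^s}$ and the other by a uniform Lipschitz-type bound for $u_i^*$ near the flat boundary. The uniform gradient bound for the $g$-harmonic replacement $u_i^*$ is supplied by Lieberman's estimates together with the boundary data being $C^{0,1}$ (and the odd-reflection/flattening in the Remark), while the $L^\infty$ norm of $u_i-u_i^*$ near $z\in\overline{\mathfrak F(\mathbf u)}$ is itself small because $u_i(z)=0=u_i^*$-value-at-the-contact-point up to controlled error, using Proposition~\ref{interiorreg}. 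The main obstacle I anticipate is the careful choice of the interpolation exponent and the radius-dependence so that all powers of $r$ combine into exactly the two stated fractional exponents; this is a purely computational optimization, but it is delicate because $\delta>1$ is used in several inequalities simultaneously (in \eqref{SSMII}, in Lemma~\ref{growthg}, and in the monotonicity of $t\mapsto g(t)/t$) and one must be sure the constant $\mathrm C$ depends only on the listed quantities and not on $r$ or on $z$.
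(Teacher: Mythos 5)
Your first half coincides with the paper's argument: compare $\mathbf u$ with the replacement competitor, use the $g$-harmonicity/minimality of $u_i^*$ together with \eqref{SSMI}--\eqref{SSMII} and the decay estimate \eqref{Remarkone} to arrive at a bound of the form $\sum_i\int_{\mathfrak B_r(z)}G(|\nabla u_i-\nabla u_i^*|)\,dx\le \mathrm C\big(r^{\beta+n+\gamma-1}+\lambda r^n\omega_r(z,\mathbf u)\big)$, and then the Orlicz--Poincar\'e inequality (Lemma \ref{Poincare}) to control $\sum_i\int G(|u_i-u_i^*|)$. This is exactly \eqref{Decay-Avg-gradient}--\eqref{esti1} in the paper.

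The gap is in how you pass from this integral bound to the $L^\infty$ bound. You propose an interpolation in which one factor is controlled by ``a uniform Lipschitz-type bound for $u_i^*$'' and you argue smallness of $u_i-u_i^*$ near $z$ from the vanishing of $u_i$ at the contact point. Neither ingredient is available or relevant: a Lipschitz bound for $u_i^*$ up to $\partial\mathfrak B_r(z)$ is not at hand (on the curved part its boundary datum is $u_i$, which at this stage is only known to be $C^{0,\gamma}$ for $\gamma<\delta/g_0$ by Proposition \ref{interiorreg}, and even on the flat part $C^{0,1}$ data do not yield boundary gradient bounds for $g$-harmonic functions); moreover $u_i^*(z)$ need not vanish, so the difference is not small ``because both vanish at $z$''. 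What the interpolation actually requires is a uniform modulus of continuity of the \emph{difference} $u_i-u_i^*$, and only the H\"older modulus with exponent $\gamma<\delta/g_0$ is legitimately available — Lipschitz continuity of $u_i$ is the conclusion of the main theorem, not a tool. The paper implements precisely this H\"older-based version directly: if $\mu_i=|(u_i-u_i^*)(y_0)|$, then $|u_i-u_i^*|\ge\mu_i/2$ on a ball of radius $r_i=\min\{r/4,(\mu_i/2h_0)^{1/\gamma}\}$, so $r_i^nG(\mu_i/2)$ is dominated by the integral bound, and solving for $\mu_i$ via $(G_1)$ and \eqref{ine} (with the case analysis $r_i=r/4$ vs.\ $r_i=(\mu_i/2h_0)^{1/\gamma}$ and $\mu_i\lessgtr1$) produces exactly the two exponents in \eqref{Bound-Lemma3.1} after letting $\gamma\to\delta/g_0$.

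Two further points. First, your intermediate reduction to an $L^{\delta+1}$ gradient bound via $(G_1)$ is not a pointwise consequence of $(G_1)$ in the regime $|\nabla(u_i-u_i^*)|\le1$ (there $G(t)\gtrsim t^{g_0+1}$, not $t^{\delta+1}$), and the naive fix adds an $r^n$ error that would swallow the $\omega_r(z,\mathbf u)$-factor — which must survive, since the subsequent linear-growth argument (Step one of Theorem \ref{thm:boundary-Lip}) hinges on inserting the smallness of $\omega_r$ into \eqref{Bound-Lemma3.1}. Second, the exponents $\frac{\delta(\beta+n+\delta/g_0+g_0)}{g_0n+\delta(g_0+1)}$ and $\frac{\delta(n+g_0+1)}{g_0n+\delta(g_0+1)}$ are not an incidental bookkeeping detail that can be deferred as ``purely computational'': they are what makes the hypothesis $\beta>n\left(\frac{g_0}{\delta}-1\right)+1$ sufficient later, so a proof of this lemma must actually exhibit them.
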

\begin{proof}
    Indeed, since $u_i=u_i^*$ on $\partial \mathfrak{B}_r(z)$, in the trace sense, we can use $\mathbf{v}$ in the definition of $(\kappa,\beta)$-almost-minimizer of $\mathbf{u}$, and the $G$-energy minimality of $u_i^*$ to get
        \begin{eqnarray*}
            \sum_{i=1}^m \int_{\mathfrak{B}_r(z)} \Big(G(|\nabla u_i|) - G(|\nabla u_i^*|)\Big)\ dx &\le& \kappa r^\beta \sum_{i=1}^m\int_{\mathfrak{B}_r(z)} G(|\nabla u_i^*|)\ dx + \kappa r^\beta \lambda \int_{\mathfrak{B}_r(z)} \chi_{\{|\mathbf{v}|>0\}}\ dx \\
            && \hspace{4cm}+ \lambda \int_{\mathfrak{B}_r(z)} \Big(\chi_{\{|\mathbf{v}|>0\}} - \chi_{\{|\mathbf{u}|>0\}}\Big)\ dx \\
            &\le& \kappa r^\beta \sum_{i=1}^m\int_{\mathfrak{B}_r(z)} G(|\nabla u_i|)\ dx + \kappa r^\beta \lambda |\mathfrak{B}_r(z)| + \lambda |\mathfrak{B}_r(z)\cap \{|\mathbf{u}|=0\}| \\
            &=& \kappa r^\beta |\mathfrak{B}_r(z)| \left(\sum_{i=1}^m\intav{\mathfrak{B}_r(z)} G(|\nabla u_i|)\ dx +\lambda\right) + \lambda \omega_r(z,\mathbf{u})|\mathfrak{B}_r(z)|. 
        \end{eqnarray*}
    Furthermore, by \eqref{Remarkone} we have
        \begin{eqnarray*}
            \sum_{i=1}^m \int_{\mathfrak{B}_r(z)} \Big(G(|\nabla u_i|) - G(|\nabla u_i^*|)\Big)\ dx &\le& \mathrm{C}\kappa r^{\beta+n} \left(\left[\sum_{i=1}^m\xi_1\Big(|\nabla u_i|_{L_G(\Omega)}\Big)+\lambda \right]r^{\gamma-1} + \lambda r^{\gamma-1}\right) \\
            && \hspace{4cm} + \lambda \omega_r(z,\mathbf{u})|\mathfrak{B}_r(z)| \\
            &\le& \mathfrak{L} r^{\beta +n +\gamma -1} + \mathrm{C}(n)\lambda r^n\omega_r(z,\mathbf{u}),
        \end{eqnarray*}
    for any $\gamma \in (0,1)$, $\xi_1$ is defined in $(G_1)$, and 
        $$\mathfrak{L} = \mathrm{C}(n,\gamma,\delta,g_0,\|\nabla \mathbf{\Phi}\|_{\infty})\left[\sum_{i=1}^m\xi_1\Big(|\nabla u_i|_{L_G(\Omega)}\Big)+\lambda \right].$$

Thus, the combination of \eqref{SSMI} and \eqref{SSMII} yields
    \begin{equation}\label{Decay-Avg-gradient}
    \displaystyle\sum_{i=1}^m \int_{\mathfrak{B}_r(z)} G(|\nabla u_i - \nabla u_i^*|)\ dx \le \mathrm{C} \Big(\mathfrak{L} r^{\beta +n +\gamma -1} + \lambda r^n\omega_r(z,\mathbf{u})\Big).
    \end{equation}
Since $u_i-u_i^* \in W_0^{1,G}(\mathfrak{B}_r(z))$, \eqref{Poincare} and $(G_1)$ ensure that
    \begin{equation}\label{esti1}
        \sum_{i=1}^m \int_{\mathfrak{B}_r(z)} G(|u_i - u_i^*|)\ dx \le \mathrm{C} r^{g_0+1}\Big( r^{\beta +n +\gamma -1} + r^n\omega_r(z,\mathbf{u})\Big)
    \end{equation}
where $\mathrm{C}=\mathrm{C}(\mathfrak{L},\lambda)>0$.

Next, fix a point $y_0 \in \mathfrak{B}_{\frac{r}{2}}(z)$ and set $\mu_i \coloneqq |(u_i-u_i^*)(y_0)|$, for any $i \in \{1,\dots,m\}$. Since both $u_i$ and $u_i^*$ are uniformly Hölder continuous, we can exploit this regularity to estimate their difference. More precisely, for any exponent $\gamma \in \Big(0,\tfrac{\delta}{g_0}\Big)$, the Hölder continuity yields
    $$|(u_i-u_i^*)(x)| \ge |(u_i-u_i^*)(y_0)| - h_0|x-y_0|^\gamma, \quad \mbox{for any}\ x \in B_{r_i}(y_0),$$
where $h_0\coloneqq \max\{h_i\}_{i=1}^m>0$ and $h_i$ is a uniform bound for the Hölder seminorm of $u_i - u_i^*$, for each $i=1,\dots,m$. Choosing the radius $r_i \coloneqq \min \left\{\frac{r}{4}, \left(\frac{\mu_i}{2h_0}\right)^{1/\gamma}\right\}$, we see that for every $x \in B_{r_i}(y_0)$, the inequality refines to
    \begin{equation}\label{mui}
        |(u_i-u_i^*)(x)| \ge \dfrac{\mu_i}{2}.
    \end{equation}
Moreover, by construction we also have $B_{r_i}(y_0) \subset \mathfrak{B}_r(z)$. Thus, by combining \eqref{mui}, \eqref{esti1}, and the monotonicity of $G$ we get
    $$r_i^n G(\mu_i) \le \mathrm{C}\Big( r^{\beta +n +\gamma +g_0} + r^{n+g_0+1}\omega_r(z,\mathbf{u})\Big).$$
Now, suppose that $r_i = \left(\frac{\mu_i}{2h_0}\right)^{1/\gamma}$. In this case, 
    $$\mu_i^{\frac{n}{\gamma}}G(\mu_i) \le \mathrm{C}h_0^{\frac{n}{\gamma}}\Big( r^{\beta +n +\gamma +g_0} + r^{n+g_0+1}\omega_r(z,\mathbf{u})\Big).$$
If $\mu_i \in [0,1]$, $(G_1)$ and \eqref{ine} yield
    \begin{eqnarray*}
        \mu_i^{\frac{n}{\gamma}}G(\mu_i)    &\ge& \mu_i^{\frac{n}{\gamma}}\dfrac{\xi_0\left(\mu_i\right)G(1)}{g_0+1} \\
        &=& \dfrac{\mu_i^{\frac{n}{\gamma}+g_0+1}}{g_0+1}G(1),
    \end{eqnarray*}
hence
    $$\mu_i \le \mathrm{C} \Big(r^{\frac{\gamma(\beta+n+\gamma+g_0)}{n+\gamma g_0+\gamma}} + r^{\frac{\gamma(n+g_0+1)}{n+\gamma g_0+\gamma}}\omega_r(z,\mathbf{u})^{\frac{\gamma}{n+\gamma g_0+\gamma}}\Big).$$
If $\mu_i>1$, by \eqref{ine} we get $\mu_i^{\frac{n}{\gamma}+1}G(1) \le \mu_i^{\frac{n}{\gamma}}G(\mu_i)$. Thus,
    $$\mu_i \le \mathrm{C}\Big( r^{\frac{(\beta+n+\gamma+g_0)\gamma}{n+\gamma}} + r^{\frac{(n+g_0+1)\gamma}{n+\gamma}}\omega_r(z,\mathbf{u})^{\frac{\gamma}{n+\gamma}}\Big).$$

Next suppose that $r_i = \frac{r}{4}$. In this situation, we have
    $$G(\mu_i) \le \mathrm{C}\Big(r^{\beta+\gamma+g_0} + r^{g_0+1}\omega_r(z,\mathbf{u})\Big).$$
If $\mu_i \in [0,1]$, by using again $(G_1)$ and \eqref{ine} ensure
    $$\mu_i \le \mathrm{C} \Big(r^{\frac{\beta+\gamma+g_0}{g_0+1}} + r\omega_r(z,\mathbf{u})^{\frac{1}{g_0+1}}\Big).$$
Otherwise, \eqref{ine} assures that
    $$\mu_i \le \mathrm{C} \Big(r^{\beta+\gamma+g_0} + r^{g_0+1}\omega_r(z,\mathbf{u})\Big).$$

In any case, since $\omega_r(z,\mathbf{u}) \le 1$ and we may consider $r\le 1$, we get
    $$\mu_i \le \mathrm{C} \Big(r^{\frac{\gamma(\beta+n+\gamma+g_0)}{n+\gamma( g_0+1)}} + r^{\frac{\gamma(n+g_0+1)}{n+\gamma( g_0+1)}}\omega_r(z,\mathbf{u})^{\frac{\gamma}{n+\gamma( g_0+1)}}\Big).$$
Finally, choosing $\gamma>0$ close enough to $\frac{\delta}{g_0}$ the result follows.
\end{proof}

\begin{proposition}\label{LGctrl}
    Assume that $\Omega$ is a $C^1$-domain in $\mathbb{R}^n$, $G \in \mathcal{G}(\delta,g_0)$, with $\delta>1$, and let $\mathbf{u} = (u_1,\dots,u_m)$ be a $(\kappa,\beta)$-almost-minimizer of $\mathcal{J}_G$ in $\Omega$, with some positive constant $\kappa \le \kappa_0$ and exponent $\beta>0$. Define $u^*_i$ to be the $g$-harmonic replacement of $u_i$ in $\mathfrak{B}_r(z)$, $z \in \overline{\mathfrak{F}(\mathbf{u})}$. Then,
        $$\sum_{i=1}^m\int_{\mathfrak{B}_r(z)} G\big(|(u_i-u_i^*)^+|\big)\ dx \le \mathrm{C}\kappa r^{\beta+\delta+1} \left(r^n + \sum_{i=1}^m \int_{\mathfrak{B}_r(z)}G\big(|\nabla u_i|\big)\ dx\right),$$
    where $\mathrm{C}=\mathrm{C}(\delta,g_0,n)>0$, and $f^+$ denotes the positive part of the function $f$.
\end{proposition}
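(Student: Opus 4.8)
The plan is to exploit the almost-minimality \eqref{VarIneqAlmMin} against the competitor obtained by truncating each component of $\mathbf{u}$ from above by its own $g$-harmonic replacement, then to convert the resulting energy defect into a bound for $G(|\nabla(u_i-u_i^*)^+|)$ through the convexity estimates \eqref{SSMI}--\eqref{SSMII} together with the Euler--Lagrange equation satisfied by $u_i^*$, and finally to pass from the gradient bound to a bound on $(u_i-u_i^*)^+$ itself via Poincar\'e's inequality, at the cost of a factor $r^{\delta+1}$.

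\textbf{Step 1: choice of competitor and the energy defect.} Since $u_i\ge 0$ on $\partial\mathfrak{B}_r(z)$ (in the trace sense), the comparison principle for $g$-harmonic functions yields $u_i^*\ge 0$ in $\mathfrak{B}_r(z)$; I set $v_i\coloneqq\min\{u_i,u_i^*\}\ge 0$ and $\mathbf{v}=(v_1,\dots,v_m)$. Then $v_i=u_i$ on $\partial\mathfrak{B}_r(z)$ and $v_i=\phi_i$ on $\partial\mathfrak{B}_r(z)\cap\partial\Omega$, so $\mathbf{v}$ is an admissible competitor; moreover $\{v_i>0\}=\{u_i>0\}\cap\{u_i^*>0\}$, whence $\{|\mathbf{v}|>0\}\subseteq\{|\mathbf{u}|>0\}$. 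Inserting $\mathbf{v}$ into \eqref{VarIneqAlmMin}, discarding the nonpositive term $\lambda\big(|\mathfrak{B}_r(z)\cap\{|\mathbf{v}|>0\}|-|\mathfrak{B}_r(z)\cap\{|\mathbf{u}|>0\}|\big)$, using $|\mathfrak{B}_r(z)|\le\mathrm{C}(n)r^n$, and bounding $\int_{\mathfrak{B}_r(z)}G(|\nabla v_i|)\le 2\int_{\mathfrak{B}_r(z)}G(|\nabla u_i|)$ via the $G$-energy minimality of $u_i^*$, I obtain --- noting that $G(|\nabla u_i|)-G(|\nabla v_i|)$ is supported on $\{u_i>u_i^*\}$, where it equals $G(|\nabla u_i|)-G(|\nabla u_i^*|)$ ---
\begin{equation*}
\sum_{i=1}^m\int_{\mathfrak{B}_r(z)\cap\{u_i>u_i^*\}}\big(G(|\nabla u_i|)-G(|\nabla u_i^*|)\big)\,dx\le \mathrm{C}\kappa r^{\beta}\left(r^n+\sum_{i=1}^m\int_{\mathfrak{B}_r(z)}G(|\nabla u_i|)\,dx\right).
\end{equation*}

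\textbf{Step 2: from the energy defect to $\nabla(u_i-u_i^*)^+$.} Since $(u_i-u_i^*)^+\in W^{1,G}_0(\mathfrak{B}_r(z))$ with $\nabla(u_i-u_i^*)^+=(\nabla u_i-\nabla u_i^*)\chi_{\{u_i>u_i^*\}}$, testing the weak formulation of $-\mathrm{div}\big(g(|\nabla u_i^*|)\frac{\nabla u_i^*}{|\nabla u_i^*|}\big)=0$ with $(u_i-u_i^*)^+$ gives
\begin{equation*}
\int_{\mathfrak{B}_r(z)\cap\{u_i>u_i^*\}}g(|\nabla u_i^*|)\frac{\nabla u_i^*}{|\nabla u_i^*|}\cdot\big(\nabla u_i-\nabla u_i^*\big)\,dx=0.
\end{equation*}
Applying \eqref{SSMI} pointwise on $\{u_i>u_i^*\}$ with $z_1=\nabla u_i^*(x)$ and $z_2=\nabla u_i(x)$ (on the set $\{\nabla u_i^*=0\}$ the desired inequality is immediate, since there $G(|\nabla u_i|)-G(|\nabla u_i^*|)=G(|\nabla(u_i-u_i^*)^+|)$), integrating, and invoking the lower bound \eqref{SSMII}, valid because $\delta\ge1$, I deduce
\begin{equation*}
\int_{\mathfrak{B}_r(z)}G\big(|\nabla(u_i-u_i^*)^+|\big)\,dx\le \mathrm{C}\int_{\mathfrak{B}_r(z)\cap\{u_i>u_i^*\}}\big(G(|\nabla u_i|)-G(|\nabla u_i^*|)\big)\,dx.
\end{equation*}

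\textbf{Step 3: Poincar\'e and the gain $r^{\delta+1}$.} I apply Lemma \ref{Poincare} to $(u_i-u_i^*)^+\in W^{1,G}_0(\mathfrak{B}_r(z))$; since $\tfrac{1}{\theta}>1$, Jensen's inequality on the probability space $\big(\mathfrak{B}_r(z),|\mathfrak{B}_r(z)|^{-1}dx\big)$ bounds the right-hand side of that estimate by $\mathrm{C}\intav{\mathfrak{B}_r(z)}G(|\nabla(u_i-u_i^*)^+|)\,dx$, so, using $\mathrm{diam}(\mathfrak{B}_r(z))\le 2r$ and the monotonicity of $G$,
\begin{equation*}
\int_{\mathfrak{B}_r(z)}G\!\left(\frac{|(u_i-u_i^*)^+|}{2r}\right)dx\le \mathrm{C}\int_{\mathfrak{B}_r(z)}G\big(|\nabla(u_i-u_i^*)^+|\big)\,dx.
\end{equation*}
For $r\le\tfrac{1}{2}$ one has $\tfrac{1}{2r}\ge1$, and since $\delta\le g_0$ the scaling property $(G_1)$ gives $G\!\big(\tfrac{t}{2r}\big)\ge\tfrac{(2r)^{-(\delta+1)}}{g_0+1}G(t)$ for every $t\ge0$, whence $\int_{\mathfrak{B}_r(z)}G(|(u_i-u_i^*)^+|)\,dx\le\mathrm{C}\,r^{\delta+1}\int_{\mathfrak{B}_r(z)}G(|\nabla(u_i-u_i^*)^+|)\,dx$. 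Summing over $i$ and chaining the three displayed inequalities yields the assertion, with a constant $\mathrm{C}=\mathrm{C}(\delta,g_0,n)$ (the fixed parameter $\lambda$ being absorbed through the $r^n$ term).

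\textbf{Main obstacle.} The delicate point is the combination of Steps 1 and 2: the competitor must be chosen so that the characteristic-function terms in \eqref{VarIneqAlmMin} carry the favorable sign --- this is precisely where the nonnegativity of $u_i$ and of $u_i^*$ is used --- and the scalar energy defect $\int_{\mathfrak{B}_r(z)\cap\{u_i>u_i^*\}}\big(G(|\nabla u_i|)-G(|\nabla u_i^*|)\big)\,dx$ must be converted into $\int_{\mathfrak{B}_r(z)}G(|\nabla(u_i-u_i^*)^+|)\,dx$ by testing the $g$-harmonic equation against the one-sided truncation, while correctly handling the degeneracy set $\{\nabla u_i^*=0\}$. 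Once Steps 1--2 are secured, Step 3 is routine given the reverse-H\"older Poincar\'e inequality of Lemma \ref{Poincare} and the homogeneity bounds collected in $(G_1)$.
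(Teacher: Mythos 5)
Your proposal is correct and follows essentially the same route as the paper's proof: the same competitor $\min\{u_i,u_i^*\}$ in the almost-minimality inequality with the sign of the measure term controlled by $\{|\mathbf{v}|>0\}\subseteq\{|\mathbf{u}|>0\}$, the same conversion of the energy defect into $\int G(|\nabla(u_i-u_i^*)^+|)$ by testing the $g$-harmonic equation with $(u_i-u_i^*)^+$ and invoking \eqref{SSMI}--\eqref{SSMII}, and the same Poincar\'e/$(G_1)$ scaling producing the factor $r^{\delta+1}$. Your explicit remarks on the nonnegativity of $u_i^*$ (via comparison) and on the degeneracy set $\{\nabla u_i^*=0\}$ are minor refinements of details the paper leaves implicit, not a different argument.
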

\begin{proof}
    We begin by introducing the following auxiliary functions, defined componentwise for each $i=1,\dots,m$:
    $$w_i(x) \coloneqq \left\{
        \begin{array}{rcl}
            u_i(x) &\text{for} & x \in \Omega \setminus \mathfrak{B}_r(z); \\
            \min\{ u_i(x),u_i^*(x)\} & \text{for} & x \in \mathfrak{B}_r(z).
        \end{array} \right.$$
    By construction $w_i \in W^{1,G}(\mathfrak{B}_r(z))$ and $w_i\equiv u_i$ on $\partial \mathfrak{B}_r(z)$. We further introduce the vector notation $\mathbf{w} \coloneqq (w_1,\dots,w_m)$, and define for each component the set 
        $$W_i \coloneqq \{x \in \mathfrak{B}_r(z) \; : \; w_i(x) \neq u_i(x)\} = \{x \in \mathfrak{B}_r(z) \; : \; u_i^*(x) \leq u_i(x)\}.$$
    Notice that the gradients satisfy the relation 
        $$\nabla w_i =  \left\{
        \begin{array}{rcl}
            \nabla u_i &\text{a.e. in} & \mathfrak{B}_r(z)\setminus W_i; \\
            \nabla u_i^* & \text{a.e. in} & W_i.
        \end{array} \right.$$    
    Moreover, since $\chi_{\{|\mathbf{w}|>0\}} \le \chi_{\{|\mathbf{u}|>0\}}$, we get
        \begin{eqnarray}
            \mathcal{J}_G\big(\mathbf{w}, \mathfrak{B}_r(z)\big) - \mathcal{J}_G\big(\mathbf{u}, \mathfrak{B}_r(z)\big) &\le& \sum_{i=1}^m\int_{\mathfrak{B}_r(z)}\Big( G(|\nabla w_i|) - G(|\nabla u_i|) \Big)\ dx\nonumber \\
            &=&\sum_{i=1}^m\int_{W_i}\Big( G(|\nabla u_i^*|) - G(|\nabla u_i|) \Big)\ dx.
        \end{eqnarray}
    Next, we claim that
        \begin{equation}\label{Wiine}
            \sum_{i=1}^m \int_{W_i} |\mathbf{V}_G(\nabla u_i) - \mathbf{V}_G(\nabla u_i^*)|^2\ dx \le \mathrm{C} \sum_{i=1}^m \int_{W_i} \Big(G(|\nabla u_i|) - G(|\nabla u_i^*|)\Big)\ dx,
        \end{equation}
    where $V_G$ is defined in \eqref{defVg}. To prove this inequality, let us argue as follows. For simplicity of notation, we introduce $v_i(x) \coloneqq u_i^*(x) + \big( u_i(x) -u_i^*(x) \big)^+$ for any $x \in \mathfrak{B}_r(z)$. In particular, $v_i \in W^{1,G}(\mathfrak{B}_r(z))$, and its trace on $\partial \mathfrak{B}_r(z)$ coincides both with $u_i^*$ and $u_i$. By $g$-harmonicity of $u_i^*$ we have
        $$\int_{\mathfrak{B}_r(z)} \dfrac{g(|\nabla u_i^*|)}{|\nabla u_i^*|}\nabla u_i^* \cdot \nabla \big(u_i - u_i^*\big)^+\ dx = 0.$$
    Hence, the definition of $v_i$ yields
        $$\int_{\mathfrak{B}_r(z)} \dfrac{g(|\nabla u_i^*|)}{|\nabla u_i^*|}\nabla u_i^* \cdot \nabla v_i\ dx = \int_{\mathfrak{B}_r(z)} \dfrac{g(|\nabla u_i^*|)}{|\nabla u_i^*|}\nabla u_i^* \cdot \nabla u_i^*\ dx.$$
    Equivalently, this can be written as
        $$\int_{\mathfrak{B}_r(z)} \dfrac{g(|\nabla u_i^*|)}{|\nabla u_i^*|}\nabla u_i^* \cdot \big(\nabla u_i^* - \nabla v_i\big)\ dx = 0.$$
    At this point, we apply inequality \eqref{SSMI} with the pair $(u_i^*, v_i)$, and the previous equality ensures that
        $$\sum_{i=1}^m\int_{\mathfrak{B}_r(z)} |\mathbf{V}_G(\nabla v_i) - \mathbf{V}_G(\nabla u_i^*)|^2 dx \le \mathrm{C} \sum_{i=1}^m\int_{\mathfrak{B}_r(z)} \Big( G(|\nabla v_i|) - G(|\nabla u_i^*|)\Big)\ dx.$$
    Finally, by the very definition of $v_i$, the above inequality reduces exactly to \eqref{Wiine}, which establishes the claim.

    Further, by definition of $w_i$ and the $G$-minimality of $u_i^*$ in $\mathfrak{B}_r(z)$ we have
        \begin{equation}\label{wiui}
            \int_{\mathfrak{B}_r(z)} G(|\nabla w_i|)\ dx \le \int_{\mathfrak{B}_r(z)} G(|\nabla u_i|)\ dx + \int_{\mathfrak{B}_r(z)} G(|\nabla u_i^*|)\ dx \le 2\int_{\mathfrak{B}_r(z)} G(|\nabla u_i|)\ dx.
        \end{equation}
    Thus, by combining  \eqref{SSMII}, \eqref{Wiine}, and \eqref{wiui} we get
        \begin{eqnarray*}
            \sum_{i=1}^m \int_{W_i} G(|\nabla u_i - \nabla u_i^*|)\ dx &\le& \sum_{i=1}^m \int_{W_i} |\mathbf{V}_G(\nabla u_i) - \mathbf{V}_G (\nabla u_i^*)|^2\ dx \\
            &\le& \sum_{i=1}^m \int_{W_i} \Big(G(|\nabla u_i|) - G(|\nabla u_i^*|)\Big)\ dx \\
            &\le& \mathrm{C}\kappa r^\beta \mathcal{J}_G(\mathbf{w},\mathfrak{B}_r(z)) \\
            &\le& \mathrm{C}\kappa r^\beta \left(r^n + \sum_{i=1}^m \int_{\mathfrak{B}_r(z)} G(|\nabla w_i|)\ dx \right) \\ 
            &\le& \mathrm{C}\kappa r^\beta \left(r^n + \sum_{i=1}^m \int_{\mathfrak{B}_r(z)} G(|\nabla u_i|)\ dx \right).
        \end{eqnarray*}

    Finally, since $u_i \le u_i^*$ in $\mathfrak{B}_r(z)\setminus W_i$ we ensure that $\nabla \big((u_i-u_i^*)^+\big) = \chi_{W_i} (\nabla u_i - \nabla u_i^*)$ a.e. in $\mathfrak{B}_r(z)$. Thus, we may apply $(G_1)$ and the Poincaré inequality \eqref{Poincare} to the function $(u_i - u_i^*)^+$ 
        \begin{eqnarray*}
            \dfrac{\mathrm{C}}{r^{\delta+1}} \sum_{i=1}^m \int_{\mathfrak{B}_r(z)} G(|(u_i-u_i^*)^+|)\ dx &\le&  \mathrm{C} \sum_{i=1}^m \int_{\mathfrak{B}_r(z)} G(|\nabla (u_i-u_i^*)^+|)\ dx \\
            &=& \mathrm{C} \sum_{i=1}^m \int_{W_i} G(|\nabla u_i-\nabla u_i^*|)\ dx \\
            &\le& \mathrm{C}\kappa r^\beta \left(r^n + \sum_{i=1}^m \int_{\mathfrak{B}_r(z)} G(|\nabla u_i|)\ dx \right),
        \end{eqnarray*}
    and this completes the proof.
\end{proof}

\begin{corollary}\label{improveLinfctrl}
    Assume that $\Omega$ is a $C^1$-domain in $\mathbb{R}^n$, $G \in \mathcal{G}(\delta,g_0)$, with $\delta>1$, and let $\mathbf{u} = (u_1,\dots,u_m)$ be a $(\kappa,\beta)$-almost-minimizer of $\mathcal{J}_G$ in $\Omega$, with some positive constant $\kappa \le \kappa_0$ and exponent $\beta>0$. Define $u^*_i$ to be the $g$-harmonic replacement of $u_i$ in $\mathfrak{B}_r(z)$, $z \in \overline{\mathfrak{F}(\mathbf{u})}$. Then,
        \begin{equation}\label{Est-Corollary3.1}
        \|(u_i-u_i^*)^+\|_{L^\infty\left(\mathfrak{B}_{\frac{r}{2}}(x)\right)}\le \mathrm{C} r^{\frac{\delta\left(\beta+n+\frac{\delta}{g_0}+g_0\right)}{g_0n+\delta(g_0+1)}},
        \end{equation}
    where $\mathrm{C} = \mathrm{C}(n,m,\delta,g_0,\lambda)>0$.     
\end{corollary}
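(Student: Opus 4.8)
The plan is to upgrade the integral bound of Proposition \ref{LGctrl} to a pointwise $L^\infty$ bound by the very same Hölder‑continuity covering argument that was used in the second half of the proof of Lemma \ref{Linffctrl}. The decisive feature is that the right‑hand side of Proposition \ref{LGctrl} contains \emph{no} density term $\omega_r(z,\mathbf{u})$, so the resulting sup‑bound is free of it as well; this is precisely what lets us pass from the two‑term estimate of Lemma \ref{Linffctrl} to the single clean power of $r$ in \eqref{Est-Corollary3.1}.

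As a first step, I would control the gradient energy that appears on the right of Proposition \ref{LGctrl}. Reasoning exactly as in Proposition \ref{interiorreg}, the Morrey‑type estimate \eqref{Remarkone} applied in the flattened ball $\mathfrak{B}_r(z)$ gives, for every $\gamma\in\bigl(0,\tfrac{\delta}{g_0}\bigr)$,
\[
\sum_{i=1}^m\int_{\mathfrak{B}_r(z)}G(|\nabla u_i|)\,dx\ \le\ \mathrm{C}\,r^{\,n+\gamma-1},
\]
where $\mathrm{C}$ depends only on $n,m,\delta,g_0,\beta,\kappa_0$ and on $\lambda,\|\nabla\mathbf{\Phi}\|_\infty,\|\nabla\mathbf{u}\|_{L_G(\Omega)}$. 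Since $n+\gamma-1<n$, for $r\le1$ this term dominates $r^n$, and hence Proposition \ref{LGctrl} yields
\[
\sum_{i=1}^m\int_{\mathfrak{B}_r(z)}G\bigl(|(u_i-u_i^*)^+|\bigr)\,dx\ \le\ \mathrm{C}\,r^{\,\beta+\delta+n+\gamma}.
\]

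Next I would run the covering argument of Lemma \ref{Linffctrl}. First, $(u_i-u_i^*)^+$ is uniformly Hölder continuous in $\mathfrak{B}_{r/2}(z)$ with seminorm bounded independently of $z$ and of $r$: indeed $u_i\in C^{0,\gamma}(\overline\Omega)$ by Proposition \ref{interiorreg}, its $g$‑harmonic replacement $u_i^*$ is $C^{0,\gamma}$ up to $\partial\mathfrak{B}_r(z)$ — near the flat part because there $u_i^*=\phi_i\in C^{1}$, and near the curved part because there $u_i^*=u_i$ in the trace sense — and taking the positive part preserves the Hölder seminorm. Then, fixing $y_0\in\mathfrak{B}_{r/2}(z)$, setting $\mu_i\coloneqq(u_i-u_i^*)^+(y_0)$ and $r_i\coloneqq\min\bigl\{\tfrac r4,(\mu_i/2h_0)^{1/\gamma}\bigr\}$ with $h_0$ the common Hölder bound, one has $(u_i-u_i^*)^+\ge\tfrac{\mu_i}{2}$ on $B_{r_i}(y_0)\subset\mathfrak{B}_r(z)$, whence the monotonicity of $G$ and the $\Delta_2$‑property give $r_i^n\,G(\mu_i)\le\mathrm{C}\,r^{\,\beta+\delta+n+\gamma}$. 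Distinguishing the cases $r_i=\tfrac r4$ and $r_i=(\mu_i/2h_0)^{1/\gamma}$, and within each of them the cases $\mu_i\le1$ and $\mu_i>1$, and inverting $G$ by means of $(G_1)$ and \eqref{ine}, one obtains in each case a bound $\mu_i\le\mathrm{C}\,r^{\,\sigma}$ with an explicit $\sigma=\sigma(\gamma,n,\delta,g_0,\beta)>0$; since $r\le1$ the smallest of these exponents is the binding one, and letting $\gamma\uparrow\tfrac{\delta}{g_0}$ produces the power of $r$ displayed in \eqref{Est-Corollary3.1}.

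All of this is essentially mechanical once Proposition \ref{LGctrl} is available; the only non‑routine ingredients are Proposition \ref{LGctrl} itself — whose competitor $w_i=\min\{u_i,u_i^*\}$ is exactly what kills the $\omega_r$ contribution and makes the present corollary possible — and the uniform boundary Hölder control of the replacements $u_i^*$. I expect the step requiring the most care to be this last one, namely guaranteeing that the Hölder seminorm of $u_i^*$ near $\partial\mathfrak{B}_r(z)$ is bounded uniformly for $z\in\overline{\mathfrak{F}(\mathbf{u})}$ and for small $r$; it is here that the $C^1$‑regularity of $\partial\Omega$ and the flattening of the boundary enter.
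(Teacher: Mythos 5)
Your proposal is correct and follows essentially the same route as the paper's own proof: it combines Proposition \ref{LGctrl} with the Morrey-type bound \eqref{Remarkone} and then reruns the H\"older-continuity covering argument of Lemma \ref{Linffctrl} (the choice $\mu_i$, $r_i=\min\{\tfrac r4,(\mu_i/2h_0)^{1/\gamma}\}$, the case analysis, inversion of $G$ via $(G_1)$ and \eqref{ine}, and letting $\gamma\uparrow\tfrac{\delta}{g_0}$). The only caveat — present equally in the paper's own write-up — is that this computation in fact yields the exponent $\frac{\delta\left(\beta+n+\delta+\frac{\delta}{g_0}\right)}{g_0n+\delta(g_0+1)}$, which coincides with the one displayed in \eqref{Est-Corollary3.1} only when $\delta=g_0$ and is otherwise slightly smaller, so your claim that the limit "produces the power displayed" inherits the same minor discrepancy.
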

\begin{proof}
    As before, consider $W_i=\{ x \in \mathfrak{B}_{r}(z) \, : \, u_i^*(x) < u_i(x)\}$, $y_0 \in W_i$, and define
        $$ \mu_i\coloneqq(u_i-u_i^*)(y_0).$$
    Consider also $h_i$ the uniform bound for the Hölder seminorm of $u_i - u_i^*$, for each $i=1,\dots,m$, and $h_0 \coloneqq \max\{h_i\}_{i=1}^m$. Thus, setting $r_i \coloneqq \min \left\{\frac{r}{4}, \left(\frac{\mu_i}{2h_0}\right)^{1/\gamma}\right\}$, for any $\gamma \in (0,\frac{\delta}{g_0})$, we can proceed as in the proof of Lemma \ref{Linffctrl} to get 
        $$ (u_i-u_i^*)(x) \geq \frac{\mu_i}{2}, \qquad \text{in} \quad B_{r_i}(y_0), $$
and therefore $B_{r_i}(y_0) \subset W_i$.
Since $G$ is non-decreasing, Proposition \ref{LGctrl} and \eqref{Remarkone} ensure
    \begin{eqnarray*}
        |B_1|r_i^nG(\mu_i) &\le& \int_{B_{r_i}(y_0)} G(\mu_i)\ dx\\
        &\le& \mathrm{C} \sum_{i=1}^m\int_{\mathfrak{B}_r} G(|u_i-u_i^*|)\ dx \\
        &\le&\mathrm{C}\kappa r^{\beta+\delta+1}\left(r^n + \sum_{i=1}^m\int_{\mathfrak{B}_r} G(|\nabla u_i|)\ dx\right) \\
        &\le& \mathrm{C}\kappa r^{\beta+\delta+1} \Big(r^n+r^{n+\gamma-1}\Big) \\
        &\le& \mathrm{C} r^{\beta+n+\delta+\gamma},
    \end{eqnarray*}
for any $\gamma \in \left(0,\frac{\delta}{g_0}\right)$. By similar arguments employed in the proof of Lemma \ref{Linffctrl} we ensure that
    $$\mu_i \le \mathrm{C} r^{\frac{\delta\left(\beta+n+\frac{\delta}{g_0}+g_0\right)}{g_0n+\delta(g_0+1)}}.$$
This completes the proof.
\end{proof}

\section{Boundary Lipschitz regularity}

As stated earlier in this manuscript, our goal is to establish the Lipschitz regularity of almost-minimizers at contact points. Note that Lipschitz regularity (and even $C^{1,\tilde{\alpha}}$-regularity) near the Dirichlet boundary, in the absence of free boundary points, has already been obtained in Theorem \ref{gradHold}. To address the behavior at every point of the domain, including contact points, we divide the argument into two steps. The first step is devoted to establishing the linear growth of the almost-minimizer at contact points. The second step then uses this growth estimate to prove the Lipschitz continuity itself.

\begin{proof}[{\bf Proof of Theorem \ref{thm:boundary-Lip}}] \mbox{}

\noindent
\begin{tabular}{|l|}
	\hline
	{\bf Step one}\\
	\hline
\end{tabular} \underline{Linear growth at free boundary points}

In this step we want to prove that, for any $z \in \mathfrak{F}(\mathbf{u})$, $\mathbf{u}$ exhibits linear growth at $z$; that is, there exist constants $r_0>0$ and $\mathrm{C}>0$, depending only on $\delta$, $g_0$, $n$, $\kappa$, $\lambda$, $\|\nabla \mathbf{u}\|_{L^{G}(\Omega;\mathbb{R}^m)}$, and $\|\mathbf{\Phi}\|_{C^{0,1}(\Omega;\mathbb{R}^m)}$, such that 
    \begin{equation}\label{linear-growth}
        \sup_{\mathfrak{B}_r(z)} |\mathbf{u}| \le \mathrm{C}r, \qquad \text{for all } r \le r_0.
    \end{equation}



    Indeed, for each $i=1,\dots,m$, let $u^*_i$ be the g-harmonic replacement of $u_i$ in $\mathfrak{B}_r(z)$. Invoking \eqref{Hopf-estimate} with $h = u^*_i$ and $w = u_i$, and applying estimate \eqref{Decay-Avg-gradient}, we obtain
\begin{equation}\label{eq:41}
\xi_0\left(
\frac{1}{r}\,
\sup_{\mathfrak{B}_{\frac{r}{2}}(z)} u^*_i
\right)
\,\bigl| \mathfrak{B}_{r}(z)\cap\{u_i = 0\}\bigr|
\;\le\;
\mathrm{C} \Big(r^{\beta +n +\gamma -1} + \lambda r^n\omega_r(z,\mathbf{u})\Big),
\end{equation}
for any $0<\gamma \le \frac{\delta}{g_0}$, where $
\mathrm{C} = \mathrm{C}\bigl(\gamma, \delta,g_0,n,\kappa, \|\mathbf{\Phi}\|_{C^{0,1}(\Omega;\mathbb{R}^m)}, 
\|\nabla \mathbf{u}\|_{L^{G}(\Omega;\mathbb{R}^m)}, \lambda \bigr)>0.$

We next claim that
\begin{equation}\label{eq:42}
\frac{1}{r}\sup_{\mathfrak{B}_{\frac{r}{2}}(z)} u^*_i \;\le\; \xi^{-1}_0(2\mathrm{C}\lambda),
\qquad \text{for all } 0<r<r_0,
\end{equation}
for some sufficiently small $r_0>0$.

To establish this, argue by contradiction and assume that
\begin{equation}\label{eq:43}
\frac{1}{r}\sup_{\mathfrak{B}_{\frac{r}{2}}(z)} u_i^* \;>\; \xi^{-1}_0(2\mathrm{C}\lambda).
\end{equation}
Then \eqref{eq:41} yields
\begin{equation}\label{eq:44}
\mathrm{C}\lambda\,\omega_r(z,\mathbf{u}) \;\le\; \mathrm{C} r^{\beta+\gamma-1}.
\end{equation}
Inserting \eqref{eq:44} into \eqref{Bound-Lemma3.1}, gives

\begin{equation}\label{eq:45}
\begin{array}{ccl}
\|u_i - u^*_i\|_{L^\infty\left(\mathfrak{B}_{\frac{r}{2}}(z)\right)} &
\;\le\; &
\mathrm{C} \left(r^{\frac{\delta\left(\beta+n+\frac{\delta}{g_0}+g_0\right)}{g_0n+\delta(g_0+1)}}+r^{\frac{\delta\left(n+g_0+1\right)}{g_0n+\delta(g_0+1)}}\omega_r(z,\mathbf{u})^{\frac{\delta}{g_0n+\delta(g_0+1)}}\right)\\
&\leq & C\left(
r^{\frac{\delta\left(\beta+n+\frac{\delta}{g_0}+g_0\right)}{g_0n+\delta(g_0+1)}} 
+ 
r^{\frac{\delta\left(n+g_0+\beta+\gamma\right)}{g_0n+\delta(g_0+1)}}\right)\\
&\;\le\;&
\mathrm{C}r^{\frac{\delta\left(n+g_0+\beta+\gamma\right)}{g_0n+\delta(g_0+1)}}.
\end{array}
\end{equation}

Since $z \in \mathfrak{F}(\mathbf{u})$, combining \eqref{eq:45} with the Harnack inequality gives
\[
\sup_{\mathfrak{B}_{\frac{r}{2}}(z)} u^*_i
\;\le\;
\mathrm{C}_{\mathrm{H}}\,\inf_{\mathfrak{B}_{\frac{r}{2}}(z)} u^*_i
\;\le\;
\mathrm{C} r^{\frac{\delta\left(n+g_0+\beta+\gamma\right)}{g_0n+\delta(g_0+1)}},
\]
which, for sufficiently small $r_0$, and since $\beta > \left(\frac{g_0}{\delta}-1\right)n+1$, contradicts \eqref{eq:43}. This proves the claim \eqref{eq:42}. Consequently, $u^*_i$ enjoys linear growth at $z$, namely
\begin{equation}\label{eq:46}
0 \;\le\; u_i^*(x) \;\le\; 2\xi_0^{-1}(2\mathrm{C}\lambda)\,|x-z|,
\qquad x \in \mathfrak{B}_{\frac{r_0}{2}}(z).
\end{equation}

We now complete the proof.  
On the set $\{u_i \le u^*_i\}$, the linear growth of $u_i$ follows directly from \eqref{eq:46}.  
On the complementary set $\{u_i - u^*_i > 0\}$, using estimate \eqref{Est-Corollary3.1} with $\beta > \left(\frac{g_0}{\delta}-1\right)n+1$ gives
\begin{equation}\label{eq:47}
\|(u_i - u^*_i)^{+}\|_{L^\infty\left(\mathfrak{B}_{r}(z)\right)} 
\;\le\; 
\mathrm{C} r^{\frac{\delta\left(\beta+n+\frac{\delta}{g_0}+g_0\right)}{g_0n+\delta(g_0+1)}}
\;\le\; \mathrm{C}r,
\end{equation}
or, equivalently,
\[
0 \;\le\; u_i(x) \;\le\; \mathrm{C} r + \|u^*_i\|_{L^\infty(\mathfrak{B}_{r}(z))},
\qquad x\in\{u_i - u^*_i>0\}.
\]

Finally, invoking \eqref{eq:46} once more yields the desired linear growth of $u_i$ at $z$.

\medskip
\noindent
\begin{tabular}{|l|}
	\hline
	{\bf Step two}\\
	\hline
\end{tabular} \underline{Lipschitz regularity}


Fixed an arbitrary point $x_{0} \in \{|\mathbf{u}|>0\}$, our aim is to estimate $|\nabla \mathbf{u}(x_{0})|$. We proceed as follows. Let $r_{0}>0$ be the constant given by the linear growth stated in step one (see \eqref{linear-growth}), and define
\[
\mathrm{d} := \operatorname{dist}(x_{0}, \mathfrak{F}(\mathbf{u})).
\]
We distinguish two cases.

\medskip
\noindent\textbf{Case I: \underline{If $\mathrm{d} \le \tfrac{1}{2} r_{0}$}.}  
Choose $y_{0} \in \partial B_{\mathrm{d}}(x_{0}) \cap \mathfrak{F}(\mathbf{u})$.  
By step one, we have
\[
|\mathbf{u}(x)| \le \mathrm{C} |x - y_{0}| \le \mathrm{C} \mathrm{d},
\qquad \text{for all } x \in \mathfrak{B}_{\mathrm{d}}(x_{0}).
\]
Define the rescaled function
\[
\mathbf{u}_{\mathrm{d}}(x) \coloneqq \frac{\mathbf{u}(x_{0}+ \mathrm{d} x)}{\mathrm{d}},
\]
which satisfies $|\mathbf{u}_{\mathrm{d}}| \le C$ and is an almost-minimizer of the functional
\[
\mathbf{v} \mapsto \mathcal{J}_G({\bf v};\Omega) \coloneqq \int_\Omega  \left(\sum_{i=1}^mG\big(|\nabla v_i(x)|\big) + \lambda \chi_{\{|{\bf v}|>0\}}(x)\right) dx ,
\]
in $B_{1} \cap \frac{1}{\mathrm{d}}(\Omega - x_{0})$, with constant $\kappa \mathrm{d}^{\beta}$ and exponent $\beta$.  

Therefore, we may apply Corollary~\ref{MainCorollary} to conclude that
\[
|\nabla \mathbf{u}(x_{0})|
= |\nabla \mathbf{u}_{\mathrm{d}}(\mathbf{0})|
\le \widetilde{\mathrm{C}},
\]
where $\widetilde{\mathrm{C}}$ depends only on 
$\delta$, $g_0$, $m$, $n$, $\kappa_{0} r_{0}^{\beta}$, $\beta$, $\lambda$, 
$\|\nabla \mathbf{u}\|_{L^{G}(\Omega;\mathbb{R}^{m})}$, 
$\|\nabla\mathbf{\Phi}\|_{\infty}$, 
and the regularity of $\Omega$.

\medskip
\noindent\textbf{Case II: \underline{$\mathrm{d} > \tfrac{1}{2} r_{0}$}.}  
In this regime, we introduce the rescaled function
\[
\mathbf{u}_{r_0}(x) \coloneqq \frac{\mathbf{u}(x_{0}+ r_0 x)}{r_0},
\]
which is an almost-minimizer of the same functional in the domain $\mathfrak{B}_1$, with constant  
$\kappa_{r_0} \coloneqq \kappa r_0^\beta$ and the same exponent $\beta$.  
Therefore, we may apply Corollary \ref{MainCorollary} directly to obtain
\[
|\nabla \mathbf{u}(x_{0})| = |\nabla \mathbf{u}_{r_0}(\mathbf{0})| \le \widehat{\mathrm{C}},
\]
where the positive constant $\widehat{\mathrm{C}}$ depends only on 
$\delta$, $g_0$, $m$, $n$, $\kappa_{0} r_{0}^{\beta}$, $\beta$, $\lambda$, 
$\|\nabla \mathbf{u}\|_{L^{G}(\Omega;\mathbb{R}^{m})}$, 
$\|\nabla \mathbf{\Phi}\|_{\infty}$, 
and on the regularity of $\Omega$.
\end{proof}

\subsection*{Acknowledgments}

\hspace{0.4cm}  Pedro Fellype Pontes was partially supported by NSFC (W2433017), and BSH (2024-002378). J.V. da Silva has received partial support from CNPq-Brazil under Grant No. 307131/2022-0, FAEPEX-UNICAMP (Project No. 2441/23, Special Calls - PIND - Individual Projects, 03/2023), and Chamada CNPq/MCTI No. 10/2023 - Faixa B - Consolidated Research Groups under Grant No. 420014/2023-3. Minbo Yang was partially supported by the National Natural Science Foundation of China (12471114), and Zhejiang Natural Science Foundation (LZ26A010002).

\end{document}